\newtheorem{rk}{Remark}
\newtheorem{ass}{Assumption}
\newtheorem{theo}{Theorem}
\newtheorem{lem}{Lemma}
\newtheorem{definition}{Definition}
\newtheorem{cor}{Corollary}
\newcommand{\Bcal}{\mathcal{B}}
\newcommand{\Fcal}{\mathcal{F}}
\newcommand{\Ncal}{\mathcal{N}}
\newcommand{\Scal}{\mathcal{S}}
\newcommand{\Zcal}{\mathcal{Z}}
\newcommand{\Rset}{\mathbb{R}}
\newcommand{\Nset}{\mathbb{N}}
\newcommand{\Trans}{\scriptscriptstyle\top}
\newcommand{\argm}{\operatorname{argmin}}
\newcommand{\prox}{\operatorname{prox}}
\newcommand{\ri}{\operatorname{ri}}
\DeclareMathOperator*{\minimise}{minimise}
\DeclareMathOperator*{\argmin}{argmin}
\DeclareMathOperator*{\infim}{inf}
\title{A Parametric Non-Convex Decomposition Algorithm for Real-Time and Distributed NMPC}           
\author{Jean-Hubert Hours,~\IEEEmembership{Student Member,~IEEE} and Colin N. Jones,~\IEEEmembership{Senior Member,~IEEE}}
\begin{document}

\maketitle

\begin{abstract}
A novel decomposition scheme to solve parametric non-convex programs as they arise in\ \textit{Nonlinear Model Predictive Control} (NMPC) is presented.~It consists of a fixed number of alternating proximal gradient steps and a dual update per time step. Hence, the proposed approach is attractive in a real-time distributed context.~Assuming that the\ \textit{Nonlinear Program} (NLP) is semi-algebraic and that its critical points are strongly regular, contraction of the sequence of primal-dual iterates is proven, implying stability of the sub-optimality error, under some mild assumptions.~Moreover, it is shown that the performance of the optimality-tracking scheme can be enhanced via a continuation technique.~The efficacy of the proposed decomposition method is demonstrated by solving a centralised NMPC problem to control a DC motor and a distributed NMPC program for collaborative tracking of unicycles, both within a real-time framework.~Furthermore, an analysis of the sub-optimality error as a function of the sampling period is proposed given a fixed computational power.    
\end{abstract}
\begin{IEEEkeywords}
Augmented Lagrangian, Alternating minimisation, Proximal gradient, Strong regularity, Kurdyka-Lojasiewicz property, Nonlinear Model Predictive Control.
\end{IEEEkeywords}
\let\thefootnote\relax\footnotetext{Jean-Hubert Hours and Colin N. Jones are with the Laboratoire d'Automatique, Ecole Polytechnique F\'ed\'erale de Lausanne (EPFL), CH-$1015$ Lausanne (e-mail:\href{mailto:jean-hubert.hours@epfl.ch}{\nolinkurl{jean-hubert.hours@epfl.ch}},\href{mailto:colin.jones@epfl.ch}{\nolinkurl{colin.jones@epfl.ch}}).\\
The work of Jean-Hubert Hours is supported by the Swiss National Science Foundation (SNSF) under grant $200021$-$135218$. The work of Colin N. Jones is supported by the European Research Council under the European Union's Seventh Framework Programme (FP/$2007$-$2013$), ERC Grant Agreement $307608$.}
\section{Introduction}
\label{sec:intro}

\looseness-1The applicability of NMPC to fast and complex dynamics is hampered by the fact that an NLP, which is generally non-convex, is to be solved at every sampling time. Solving an NLP to full accuracy is not tractable when the system's sampling frequency is high, which is the case for many mechanical or electrical systems.~This difficulty is enhanced when dealing with distributed systems, which consist of several sub-systems coupled through their dynamics, objectives or constraints.~This class of systems typically lead to large-scale NLPs, which are to be solved online as the system evolves. Solving such programs in a centralised manner may be computationally too demanding and may also hamper autonomy of the agents.~Therefore, much research effort is currently brought to develop decentralised computational methods applicable to MPC.~Although several distributed linear MPC algorithms are now available, there only exists very few strategies\ \cite{neco2009,quoc2013} that can address online distributed NMPC programs, as they generally result in non-convex problems.~Most of these techniques essentially consist in fully convexifying the problem at hand and resort to\ \textit{Sequential Convex Programming} (SCP).~Hence, from a theoretical point of view, they cannot be fully considered as distributed non-convex techniques, since the decomposition step occurs at the convex level.~In addition, the SCP methods of\ \cite{neco2009,quoc2013} have not been analysed in an online optimisation framework, where fixing the number of iterations is critical.~In general, it should be expected that SCP techniques require a high number communications between agents, since a sequence of distributed convex NLPs is to be solved.~Therefore, our main objective is to propose a novel online distributed optimisation strategy for solving NMPC problems in a real-time framework.~Our approach is useful for deployment of NMPC controllers, but can also be applied in the very broad parametric optimisation context.~We address the problem of real-time distributed NMPC from a distributed optimisation perspective.~However, some approaches based on distributed NMPC algorithms, which fully decompose the NMPC problem into smaller NMPC problems exist in the literature\ \cite{dun2007,stew2011}.~Yet, to the author's knowledge, they have not been analysed in a real-time setting.~Besides, the approach of\ \cite{stew2011} is limited to the input constrained case. 

\looseness-1A standard way of solving distributed NLPs is to apply Lagrangian decomposition\ \cite{boyd2010}.~This technique requires strong duality, which is guaranteed by Slater's condition in the convex case, but rarely holds in a non-convex setting.~The augmented Lagrangian framework\ \cite{bert1982} mitigates this issue by closing the duality gap for properly chosen penalty parameters\ \cite{rock1974}.~The combination of the bilinear Lagrangian term with the quadratic penalty also turns out to be computationally more efficient than standard penalty approaches, as the risk of running into ill-conditioning is reduced by the fast convergence of the dual sequence.~In addition, global convergence of the dual iterates can be obtained, as in the\ \textsc{lancelot} package\ \cite{conn1988,conn1991}.~However, the quadratic penalty term induces non-separability in the objective, which hampers decomposing the NLP completely.~Several approaches have been proposed to remedy this issue\ \cite{ham2011}.~Taking inspiration from the\ \textit{Alternating Direction Method of Multipliers} (ADMM)\ \cite{boyd2010}, which has recently gained in popularity, even in a non-convex setting (yet without convergence guarantees)\ \cite{jov2013}, we propose addressing the non-separability issue via a novel\ \textit{Block-Coordinate Descent} (BCD) type technique.~BCD or alternating minimisation strategies are known to lead to `easily' solvable sub-problems, which can be parallelised under some assumptions on the coupling, and are well-suited to distributed computing platforms\ \cite{bert1997}.~BCD-type techniques are currently raising interest for solving very large-scale programs\ \cite{nes2012}.~Until very recently, for non-convex objectives with certain structure, convergence of the BCD iterates was proven in terms of limit points only\ \cite{tse2001}.~The central idea of our algorithm is to apply a truncated proximal alternating linearised minimisation in order to solve the primal augmented Lagrangian problem approximately.~Our convergence analysis is based on the recent results of\ \cite{att2009,att2013}, which provide a very general framework for proving global convergence of descent methods on non-smooth semi-algebraic objectives. 

\looseness-1Augmented Lagrangian techniques have proven effective at solving large-scale NLPs \cite{zav2014}.~Yet, in an online context, they are hampered by the fact that a sequence of non-convex programs need to be solved to an increasing level of accuracy\ \cite{bert1982}.~Recently, a parametric augmented Lagrangian algorithm has been introduced by\ \cite{zav2010} for centralised NMPC problems.~ Our online decomposition method builds upon the ideas of\ \cite{zav2010}, but extends them to the online distributed framework, which requires a significantly different approach.~Furthermore, our analysis brings the results of \cite{zav2010} one step further by proving contraction of the sequence of primal-dual iterates under some mild assumptions.~In particular, new insights are given on how the penalty parameter and the number of primal iterations need to be tuned in order to ensure boundedness of the tracking error, which is the core of the analysis of a parametric optimisation algorithm.~Moreover, an interesting aspect of our analysis is that it shows how the proposed decomposition algorithm can be efficiently modified via a continuation, or homotopy, strategy\ \cite{geo1990}, leading to faster convergence of the tracking scheme.~This theoretical observation is confirmed by a numerical example presented later in the paper.~Our technique is also designed to handle a more general class of parametric NLPs, where the primal\ \textit{Projected Successive Over-Relaxation} (PSOR) of \cite{zav2010} is limited to quadratic objectives subject to non-negativity constraints.    

\looseness-1From an MPC perspective, the effect of the sampling period on the behaviour of the combined system-optimiser dynamics has been analysed in a few studies\ \cite{alam2013,alam2014}.~However, it is often claimed that faster sampling rates lead to better closed-loop performance.~This thinking neglects the fact that MPC is an optimisation-based control strategy, which requires a significant amount of computations within a sampling interval.~Hence, sampling faster entails reducing the number of iterations in the optimiser quite significantly, as the computational power of any computing platform is limited, especially in the case of embedded or distributed applications.~In the last part of the paper, we propose an analysis of the effect of the sampling period on the behaviour of the system in closed-loop with our parametric decomposition strategy.~It is demonstrated that the proposed theory accounts for the observed numerical behaviour quite nicely.~In particular, tuning the penalty parameter of the augmented Lagrangian turns out to significantly improve the tracking performance at a given sampling period.    
 
\looseness-1In Section\ \ref{sec:algo_desc}, the parametric distributed optimisation scheme is presented.~In Section\ \ref{sec:theo_tools}, some key theoretical ingredients such as strong regularity of generalised equations and the Kurdyka-Lojasiewicz inequality are introduced, and convergence of the primal sequence is proven.~As explained later in the paper, strong regularity and the Kurdyka-Lojasiewicz inequality encompass a broad class of problems encountered in practice.~Then, in Section\ \ref{sec:con_prop}, contraction of the primal-dual sequence is proven and conditions ensuring stability of the tracking error are derived.~In Section\ \ref{sec:comp_asp}, basic computational aspects are investigated.~Finally, the proposed approach is tested on two numerical examples, which consist in controlling the speed of a DC motor to track a piecewise constant reference for the first one, and collaborative tracking of unicycles for the second one.~The effect of the sampling period on the tracking error, passed through the system dynamics, is analysed given a fixed computational power, or communication rate for the distributed case.
\section{Background}
\label{sec:back}
\begin{definition}[Proximal operator]
\label{def:prox}
Let $f:\Rset^n\rightarrow\Rset\cup\left\{+\infty\right\}$ be a proper lower-semicontinuous function and $\alpha>0$.~The proximal operator of $f$ with coefficient $\alpha$, denoted by $\prox^f_\alpha\left(\cdot\right)$, is defined as follows:
\begin{align}
\label{eq:prox}
\prox^f_\alpha\left(x\right):=\argm_{y} f(y)+\displaystyle\frac{\alpha}{2}\left\|y-x\right\|_2^2\enspace.
\end{align} 
\end{definition}
\begin{definition}[Critical point]
\label{def:crit}
Let $f$ be a proper lower semicontinuous function.~A necessary condition for $x^\ast$ to be a minimiser of $f$ is that
\begin{align}
\label{eq:crit}
0\in\partial f\left(x^\ast\right)\enspace,
\end{align}
where $\partial f\left(x^\ast\right)$ is the sub-differential of $f$ at $x^\ast$\ \cite{rock2009}. 
\end{definition} 
Points satisfying \eqref{eq:crit} are called\ \textit{critical points}. 
\begin{definition}[Normal cone to a convex set]
Let $\Omega$ be a convex set in $\Rset^n$ and $\bar{x}\in\Omega$.~The \textit{normal cone} to $\Omega$ at $\bar{x}$ is the set
\begin{align}
\label{eq:norm_cone}
\Ncal_{\Omega}\left(\bar{x}\right):=\left\{v\in\Rset^n~\big|~\forall x\in\Omega,v^{\Trans}\left(x-\bar{x}\right)\leq0\right\}\enspace.
\end{align} 
\end{definition}
The indicator function of a closed subset $\Omega$ of $\Rset^n$ is denoted by $\iota_\Omega$ and is defined as 
\begin{align}
\iota_{\Omega}\left(x\right)=\begin{cases}0&\mbox{if } x\in\Omega\\
+\infty&\mbox{if }x\notin\Omega\end{cases}\enspace. 
\end{align}
\begin{lem}[Sub-differential of indicator function \cite{rock2009}]
Given a convex set $\Omega$, for all $x\in\Omega$,
\begin{align}
\partial\iota_\Omega\left(x\right)=\Ncal_\Omega\left(x\right)\enspace.
\end{align}
\end{lem}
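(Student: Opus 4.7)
My plan is to reduce the statement to a direct application of the convex sub-differential definition. First I would note that since $\Omega$ is convex, the indicator function $\iota_\Omega$ is proper, convex, and lower-semicontinuous, hence the general sub-differential from Definition~\ref{def:crit} coincides with the classical convex sub-differential (a standard identification I would cite from \cite{rock2009}). This bypasses any delicate non-smooth calculus and reduces the lemma to an exercise in the convex sub-gradient inequality $v\in\partial\iota_\Omega\left(x\right)\iff\iota_\Omega\left(y\right)\geq\iota_\Omega\left(x\right)+v^{\Trans}\left(y-x\right)$ for all $y\in\Rset^n$.

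Next I would use $\iota_\Omega\left(x\right)=0$ (since $x\in\Omega$) and split on the membership of $y$. The case $y\notin\Omega$ is vacuous, because $\iota_\Omega\left(y\right)=+\infty$ makes the inequality automatic and imposes no constraint on $v$. The case $y\in\Omega$ gives $\iota_\Omega\left(y\right)=0$, so the sub-gradient inequality collapses to $v^{\Trans}\left(y-x\right)\leq 0$ for every $y\in\Omega$, which is precisely the defining condition \eqref{eq:norm_cone} of $\Ncal_\Omega\left(x\right)$. Both inclusions $\partial\iota_\Omega\left(x\right)\subseteq\Ncal_\Omega\left(x\right)$ and $\Ncal_\Omega\left(x\right)\subseteq\partial\iota_\Omega\left(x\right)$ then follow simultaneously from this equivalence.

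I do not anticipate any genuine obstacle: the only external fact used is the coincidence of the two notions of sub-differential for convex functions, which is classical. The remaining content is definition-chasing and a two-case dichotomy that is effectively forced by the $\left\{0,+\infty\right\}$-valued nature of $\iota_\Omega$.
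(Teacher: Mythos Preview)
Your argument is correct and is the standard one-line reduction of the claim to the convex sub-gradient inequality. Note, however, that the paper does not supply its own proof of this lemma: it is stated in the Background section as a known fact with a citation to \cite{rock2009}, so there is no in-paper argument to compare against. Your proposal simply fills in what the paper leaves to the reference, and does so in the expected way.
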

\begin{lem}[Descent lemma,\cite{bert1982}]
\label{lem:desc_lem}
Let $L:\Rset^n\rightarrow\Rset$ a continuously differentiable function such that its gradient ${\nabla}L$ is $\lambda_L$-Lipschitz continuous.~For all $x,y\in\Rset^n$,
\begin{align}
L(y)\leq L(x)+{\nabla}L(x)^{\Trans}\left(y-x\right)+\displaystyle\frac{\lambda_L}{2}\left\|y-x\right\|_2^2\enspace.
\end{align}
\end{lem}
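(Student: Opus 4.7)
The plan is to reduce the multivariate inequality to a one-dimensional integration along the segment joining $x$ and $y$. I would introduce the auxiliary function $\phi:[0,1]\to\Rset$ defined by $\phi(t):=L(x+t(y-x))$, which is continuously differentiable with $\phi'(t)=\nabla L(x+t(y-x))^{\Trans}(y-x)$ by the chain rule. Applying the fundamental theorem of calculus then yields
\begin{align}
L(y)-L(x)=\phi(1)-\phi(0)=\int_0^1\nabla L(x+t(y-x))^{\Trans}(y-x)\,dt\enspace.
\end{align}

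Next, I would add and subtract the first-order term $\nabla L(x)^{\Trans}(y-x)$ inside the integral to obtain
\begin{align}
L(y)=L(x)+\nabla L(x)^{\Trans}(y-x)+\int_0^1\left[\nabla L(x+t(y-x))-\nabla L(x)\right]^{\Trans}(y-x)\,dt\enspace.
\end{align}
It then suffices to upper bound the remainder integral. I would apply Cauchy-Schwarz pointwise in $t$ and invoke the $\lambda_L$-Lipschitz continuity of $\nabla L$, which gives
\begin{align}
\left|\left[\nabla L(x+t(y-x))-\nabla L(x)\right]^{\Trans}(y-x)\right|\leq\lambda_L\,t\,\left\|y-x\right\|_2^2\enspace.
\end{align}
Finally, integrating the scalar factor $t$ over $[0,1]$ produces the factor $1/2$ and yields the claimed bound $L(x)+\nabla L(x)^{\Trans}(y-x)+\frac{\lambda_L}{2}\|y-x\|_2^2$.

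The argument is entirely classical and presents no genuine obstacle; the only point that requires mild care is ensuring that the integrand is integrable, which follows from continuity of $\nabla L$ along the segment, itself a consequence of its Lipschitz continuity. Any non-trivial use of this lemma in the remainder of the paper will come from how tightly the Lipschitz constant $\lambda_L$ can be controlled on the relevant iterates, rather than from the derivation itself.
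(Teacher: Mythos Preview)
Your argument is correct and is the standard derivation of the descent lemma. Note that the paper does not actually supply its own proof of this statement: it is stated with a citation to \cite{bert1982} and used as a known preliminary, so there is no in-paper proof to compare against.
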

The distance of a point $x\in\Rset^n$ to a subset $\Sigma$ of $\Rset^n$ is defined by 
\begin{align}
d(x,\Sigma):=\infim_{y\in\Sigma}\big\|x-y\big\|_2\enspace.
\end{align}
The open ball with center $x$ and radius $r$ is denoted by $\Bcal\left(x,r\right)$.~Given $\underline{x},\overline{x}\in\Rset^n$, the box set $\left\{x\in\Rset^n~\big|~\underline{x}\leq x \leq\overline{x}\right\}$ is denoted by $B\left(\underline{x},\overline{x}\right)$.~Given a closed convex set $\Omega\subseteq\Rset^n$, the single-valued projection onto $\Omega$ is denoted by $\pi_\Omega\left(\cdot\right)$.~The relative interior of $\Omega\subseteq\Rset^n$ is denoted by $\ri\Omega$.~Given a polynomial function $f$, its degree is denoted by $\deg\left(f\right)$.~A semi-algebraic function is a function whose graph can be expressed as a union of intersections of level sets of polynomials.  
\section{Solving time-dependent distributed nonlinear programs}
\label{sec:algo_desc}
\subsection{Problem formulation}
\label{subsec:pb}
The following class of parametric NLPs with separable cost, partially separable equality constraints and separable inequality constraints is considered
\begin{align}
\label{eq:prm_nlp_1}
&\minimise_{z_1,\ldots,z_{N_a}}~J\left(z\right):=\sum_{i=1}^{N_a}J_i\left(z_i\right)\\
&\text{s.t.}~~~~Q_c\left(z_1,\ldots,z_{N_a}\right)=0,\enspace\nonumber\\
&~~~~~~~~g_i\left(z_i\right)+T_is_k=0,\enspace\nonumber\\
&~~~~~~~~z_i\in\Zcal_i,~i\in\left\{1,\ldots,N_a\right\},\enspace\nonumber
\end{align}
\looseness-1where $z:=\left(z_1^{\Trans},\ldots,z_{N_a}^{\Trans}\right)^{\Trans}\in\Rset^{n_z}$, with $n_z=\sum_{i=1}^{N_a}n_i\geq2$ and $z_i\in\Rset^{n_i}$.~The vectors $z_i$ model different agents, while the function $Q_c:\Rset^{n_z}\rightarrow\Rset^m$ represent \textit{constraint couplings}.
\begin{rk}
\looseness-1For clarity, the definition of NLP\ \eqref{eq:prm_nlp_1} is restricted to constraint couplings.~However, cost couplings can be addressed by the approach described in the sequel.
\end{rk}
\looseness-1The functions $J_i:\Rset^{n_i}\rightarrow\Rset$ and $g_i:\Rset^{n_z}\rightarrow\Rset^{q_i}$ are \textit{individual cost and constraint functionals} at agent $i\in\left\{1,\ldots,N_a\right\}$.~In an NMPC context, the nonlinear equality constraint involving $g_i$ models the dynamics of agent $i$ over a prediction horizon.~The vector $s_k$ is a time-dependent parameter, which lies within a set $\Scal\subseteq\Rset^p$, where $k$ stands for a time index.~When it comes to NMPC, the parameter $s_k$ stands for a state estimate or a reference trajectory, for instance.~The matrices $T_i\in\Rset^{q_i\times{}p}$ are constant.~The linear dependence of the local equality constraints in the parameter $s$ is not restrictive, as extra variables can be introduced in order to obtain this formulation.~For all $s\in\Scal$, we define the equality constraints functional 
\begin{align}
\nonumber
G(\cdot,s):=\left(Q_c\left(\cdot\right)^{\Trans},\left(g_1\left(\cdot\right)+T_1s\right)^{\Trans},\ldots,\left(g_{N_a}\left(\cdot\right)+T_{N_a}s\right)^{\Trans}\right)^{\Trans}
\end{align}
For all $i\in\left\{1,\ldots,N_a\right\}$, the constraint sets $\Zcal_i$ are assumed to be bounded boxes.~Note that such an assumption is not restrictive, as slack variables can always be introduced.~KKT points of NLP\ \eqref{eq:prm_nlp_1} are denoted by $w_k^{\ast}$ or $w^{\ast}\left(s_k\right)$ without distinction.  
\begin{ass}
\label{ass:ass_1}
The functions $Q_c$, $J_i$ and $g_i$ are multivariate polynomials.~For each $i\in\left\{1,\ldots,N_a\right\}$, $\deg\left(J_i\right)\geq2$.
\end{ass}
\begin{rk}
From a control perspective, this implies that the theoretical developments that follow are valid when NLP\ \eqref{eq:prm_nlp_1} is obtained via discretisation of optimal control problems with polynomial dynamics and quadratic costs for instance. 
\end{rk}
As we are targeting distributed applications, further assumptions are required on the coupling function $Q_c$.
\begin{ass}[Sparse coupling,\cite{bert1997}]
\label{ass:ass_2}
The sub-variables $z_1,\ldots,z_{N_a}$ can be re-ordered and grouped together in such a way that a Gauss-Seidel sweep on the real-valued function $\left\|Q_c\right\|_2^2$ can be performed in $P$ parallel steps, where $P\ll N_a$.~The re-ordered and grouped sub-variables are denoted by $z_1,\ldots,z_P$, so that the re-arranged vector $z$ is defined by $z=\left(z_1^{\Trans},\ldots,z_P^{\Trans}\right)^{\Trans}$.~In the remainder, it is assumed that NLP\ \eqref{eq:prm_nlp_1} has been re-arranged accordingly.  
\end{ass}
\begin{rk}
Assumption\ \ref{ass:ass_2} is standard in distributed computations\ \cite{bert1997}.~It encompasses a large number of practical problems of interest.~For consensus problems, in which coupling constraints $z_1-z_i=0$ appear for all $i\in\left\{2,\ldots,N_a\right\}$, one obtains $P=2$ parallel steps, corresponding to the update of $z_1$ followed by the updates of $z_2,\ldots,z_{N_a}$ in parallel.~When the coupling graph is a tree, such as in the case of a distribution network, one also obtain $P=2$.~Our approach is likely to be more efficient when $P$ is small relative to $N_a$.  
\end{rk}

\subsection{A non-convex decomposition scheme for optimality tracking}
\label{subsec:algo}
At every time instant $k$, a critical point of the parametric NLP\ \eqref{eq:prm_nlp_1} is computed inexactly in a distributed manner.~The key idea is to track time-dependent optima $w^{\ast}_k$ of \eqref{eq:prm_nlp_1} by approximately computing saddle points of the augmented Lagrangian  
\begin{align}
\label{eq:al_1}
L_\rho\left(z,\mu,s_k\right):=~&J\left(z\right)+\left(\mu+\displaystyle\frac{\rho}{2}G\left(z,s_k\right)\right)^{\Trans}G\left(z,s_k\right)\enspace,
\end{align}
subject to $z\in\Zcal$, where $\Zcal:=\Zcal_1\times\Zcal_2\times\ldots\times\Zcal_P$ and $\mu:=\left(\mu_C^{\Trans},\mu_1^{\Trans},\ldots,\mu_P^{\Trans}\right)^{\Trans}\in\Rset^{m+q}$, with $q:=\sum_{i=1}^Pq_i$, is a dual variable associated with the equality constraints $Q_c\left(z\right)=0$, $g_1\left(z_1\right)+T_1s_k=0$,\ldots, $g_P\left(z_P\right)+T_Ps_k=0$.~The scalar $\rho>0$ is the so-called \textit{penalty parameter}, which remains constant as $s_k$ and $\mu$ vary.~As $Q_c$, $J_i$ and $g_i$ are multivariate polynomials, $L_\rho\left(\cdot,\mu,s\right)$ is a multivariate polynomial, whose degree is assumed to be larger than $2$.~We define
\begin{align} 
\label{eq:deg_al}
d_L:=\deg\left(L_\rho\left(\cdot,\mu,s\right)\right)\geq{2}\enspace.
\end{align}
In the rest of the paper, sub-optimality of a variable is highlighted by a $\bar{\cdot}$, and criticality by a ${\cdot}^{\ast}$. We intend to build an approximate KKT point $\left(\bar{z}(s_{k+1})^{\Trans},\bar{\mu}(s_{k+1})^{\Trans}\right)^{\Trans}$ of\ \eqref{eq:prm_nlp_1} by incremental improvement from the KKT point $\left(\bar{z}(s_k)^{\Trans},\bar{\mu}(s_k)^{\Trans}\right)^{\Trans}$, once the parameter $s_{k+1}$ is available. 

\begin{algorithm}[h!]
	\caption{\label{algo:splt_trck}Optimality tracking splitting algorithm}
	\begin{algorithmic}
		\State \textbf{Input:} Suboptimal primal-dual solution $\left(\bar{z}(s_k)^{\Trans},\bar{\mu}(s_k)^{\Trans}\right)^{\Trans}$, parameter $s_{k+1}$, augmented Lagrangian ${L_{\rho}\left(\cdot,\bar{\mu}_k,s_{k+1}\right)}$.
		\State \underline{\textbf{Primal/Inner loop}}:
		\State $z^{(0)}\leftarrow\bar{z}(s_k)$
		\For{$l=0\ldots M-1$}
		\For{$i=1\ldots P$} \Comment{$P\ll N_a$}
		\State \Comment{In parallel in group $i$}
		\State$z_i^{(l+1)}\leftarrow\texttt{bckMin}\left(z_i^{(l)},\bar{\mu}(s_k),\rho,s_{k+1}\right)$ 
		\EndFor
		\EndFor
		\State $\bar{z}(s_{k+1})\leftarrow z^{(M)}$
		\State \underline{\textbf{Dual update}}:~$\bar{\mu}(s_{k+1})\leftarrow\bar{\mu}(s_k)+{\rho}G\left(\bar{z}(s_{k+1}),s_{k+1}\right)$  	
		\end{algorithmic}
\end{algorithm}
Algorithm\ \ref{algo:splt_trck} computes a suboptimal primal variable $\bar{z}(s_{k+1})$ by applying $M$ iterations of a proximal alternating linearised method to minimise the augmented Lagrangian functional $L_\rho\left(\cdot,\bar{\mu}(s_k),s_{k+1}\right)+\sum_{i=1}^P\iota_{\Zcal_i}\left(\cdot\right)$.
    
\looseness-1We define the block-wise augmented Lagrangian function at group $i\in\left\{1,\ldots,P\right\}$, where $P\ll N_a$ by Assumption\ \ref{ass:ass_2},
\begin{align}
\label{eq:def_coor_al}
L_{\rho,\mu,s}^{(i)}:=L_\rho\left(z_1,\ldots,z_{i-1},\cdot,z_{i+1},\ldots,z_P,\mu,s\right)
\end{align}
and a quadratic model at $z_i$, encompassing all agents of group $i$, given a curvature coefficient $c_i>0$,
\begin{align}
q\left(\cdot;z_i,c_i\right):=&L_{\rho,\mu,s}^{(i)}\left(z_i\right)+{\nabla}L_{\rho,\mu,s}^{(i)}\left(z_i\right)^{\Trans}\left(\cdot-z_i\right)\nonumber\\
&~~~~~~~~~~~~~~~~~~~~~~~~~~~~~~~+\displaystyle\frac{c_i}{2}\left\|\cdot-z_i\right\|_2^2\enspace.\nonumber
\end{align}
Given an iteration index $l\geq1$, we define 
\begin{align}
L_{\rho,\mu,s}^{(i,l)}:=L_\rho\left(z_1^{(l+1)},\ldots,z_{i-1}^{(l+1)},\cdot,z_{i+1}^{(l)},\ldots,z_P^{(l)},\mu,s\right)\enspace.\nonumber
\end{align}
For every group of agent indexed by $i\in\left\{1,\ldots,P\right\}$, a regularisation coefficient $\alpha_i>0$ is chosen.~In practice, such a coefficient should be taken as small as possible. 

\begin{algorithm}[h!]
\caption{\label{algo:loc_bck}Parallel backtracking procedure at group $i\in\left\{1,\ldots,P\right\}$ and iteration $l\geq1$, $\texttt{bckMin}\left(z_i,\mu,\rho,s\right)$}
\begin{algorithmic}
	\State \textbf{Input:} Primal variable $z_i^{(l)}\in\Zcal_i$, initial guess on local curvature $c_i>0$, block-wise augmented Lagrangian $L^{(i,l)}_{\rho,\mu,s}$, quadratic model $q$, regularisation coefficient $\alpha_i>0$ and backtracking coefficient $\beta>1$. 
	\State$z_i^{(u)}\leftarrow z_i^{(l)}$
	\State \underline{\textbf{Backtracking loop}}:
	\While{$L_{\rho,\mu,s}^{(i,l)}\left(z_i^{(u)}\right)+\displaystyle\frac{\alpha_i}{2}\left\|z_i^{(u)}-z_i^{(l)}\right\|_2^2>q\left(z_i^{(u)};z_i^{(l)},c_i\right)$} 
		\State$c_i\leftarrow\beta\cdot c_i$
		\State$z_i^{(u)}\leftarrow\prox^{\iota_{\Zcal_i}}_{c_i}\left(z_i^{(l)}-\displaystyle\frac{1}{c_i}\nabla L_{\rho,\mu,s}^{(i,l)}\left(z_i^{(l)}\right)\right)$
	\EndWhile
	\State \textbf{Output:} $z_i^{(u)}$
\end{algorithmic}
\end{algorithm}

\looseness-1Each step of the alternating minimisation among the $P$ groups of agents (Algorithm\ \ref{algo:splt_trck}) consists in backtracking projected gradient steps in parallel for each group (Algorithm\ \ref{algo:loc_bck}).~Later in the paper, it is proven that the backtracking loop of Algorithm\ \ref{algo:loc_bck} terminates in a finite number of iterations, and that convergence of the primal loop in Algorithm\ \ref{algo:splt_trck} to a critical point of the augmented Lagrangian\ \eqref{eq:al_1} is guaranteed for an infinite number of primal iterations ($M=\infty$ in Algorithm \ref{algo:splt_trck}).~In practice, after a fixed number of primal iterations $M$, the dual variable is updated in a first-order fashion.~Hence, the whole procedure yields a suboptimal KKT point 
\begin{align}
\nonumber
\bar{w}_{k+1}=\left(\bar{z}\left(s_{k+1}\right)^{\Trans},\bar{\mu}\left(s_{k+1}\right)^{\Trans}\right)^{\Trans}
\end{align}
for program\ \eqref{eq:prm_nlp_1} given parameter $s_{k+1}$.
\begin{rk}
\looseness-1Incremental approaches are broadly applied in NMPC, as fully solving an NLP takes a significant amount of time and may result in unacceptable time delays.~Yet, existing incremental NMPC strategies\ \cite{diehl2005,zav2009} are based on Newton predictor-corrector steps, which require factorisation of a KKT system.~This a computationally demanding task for large-scale systems that cannot be readily carried out in a distributed context.~Therefore, Algorithm\ \ref{algo:splt_trck} can be interpreted as a distributed incremental improvement technique for NMPC. 
\end{rk}  
\begin{rk}
Note that the active-set at $z^\ast\left(s_{k+1}\right)$ may be different from the active-set at $z^\ast\left(s_k\right)$.~Hence, Algorithm\ \ref{algo:splt_trck} should be able to detect active-set changes quickly.~This is the role of the proximal steps, where projections onto the sets $\Zcal_i$ are carried out.~It is well-known that gradient projection methods allow for fast activity detection\ \cite{conn1988}.
\end{rk}
\section{Theoretical tools: Strong regularity and Kurdyka-Lojasiewicz inequality}
\label{sec:theo_tools}
The analysis of Algorithm\ \ref{algo:splt_trck} is based on the concept of generalised equations, which has been introduced in real-time optimisation by\ \cite{zav2010}.~Another key ingredient for the convergence of the  proximal alternating minimisations in Algorithm\ \ref{algo:splt_trck} is the Kurdyka-Lojasiewicz (KL) property, which has been introduced in nonlinear programming by\ \cite{att2009,att2013} and is satisfied by semi-algebraic functions\ \cite{bolte2007}.~Hence, this property encompasses a broad class of functions appearing in NLPs arising from the discretisation of optimal control problems.

\subsection{Parametric generalised equations}
\label{subsec:pge}
KKT points $w^\ast\left(s\right)=\left({z^\ast(s)}^{\Trans},{\mu^\ast(s)}^{\Trans}\right)^{\Trans}$ of the parametric nonlinear program\ \eqref{eq:prm_nlp_1} satisfy $z^\ast(s)\in\Zcal$ and
\begin{align}
\label{eq:crit_pt}
\left\{
\begin{aligned}
	&0\in\nabla_zJ(z^\ast(s))+\nabla_zG(z^\ast(s),s)^{\Trans}\mu^\ast(s)+\Ncal_{\Zcal}\left(z^\ast(s)\right)\\
	&G(z^\ast(s),s)=0
\end{aligned}
\right.\enspace.
\end{align}
Relation\ \eqref{eq:crit_pt} can be re-written as the generalised equation
\begin{align}
\label{eq:ge_1}
0\in F\left(w,s\right)+\Ncal_{\Zcal\times\Rset^m}\left(w\right)\enspace,
\end{align}
where 
\begin{align}
F\left(w,s\right):=\begin{bmatrix}
	\nabla_zJ(z)+\nabla_zG(z,s)^{\Trans}\mu\\
	G(z,s)
\end{bmatrix}\enspace,&~w=\begin{bmatrix}
z\\ \mu
\end{bmatrix}\enspace.
\end{align}
\looseness-1In order to analyse the behaviour of the KKT points of\ \eqref{eq:prm_nlp_1} as the parameter $s_k$ evolves over time, the generalised equation\ \eqref{eq:ge_1} should satisfy some regularity assumptions.~This is captured by the\ \textit{strong regularity} concept \cite{robin1980,zav2010}.
\begin{definition}[Strong regularity,\cite{robin1980}]
Let $\Omega$ be a compact convex set in $\Rset^n$ and $f:\Rset^n\rightarrow\Rset^n$ a differentiable mapping.~A generalised equation $0\in f(x)+\Ncal_\Omega\left(x\right)$ is said to be\ \textit{strongly regular at a solution} $x^\ast\in\Omega$ if there exists radii $\eta>0$ and $\kappa>0$ such that for all $r\in\Bcal\left(0,\eta\right)$, there exits a unique $x_r\in\Bcal\left(x^\ast,\kappa\right)$ such that
\begin{align}
\label{eq:strg_reg}
r\in f(x^\ast)+{\nabla}f(x^\ast)\left(x_r-x^\ast\right)+\Ncal_\Omega\left(x_r\right)\enspace,
\end{align}
and the inverse mapping $r\mapsto x_r$ from $\Bcal\left(0,\eta\right)$ to $\Bcal\left(x^\ast,\kappa\right)$ is Lipschitz continuous. 
\end{definition}
\begin{rk}
Note that strong regularity incorporates active-set changes in its definition, as the normal cone is taken at $x_r$ in Eq.\ \eqref{eq:strg_reg}.~The set of active constraints at $x_r$ may be different from the one at $x^\ast$.~Nevertheless, Lipschitz continuity of the solution is guaranteed.
\end{rk}
\begin{rk}
As the constraint set $\Zcal$ in\ \eqref{eq:prm_nlp_1} is polyhedral, it can be shown that strong regularity of a KKT point of\ \eqref{eq:prm_nlp_1} is equivalent to linear independence constraints qualification and strong second-order optimality\ \cite{don1996}, which are standard assumptions in nonlinear programming.
\end{rk}
As parameter $s_k$ changes in time, strong regularity is assumed at every time instant $k$.
\begin{ass}
\label{ass:strg_reg}
For all parameters $s_k\in\Scal$ and associated solutions $w^\ast_k$, the generalised equation\ \eqref{eq:ge_1} is strongly regular at $w^\ast_k$.
\end{ass}
From the strong regularity Assumption\ \ref{ass:strg_reg}, it can be proven that the non-smooth manifold formed by the solutions to the parametric program\ \eqref{eq:prm_nlp_1} is locally Lipschitz continuous.~The first step to achieve this fundamental property is the following Theorem proven in\ \cite{robin1980}.
\begin{theo}
\label{theo:strg_reg_th}
There exists radii $\delta_A>0$ and $r_A>0$ such that for all $k\in\Nset$, for all $s\in\Bcal\left(s_k,r_A\right)$, there exists a unique $w^\ast\left(s\right)\in\Bcal\left(w^\ast_k,\delta_A\right)$ such that
\begin{align}
0\in F\left(w^\ast\left(s\right),s\right)+\Ncal_{\Zcal\times\Rset^m}\left(w^\ast\left(s\right)\right)
\end{align}
and for all $s,s'\in\Bcal\left(s_k,r_A\right)$, 
\begin{align} 
\label{eq:robin}
\left\|w^\ast\left(s\right)-w^\ast\left(s'\right)\right\|_2\leq\lambda_A\left\|F\left(w^\ast\left(s'\right),s\right)-F\left(w^\ast\left(s'\right),s'\right)\right\|_2\enspace,
\end{align}
where $\lambda_A$ is a Lipschitz constant associated with the strong regularity mapping of \eqref{eq:ge_1}.
\end{theo}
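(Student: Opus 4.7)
The plan is to obtain Theorem~\ref{theo:strg_reg_th} as a parametric application of Robinson's implicit function theorem for strongly regular generalised equations, invoked at each reference point $w^\ast_k$. Strong regularity of \eqref{eq:ge_1} at $w^\ast_k$ (Assumption~\ref{ass:strg_reg}) provides local radii and a Lipschitz inverse for the linearised generalised equation; the constants $\delta_A$, $r_A$ and $\lambda_A$ are then extracted from these data together with the smoothness of $F$.

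First, I would recast the parametric generalised equation as a perturbation of the linearisation at $(w^\ast_k,s_k)$. Rewriting
\begin{align*}
0\in F(w,s)+\Ncal_{\Zcal\times\Rset^m}(w)
\end{align*}
as
\begin{align*}
r(w,s)\in F(w^\ast_k,s_k)+\nabla_w F(w^\ast_k,s_k)(w-w^\ast_k)+\Ncal_{\Zcal\times\Rset^m}(w),
\end{align*}
with $r(w,s):=F(w^\ast_k,s_k)+\nabla_w F(w^\ast_k,s_k)(w-w^\ast_k)-F(w,s)$, the smoothness of $F$ (which is polynomial) guarantees $r(w,s)\to0$ as $(w,s)\to(w^\ast_k,s_k)$. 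Composing the map $w\mapsto r(w,s)$ with the Lipschitz inverse of the linearised generalised equation supplied by strong regularity yields a contraction on a small ball around $w^\ast_k$, whose unique fixed point is $w^\ast(s)\in\Bcal(w^\ast_k,\delta_A)$.

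For the Lipschitz-type bound \eqref{eq:robin}, I would pick $s,s'\in\Bcal(s_k,r_A)$ and regard $w^\ast(s)$ as solving the generalised equation $0\in F(\cdot,s')+\Ncal_{\Zcal\times\Rset^m}(\cdot)$ perturbed by $F(w^\ast(s),s)-F(w^\ast(s),s')$. A first-order expansion around $w^\ast(s')$ of the form
\begin{align*}
F(w^\ast(s),s)-F(w^\ast(s),s')=F(w^\ast(s'),s)-F(w^\ast(s'),s')+R,
\end{align*}
with residual $R=\Ocal(\|w^\ast(s)-w^\ast(s')\|_2(\|s-s'\|_2+\|w^\ast(s)-w^\ast(s')\|_2))$, allows replacing the perturbation argument by $w^\ast(s')$. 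After shrinking $r_A$ and $\delta_A$ so that the higher-order residual is absorbed into the left-hand side, applying the Lipschitz-inverse property of strong regularity to the resulting perturbed equation delivers \eqref{eq:robin} with constant $\lambda_A$.

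The main obstacle is the uniformity of $\delta_A$, $r_A$ and $\lambda_A$ across all $k\in\Nset$: Assumption~\ref{ass:strg_reg} is pointwise and only supplies local constants $(\eta_k,\kappa_k,\lambda_k)$ that a priori depend on $w^\ast_k$. Since $\Zcal$ is a bounded box the primal components $z^\ast_k$ lie in a compact set, and $F$ together with $\nabla_w F$ are polynomial, hence uniformly continuous on bounded subsets; these facts should allow the local constants to be chosen uniformly along the realised parameter trajectory, which is what the subsequent analysis requires.
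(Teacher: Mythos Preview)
The paper does not actually prove Theorem~\ref{theo:strg_reg_th}: it presents the statement as ``the following Theorem proven in~\cite{robin1980}'' and then remarks that the uniformity of $\delta_A$, $r_A$ in $k$ is a \emph{refinement} of Robinson's Theorem~2.1, i.e.\ an extra hypothesis rather than a derived conclusion. Your sketch, by contrast, reproduces Robinson's contraction-mapping argument and then attempts to \emph{prove} the uniformity. So the pointwise part of your proposal (existence, uniqueness, and the bound~\eqref{eq:robin} for fixed $k$) is exactly the cited result and is fine; the difference lies entirely in how the uniform constants are obtained.

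On that point your argument has a gap. You invoke compactness of $\Zcal$ to bound the primal components $z^\ast_k$, but the KKT points $w^\ast_k=(z^\ast_k,\mu^\ast_k)$ also carry multipliers $\mu^\ast_k\in\Rset^{m+q}$, and nothing in Assumptions~\ref{ass:ass_1}--\ref{ass:strg_reg} forces $\{\mu^\ast_k\}$ to lie in a bounded set. Uniform continuity of $F$ and $\nabla_w F$ on bounded primal sets is therefore not enough: the strong-regularity radii $\eta_k,\kappa_k$ and Lipschitz constants $\lambda_k$ depend on the full primal-dual point, and without a bound on the dual components (or an explicit compactness assumption on the trajectory $\{w^\ast_k\}$) you cannot conclude $\inf_k\eta_k>0$, $\inf_k\kappa_k>0$, $\sup_k\lambda_k<\infty$. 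The paper sidesteps this by simply \emph{assuming} the refined, $k$-independent constants; if you want to prove uniformity you would need an additional hypothesis such as boundedness of $\{\mu^\ast_k\}$ or compactness of $\Scal$ together with continuity of the KKT map.
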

\begin{rk}
Theorem\ \ref{theo:strg_reg_th} is actually a refinement of Theorem $2.1$ in\ \cite{robin1980}, as the radii $\delta_A$ and $r_A$ are assumed not to depend on the parameter $s_k\in\Scal$.
\end{rk}
Relation\ \eqref{eq:robin} does not exactly correspond to a Lipschitz property.~Yet, this point is addressed by the following Lemma.
\begin{lem}
\label{lem:lip_map}
There exists $\lambda_F>0$ such that for all $w\in\Zcal\times\Rset^m$,
\begin{align}
\label{eq:lip_F}
\forall s,s'\in\Scal, \left\|F(w,s)-F(w,s')\right\|_2\leq\lambda_F\left\|s-s'\right\|_2\enspace.
\end{align}
\end{lem}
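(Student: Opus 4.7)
The plan is to exploit the very specific way the parameter $s$ enters the problem: affinely and with a constant matrix. I would start by writing the difference $F(w,s)-F(w,s')$ block by block:
\begin{align}
F(w,s)-F(w,s')=\begin{bmatrix}\bigl(\nabla_zG(z,s)-\nabla_zG(z,s')\bigr)^{\Trans}\mu\\ G(z,s)-G(z,s')\end{bmatrix}\enspace,\nonumber
\end{align}
noting that $J(z)$ does not depend on $s$, so it cancels from the first block.

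Next I would use the structural form of $G$: the coupling term $Q_c(z)$ is $s$-independent, while each individual equality constraint reads $g_i(z_i)+T_is$, with $g_i$ independent of $s$ and $T_i$ a constant matrix. Consequently $G(\cdot,s)-G(\cdot,s')$ has a block-sparse form which I would stack into a single matrix
\begin{align}
T:=\bigl(0_{m\times p}^{\Trans},T_1^{\Trans},\ldots,T_P^{\Trans}\bigr)^{\Trans}\enspace,\nonumber
\end{align}
giving $G(z,s)-G(z,s')=T(s-s')$ irrespective of $z$. Taking Euclidean norms yields the bound $\|G(z,s)-G(z,s')\|_2\leq\|T\|_2\,\|s-s'\|_2$ for every $z\in\Zcal$.

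The crucial second observation is that $\nabla_zG(z,s)$ is the same for all $s\in\Scal$: differentiating a term $g_i(z_i)+T_is$ with respect to $z$ kills the $T_is$ part. Hence the first block of $F(w,s)-F(w,s')$ vanishes identically, for \emph{every} $w=(z^{\Trans},\mu^{\Trans})^{\Trans}$. This is where the bound becomes uniform in $w$, which is a priori non-trivial since $\mu$ ranges over the unbounded set $\Rset^m$; had the gradient $\nabla_zG$ depended on $s$, a uniform-in-$\mu$ Lipschitz constant would not exist. Combining the two observations, one obtains the claimed inequality with $\lambda_F:=\|T\|_2$.

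There is no real obstacle: the statement is essentially a structural consequence of Assumption on the parameter entering affinely through $T_is$ with $s$-independent matrices. The only point requiring care is to verify the $\mu$-uniformity via the $s$-independence of $\nabla_zG$; once this is noted, the proof reduces to a one-line norm estimate.
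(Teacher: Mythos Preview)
Your proposal is correct and follows essentially the same approach as the paper: write $F(w,s)-F(w,s')$ block by block, observe that the top block vanishes because $\nabla_zG$ is $s$-independent, and bound the bottom block via the stacked matrices $T_i$. The only cosmetic difference is the constant: you take $\lambda_F=\|T\|_2$, which is sharper than the paper's $\lambda_F=P\cdot\max_i\|T_i\|_2$.
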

\begin{proof}
Let $w\in\Zcal\times\Rset^m$ and $s,s'\in\Scal$.
\begin{align}
\nonumber 
F\left(w,s\right)-F\left(w,s'\right)&=\begin{bmatrix}
\left(\nabla_zG\left(z,s\right)-\nabla_zG\left(z,s'\right)\right)^{\Trans}\mu\\
G\left(z,s\right)-G\left(z,s'\right)\end{bmatrix}\nonumber\\
&=\begin{bmatrix}
0\\
T_1\left(s-s'\right)\\
\ldots\\
T_P\left(s-s'\right)
\end{bmatrix}\enspace.\nonumber
\end{align}
Hence,\ \eqref{eq:lip_F} holds with 
\begin{align}
\nonumber
\lambda_F=P\cdot\max\left\{\left\|T_1\right\|_2,\ldots,\left\|T_P\right\|_2\right\}\enspace.
\end{align} 
\end{proof}
\looseness-1Algorithm\ \ref{algo:splt_trck} tracks the non-smooth solution manifold by traveling from neighbourhood to neighbourhood, where Lipschitz continuity of the primal-dual solution holds.~Such tracking procedures have been analysed thoroughly in the unconstrained case by\ \cite{diehl2005} for a Newton-type method, in the constrained case by\ \cite{zav2010} for an augmented Lagrangian approach and in\ \cite{quoc2012} for an adjoint-based technique.~These previous tracking strategies are purely centralised second-order strategies and do not readily extend to solving NLPs in a distributed manner.~Our Algorithm\ \ref{algo:splt_trck} proposes a novel way of computing predictor steps along the solution manifold via a decomposition approach, which is tailored to convex constraint sets with closed-form proximal operators.~Such a class encompasses boxes, non-negative orthant, semi-definite cones and balls for instance.~The augmented Lagrangian framework is particularly attractive in this context, as it allows one to preserve `nice' constraints via partial penalisation.

\subsection{Convergence of the inner loop}
\label{subsec:prim_cv}
\looseness-1The primal loop of Algorithm\ \ref{algo:splt_trck} consists of alternating proximal gradient steps.~In general, for non-convex programs as they appear in NMPC, the convergence of such Gauss-Seidel type methods to critical points of the objective is not guaranteed even for smooth functions, as oscillatory behaviours may occur\ \cite{pow1973}.~Yet, some powerful convergence results on alternating minimisation techniques have been recently derived\ \cite{att2013}.~The key ingredients are coordinate-wise proximal regularisations and the KL property\ \cite{att2009}.~The former enforces a sufficient decrease in the objective at every iteration, while the latter models a local growth of the function around its critical points.~In our analysis, the convergence of the primal sequence generated by Algorithm\ \ref{algo:splt_trck} is of primary importance, since it comes with a sub-linear convergence rate estimate, which depends on the so-called Lojasiewicz exponent\ \cite{att2009} of the augmented Lagrangian function and does not assume that the optimal active set has been identified.~This last point is of primary importance in a parametric setting, as there are no guarantees that the active-set at $\bar{z}_k$ is the same as the one at $z^{\ast}_{k+1}$.

\looseness-1The following Theorem is a formulation of the KL property for a multivariate polynomial function over a box.~In this particular case, the Lojasiewicz exponent can be explicitly computed.~It is proven to be a simple function of the degree of the polynomial and its dimension.   

\begin{theo}
\label{th:kl_poly}
Let $L:\Rset^n\rightarrow\Rset$, $n\geq1$, be a polynomial function of degree $\deg\left(L\right)\geq2$.~Let $\Omega\subset\Rset^n$ be a non-trivial polyhedral set.~Assume that all restrictions of $L$ to faces of $\Omega$ that are not vertices, have degree larger than two.~Given $x^{\ast}$ a critical point of $L+\iota_\Omega$, there exists constants $\delta>0$ and $c>0$ such that for all $x\in\Bcal\left(x^{\ast},\delta\right)\cap\Omega$ and all $v\in\Ncal_\Omega\left(x\right)$,  
\begin{align}
\label{eq:kl_smal}
\left\|\nabla{}L\left(x\right)+v\right\|_2\geq c\left|L\left(x\right)-L\left(x^{\ast}\right)\right|^{\theta\left(\deg\left(L\right),n\right)}\enspace,
\end{align}
where 
\begin{align}
\label{eq:def_th}
\theta\left(d,n\right):=1-\displaystyle\frac{1}{d\left(3d-3\right)^{n-1}}\enspace.
\end{align}
\end{theo}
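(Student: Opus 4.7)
The strategy is to reduce the constrained inequality to the classical (unconstrained) Lojasiewicz gradient inequality for polynomials, applied separately on the relative interior of each face of $\Omega$, and then patch the finitely many local estimates together into a single bound with the exponent\ \eqref{eq:def_th}.

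First, I would exploit the polyhedral structure of $\Omega$. Every $x\in\Omega$ lies in the relative interior of a unique face $F$ of $\Omega$, on which the normal cone is constant and contained in the orthogonal complement of $\aff(F)$: indeed, the generators of $\Ncal_\Omega(x)$ are outward normals of facets containing $F$, which are perpendicular to $\aff(F)$. Hence, decomposing $\nabla L(x)=g_\parallel+g_\perp$ with $g_\parallel$ the orthogonal projection of $\nabla L(x)$ onto the linear subspace parallel to $\aff(F)$, any $v\in\Ncal_\Omega(x)$ satisfies
\begin{align}
\left\|\nabla L(x)+v\right\|_2^2=\left\|g_\parallel\right\|_2^2+\left\|g_\perp+v\right\|_2^2\geq\left\|\nabla\bigl(L|_F\bigr)(x)\right\|_2^2\enspace,\nonumber
\end{align}
where $L|_F$ is the restriction of $L$ to $\aff(F)$, identified with $\Rset^{\dim F}$.

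Second, I would invoke the explicit D'Acunto--Kurdyka gradient inequality for real polynomials: for any polynomial $p:\Rset^k\rightarrow\Rset$ with $\deg(p)\geq2$ and any $y^\ast\in\Rset^k$, there exist $\delta_p,c_p>0$ such that
\begin{align}
\left\|\nabla p(y)\right\|_2\geq c_p\bigl|p(y)-p(y^\ast)\bigr|^{\theta(\deg(p),k)}\nonumber
\end{align}
for all $y\in\Bcal(y^\ast,\delta_p)$, with $\theta$ defined in\ \eqref{eq:def_th}. The assumption that $L$ has degree at least two on every non-vertex face guarantees applicability on each face $F\ni x^\ast$ that is not the singleton $\{x^\ast\}$, yielding a local inequality with exponent $\theta(\deg(L|_F),\dim F)$. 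The vertex case $F=\{x^\ast\}$ is trivial, since there only $x=x^\ast$ contributes, and both sides of\ \eqref{eq:kl_smal} vanish.

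Third, I would uniformize the exponent across faces. Since $\deg(L|_F)\leq\deg(L)$ and $\dim F\leq n$, and since $\theta(d,n)$ is increasing in both arguments, shrinking the radius so that $|L(x)-L(x^\ast)|\leq1$ on $\Bcal(x^\ast,\delta)\cap\Omega$ (possible by continuity of $L$) allows me to replace the face-dependent exponent by the global $\theta(\deg(L),n)$ via the elementary inequality $t^\alpha\geq t^\beta$ for $t\in[0,1]$ and $\alpha\leq\beta$. Since $\Omega$ is polyhedral, only finitely many faces meet $\Bcal(x^\ast,\delta)$; taking the minimum over this finite collection of the constants $c_F$ and of the radii $\delta_F$ produces uniform $c,\delta>0$ satisfying\ \eqref{eq:kl_smal}.

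The main obstacle is the invocation of the D'Acunto--Kurdyka bound itself, which is the non-trivial algebraic-geometric ingredient; everything else is routine polyhedral convex analysis, a projection-perpendicularity argument for the normal cone, and a monotonicity trick to harmonise the Lojasiewicz exponents across faces. I would either cite their paper directly or, if a self-contained argument is wanted, reproduce the rescaling-along-curves argument at $x^\ast$ combined with the quantifier-elimination degree bound underlying formula\ \eqref{eq:def_th}.
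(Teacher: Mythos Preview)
Your proposal is correct and uses the same essential ingredients as the paper: restriction of $L$ to faces of $\Omega$, orthogonality of the normal cone on $\ri F$ to the linear span of $F$ (the paper writes this as $Z^{\Trans}v=0$), the D'Acunto--Kurdyka explicit exponent applied to each restricted polynomial (the paper also relies on the fact that the base point need not be critical, citing Remark~3.2 of \cite{att2010}), and the monotonicity of $\theta(d,n)$ combined with $|L(x)-L(x^\ast)|<1$ to harmonise exponents across the finitely many faces.

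The only organisational difference is that the paper packages the argument as a proof by contradiction: it first invokes semi-algebraicity to get \emph{some} KL exponent $\theta'$, defines $\theta'_f$ as the infimum of admissible exponents, assumes $\theta'_f>\theta(\deg(L),n)$, builds a violating sequence $\{(x_n,v_n)\}$, extracts a subsequence lying in the relative interior of a single face, and reaches a contradiction via D'Acunto--Kurdyka on that face. Your direct argument --- bound on each face, then take the minimum of finitely many constants and radii --- is cleaner and sidesteps the slightly delicate bookkeeping in the paper concerning the choice of the constant $c''$ before the face is identified. Conversely, the paper's framing makes explicit that the inequality \eqref{eq:kl_smal} is a \emph{sharpening} of the generic semi-algebraic KL property, which is a useful conceptual point. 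Mathematically the two routes are equivalent.
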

\begin{proof}
Let $x^{\ast}$ be a critical point of $L+\iota_\Omega$.~From\ \cite{att2009}, as $L+\iota_\Omega$ is a semi-algebraic function, there exists a radius $\delta'>0$, a constant $c'>0$ and a coefficient $\theta'\in\left(0,1\right)$ such that for all $x\in\Bcal\left(x^{\ast},\delta'\right)\cap\Omega$ and all $v\in\Ncal_\Omega\left(x\right)$,
\begin{align}
\label{eq:kl_1}
\left\|\nabla{}L\left(x\right)+v\right\|_2\geq{}c'\left|L\left(x\right)-L\left(x^{\ast}\right)\right|^{\theta'}\enspace.
\end{align}
Define $\theta'_f$ as the infimum of all $\theta'$ for which\ \eqref{eq:kl_1} is satisfied.~Our goal is to show that
\begin{align}  
\nonumber
\theta'_f\leq\theta\left(\deg\left(L\right),n\right)\enspace,
\end{align}
as it directly implies that\ \eqref{eq:kl_smal} is satisfied.~One can assume that $\theta'_f>0$, since for $\theta'_f=0$ the proof would be immediate.~For the sake of contradiction, assume that 
\begin{align}
\nonumber
\theta'_f>\theta\left(\deg\left(L\right),n\right)\enspace.
\end{align}
Hence, one can pick $\tilde{\theta}\in\left(\theta\left(\deg\left(L\right),n\right),\theta'_f\right)$ and $c''>0$, and construct a sequence $\left\{\left(x_n,v_n\right)\right\}$ satisfying for all $n\geq1$,
\begin{align}
\label{eq:contra}
\left\{
\begin{aligned}
&x_n\in\Bcal\left(x^{\ast},\displaystyle\frac{1}{n}\right)\cap\Omega,~v_n\in\Ncal_\Omega\left(x_n\right)\\
&\left\|\nabla{}L\left(x_n\right)+v_n\right\|_2<c''\left|L\left(x_n\right)-L\left(x^{\ast}\right)\right|^{\tilde{\theta}}
\end{aligned}\enspace.
\right.
\end{align}
Without loss of generality, one can find a face $\Fcal$ of $\Omega$, which is not a vertex and contains $x^{\ast}$, and a subsequence $\left\{x_{n_k}\right\}$ such that 
\begin{align}
\nonumber
x_{n_k}\in\ri\Fcal\enspace,
\end{align}
for $k$ large enough and satisfying\ \eqref{eq:contra}.~Moreover, for all $x\in\ri\Fcal$, there exists $p\in\Rset^{d_\Fcal}$
\begin{align}  
x=x^{\ast}+Zp\enspace,\nonumber
\end{align}
where $Z\in\Rset^{n\times{}d_\Fcal}$ is a full column-rank matrix, with $d_\Fcal$ the dimension of the affine hull of $\Fcal$.~As the face $\Fcal$ is not a vertex, $d_\Fcal\geq1$.~Subsequently, one can define a polynomial function $L^{\ast}:\Rset^{d_\Fcal}\rightarrow\Rset$ as follows
\begin{align}
\nonumber
L^{\ast}\left(p\right):=L\left(x^{\ast}+Zp\right)\enspace.
\end{align}
From the results of\ \cite{acunto2005} (no matter whether $0$ is a critical point of $L^{\ast}$ or not, see Remark $3.2$ in\ \cite{att2010}), as $\deg\left(L^{\ast}\right)\geq2$ by assumption, there exists a radius $\delta^{\ast}>0$ and a constant $c^{\ast}>0$ such that for all $p\in\Bcal\left(0,\delta^{\ast}\right)$    
\begin{align}
\nonumber
\left\|\nabla{}L^{\ast}\left(p\right)\right\|_2\geq{}c^{\ast}\left|L^{\ast}\left(p\right)-L^{\ast}\left(0\right)\right|^{\theta\left(\deg\left(L^{\ast}\right),d_\Fcal\right)}\enspace.
\end{align}
However, $\deg\left(L^{\ast}\right)\leq\deg\left(L\right)$ and $d_\Fcal\leq{}n$, which implies that
\begin{align}
\theta\left(\deg\left(L^{\ast}\right),d_\Fcal\right)\leq\theta\left(\deg\left(L\right),n\right)\enspace.
\end{align}
As $L^{\ast}$ is a continuous function, the radius $\delta^{\ast}$ can always be chosen such that 
\begin{align}
 \nonumber
 \left|L^{\ast}\left(p\right)-L^{\ast}\left(0\right)\right|<1\enspace.
\end{align}
This implies that for all $p\in\Bcal\left(0,\delta^{\ast}\right)$, 
\begin{align}
\nonumber
\left\|\nabla{}L^{\ast}\left(p\right)\right\|_2\geq{}c^{\ast}\left|L^{\ast}\left(p\right)-L^{\ast}\left(0\right)\right|^{\theta\left(\deg\left(L\right),n\right)}\enspace.\nonumber
\end{align}
Hence, there exists $K\geq1$ such that for all $k\geq{}K$,
\begin{align}
\nonumber
\left\|\nabla{}L\left(x_{n_k}\right)+v_{n_k}\right\|_2&\geq\frac{1}{\left\|Z\right\|_2}\left\|Z^{\Trans}\left(\nabla{}L\left(x_{n_k}\right)+v_{n_k}\right)\right\|_2\nonumber\\
&\geq\frac{1}{\left\|Z\right\|_2}\left\|Z^{\Trans}\left(\nabla{}L\left(x^{\ast}+Zp_{n_k}\right)+v_{n_k}\right)\right\|_2\nonumber\\
&\geq\frac{1}{\left\|Z\right\|_2}\left\|\nabla{}L^{\ast}\left(p_{n_k}\right)\right\|_2\nonumber\\
&\geq\frac{c^{\ast}}{\left\|Z\right\|_2}\left|L\left(x_{n_k}\right)-L\left(x^{\ast}\right)\right|^{\theta\left(\deg\left(L\right),n\right)}\enspace.\nonumber
\end{align}
The third inequality follows from $Z^{\Trans}v_{n_k}=0$, as $v_{n_k}$ is in the normal cone to $\Fcal$.~However, since $c''$ can be chosen equal to $\nicefrac{c^{\ast}}{\left\|Z\right\|_2}$ as $\Omega$ has finitely many faces, the above implies that 
\begin{align}
\nonumber
\left|L\left(x_{n_k}\right)-L\left(x^{\ast}\right)\right|^{\theta\left(\deg\left(L\right),n\right)}<\left|L\left(x_{n_k}\right)-L\left(x^{\ast}\right)\right|^{\tilde{\theta}}\enspace.
\end{align}
This leads to a contradiction for $k$ large enough so that $\left|L\left(x_{n_k}\right)-L\left(x^{\ast}\right)\right|<1$, as $\tilde{\theta}>\theta\left(\deg\left(L\right),n\right)$ by assumption.
\end{proof}
\begin{cor}
\label{cor:kl_al}
\looseness-1Given $\mu\in\Rset^{m+q}$, $s\in\Scal$ and $\rho>0$, $L\left(\cdot,\mu,s\right)+\iota_\Zcal$ satisfies inequality\ \eqref{eq:kl_smal} around all its critical points with radius $\delta>0$ and constant $c>0$, where $L\left(\cdot,\mu,s\right)$ is the augmented Lagrangian function defined in\ \eqref{eq:al_1}.
\end{cor}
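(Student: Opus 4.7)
The plan is to obtain the corollary as an essentially immediate consequence of Theorem~\ref{th:kl_poly}, specialised to $L=L_\rho\left(\cdot,\mu,s\right)$ and $\Omega=\Zcal$. I would first walk through the hypotheses of Theorem~\ref{th:kl_poly} and verify them for this particular choice. The polynomial nature of $L_\rho\left(\cdot,\mu,s\right)$, for fixed $\mu\in\Rset^{m+q}$ and $s\in\Scal$, is immediate from Assumption~\ref{ass:ass_1}: $J$, $G\left(\cdot,s\right)$ and therefore the bilinear term $\mu^{\Trans}G\left(\cdot,s\right)$ and the quadratic penalty $\tfrac{\rho}{2}\left\|G\left(\cdot,s\right)\right\|_2^2$ are all multivariate polynomials, and sums and products of multivariate polynomials are multivariate polynomials. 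The global degree assumption $\deg\left(L_\rho\left(\cdot,\mu,s\right)\right)=d_L\geq2$ is exactly what \eqref{eq:deg_al} records. Since $\Zcal=\Zcal_1\times\ldots\times\Zcal_P$ is a finite Cartesian product of bounded boxes with non-empty interior, it is a non-trivial polyhedral set, so the requirement on $\Omega$ in Theorem~\ref{th:kl_poly} is satisfied.

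The only hypothesis that requires a genuine verification is the face-wise degree condition: every restriction of $L_\rho\left(\cdot,\mu,s\right)$ to a face of $\Zcal$ that is not a single vertex must still have degree at least two. I expect this to be the main technical point. The idea is that any non-vertex face $\Fcal$ of $\Zcal$ is obtained by freezing some coordinates of $z$ at their box bounds and leaving the remaining coordinates free, so $\dim\aff\Fcal\geq1$. Writing the restriction as a polynomial $L_\Fcal$ in those free coordinates, the contribution coming from the quadratic penalty $\tfrac{\rho}{2}\left\|G\left(\cdot,s\right)\right\|_2^2$ provides, generically, a quadratic term in the free variables; combined with the fact that each individual cost $J_i$ has $\deg\left(J_i\right)\geq2$, this is the natural route to conclude $\deg\left(L_\Fcal\right)\geq2$.

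Assuming the face condition is in force, Theorem~\ref{th:kl_poly} applies verbatim at any critical point $x^\ast$ of $L_\rho\left(\cdot,\mu,s\right)+\iota_\Zcal$, producing radii $\delta>0$ and constants $c>0$, and Lojasiewicz exponent
\begin{align}
\nonumber
\theta\left(d_L,n_z\right)=1-\frac{1}{d_L\left(3d_L-3\right)^{n_z-1}}\enspace,
\end{align}
such that \eqref{eq:kl_smal} holds on $\Bcal\left(x^\ast,\delta\right)\cap\Zcal$ for every normal vector $v\in\Ncal_\Zcal\left(x\right)$. This is exactly the claim of the corollary. The only delicate point to dispatch is therefore the face-wise degree verification; everything else is bookkeeping that transports the hypotheses of Assumption~\ref{ass:ass_1} and definition \eqref{eq:deg_al} to the hypotheses of Theorem~\ref{th:kl_poly}.
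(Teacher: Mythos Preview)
Your approach is correct and matches the paper's: the paper's proof is simply ``This is an immediate consequence of Theorem~\ref{th:kl_poly} and Assumption~\ref{ass:ass_1}.'' You have gone further by explicitly walking through the hypotheses and, in particular, flagging the face-wise degree condition as the only non-trivial point; the paper does not verify this condition explicitly either (it is implicitly folded into the standing assumption $d_L\geq2$ recorded at~\eqref{eq:deg_al} together with Assumption~\ref{ass:ass_1}), so your level of detail already exceeds what the paper provides.
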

\begin{proof}
This is an immediate consequence of Theorem\ \ref{th:kl_poly} and Assumption\ \ref{ass:ass_1}.
\end{proof}
In order to guarantee convergence of the primal loop of Algorithm\ \ref{algo:splt_trck} via Theorem $2.9$ in\ \cite{att2013}, two ingredients are needed: a\ \textit{sufficient decrease} property and a\ \textit{relative error} condition.~From the sufficient decrease, convergence of the series $\sum\left\|z^{(l+1)}-z^{(l)}\right\|_2^2$ is readily deduced.~By combining the relative error condition and the KL property, this can be turned into convergence of the series $\sum\left\|z^{(l+1)}-z^{(l)}\right\|_2$, ensuring convergence of the sequence $\left\{z^{(l)}\right\}$ via a Cauchy sequence argument\ \cite{att2013}.
\begin{lem}[\label{lem:suff_dec}Primal sufficient decrease]
\looseness-1For all $l\geq1$, $\mu\in\Rset^{m+q}$, $s\in\Scal$ and $\rho>0$,
\begin{align}
\label{eq:suff_dec}
L_\rho\left(z^{(l+1)},\mu,s\right)+&\iota_\Zcal\left(z^{(l+1)}\right)+\displaystyle\frac{\underline{\alpha}}{2}\left\|z^{(l+1)}-z^{(l)}\right\|_2^2\\
&\leq L_\rho\left(z^{(l)},\mu,s\right)+\iota_\Zcal\left(z^{(l)}\right)\enspace,\nonumber
\end{align}
where $\underline{\alpha}:=\min\left\{\alpha_i~\big|~i\in\left\{1,\ldots,P\right\}\right\}$.
\end{lem}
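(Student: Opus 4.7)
The plan is to establish the descent bound one block at a time and then telescope across the $P$ groups. Fix an outer iteration index $l \geq 1$ and a block $i \in \left\{1,\ldots,P\right\}$. Write $L^{(i,l)} := L_{\rho,\mu,s}^{(i,l)}$ for brevity. The update $z_i^{(l+1)}$ is produced by Algorithm~\ref{algo:loc_bck}, so on exit of its \textbf{while} loop two facts hold simultaneously: first, the backtracking condition is violated, i.e.\
\begin{align}
\nonumber
L^{(i,l)}\!\left(z_i^{(l+1)}\right) + \frac{\alpha_i}{2}\left\|z_i^{(l+1)}-z_i^{(l)}\right\|_2^2 \leq q\!\left(z_i^{(l+1)};z_i^{(l)},c_i\right),
\end{align}
and second, $z_i^{(l+1)}$ is the proximal point $\prox^{\iota_{\Zcal_i}}_{c_i}\!\big(z_i^{(l)}-\tfrac{1}{c_i}\nabla L^{(i,l)}(z_i^{(l)})\big)$ for the final value of the curvature $c_i$.

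The first step is to convert the proximal definition into a descent on the quadratic model. Expanding the square in Definition~\ref{def:prox} and dropping the $\frac{1}{2c_i}\|\nabla L^{(i,l)}(z_i^{(l)})\|_2^2$ constant, one sees that $z_i^{(l+1)}$ minimises $\iota_{\Zcal_i}(\cdot)+q(\cdot;z_i^{(l)},c_i)$. In particular, evaluating at the feasible candidate $z_i^{(l)}\in\Zcal_i$, and using $q(z_i^{(l)};z_i^{(l)},c_i)=L^{(i,l)}(z_i^{(l)})$, one obtains $z_i^{(l+1)}\in\Zcal_i$ together with
\begin{align}
\nonumber
q\!\left(z_i^{(l+1)};z_i^{(l)},c_i\right) \leq L^{(i,l)}\!\left(z_i^{(l)}\right).
\end{align}
Chaining this with the backtracking exit condition yields the per-block inequality
\begin{align}
\nonumber
L^{(i,l)}\!\left(z_i^{(l+1)}\right) + \frac{\alpha_i}{2}\left\|z_i^{(l+1)}-z_i^{(l)}\right\|_2^2 \leq L^{(i,l)}\!\left(z_i^{(l)}\right).
\end{align}

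The second step is to telescope. Using the definition of $L^{(i,l)}$ in \eqref{eq:def_coor_al}, $L^{(i,l)}(z_i^{(l+1)})$ is the full augmented Lagrangian with blocks $1,\ldots,i$ updated and $i+1,\ldots,P$ still at iteration $l$, while $L^{(i,l)}(z_i^{(l)})$ is the same expression with block $i$ also still at iteration $l$. Summing the per-block inequality over $i=1,\ldots,P$ makes the intermediate terms cancel, leaving $L_\rho(z^{(l+1)},\mu,s)$ on the left and $L_\rho(z^{(l)},\mu,s)$ on the right. Using $\alpha_i\geq\underline{\alpha}$ to lower-bound the regularisation terms and the fact that $z_i^{(l)},z_i^{(l+1)}\in\Zcal_i$ for every $i$ (so both indicator values are zero), one arrives at \eqref{eq:suff_dec}.

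The only genuinely delicate point is to make sure the backtracking loop indeed terminates at a value of $c_i$ for which the proximal identity still holds, so that the two inequalities used above refer to the \emph{same} $z_i^{(l+1)}$; the paper announces this termination property right after Algorithm~\ref{algo:loc_bck}, and it follows from Lemma~\ref{lem:desc_lem} applied to $L^{(i,l)}$ once $c_i\geq\lambda_{L^{(i,l)}}+\alpha_i$. Everything else is algebraic bookkeeping between the proximal optimality, the quadratic model $q$, and the telescoping sum.
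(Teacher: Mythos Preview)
Your proof is correct and follows essentially the same approach as the paper's: both combine the backtracking exit condition with the optimality of the proximal step (comparing against the feasible candidate $z_i^{(l)}$) to get the per-block decrease, and then telescope over $i=1,\ldots,P$. The only cosmetic difference is ordering --- the paper establishes termination of the backtracking loop first via Lemma~\ref{lem:desc_lem} and then derives the per-block inequality, whereas you derive the inequality first and relegate termination to a closing remark; the underlying logic is identical.
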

\begin{IEEEproof}
We first need to show that the backtracking procedure described in Algorithm\ \ref{algo:loc_bck} terminates.~This is an almost direct consequence of the Lipschitz continuity of the gradient of $L^{(i)}_{\rho,\mu,s}$ for $i\in\left\{1,\ldots,P\right\}$, as the augmented Lagrangian is twice continuously differentiable and $\Zcal$ is compact.~From Lemma\ \ref{lem:desc_lem}, it follows that for all $i\in\left\{1,\ldots,P\right\}$ and $l\geq1$,
\begin{align}
&L^{(i,l)}_{\rho,\mu,s}\left(z_i^{(l+1)}\right)\leq L^{(i,l)}_{\rho,\mu,s}\left(z_i^{(l)}\right)\nonumber\\
&+{\nabla}L^{(i,l)}_{\rho,\mu,s}\left(z_i^{(l)}\right)^{\Trans}\left(z_i^{(l+1)}-z_i^{(l)}\right)+\displaystyle\frac{\lambda_i}{2}\left\|z_i^{(l+1)}-z_i^{(l)}\right\|_2^2\enspace,\nonumber
\end{align}
where $\lambda_i$ is a Lipschitz constant of ${\nabla}L^{(i)}_{\rho,\mu,s}$.~By taking 
\begin{align}
c_i>\lambda_i+\alpha_i\enspace,\nonumber
\end{align} 
which is satisfied at some point in the\ \texttt{while} loop of Algorithm \ref{algo:loc_bck}, since $\beta>1$, one gets 
\begin{align}
L^{(i,l)}_{\rho,\mu,s}&\left(z_i^{(l+1)}\right)+\displaystyle\frac{\alpha_i}{2}\left\|z_i^{(l+1)}-z_i^{(l)}\right\|_2^2\leq L^{(i,l)}_{\rho,\mu,s}\left(z_i^{(l)}\right)\nonumber\\
&+{\nabla}L^{(i,l)}_{\rho,\mu,s}\left(z_i^{(l)}\right)^{\Trans}\left(z_i^{(l+1)}-z_i^{(l)}\right)+\displaystyle\frac{c_i}{2}\left\|z_i^{(l+1)}-z_i^{(l)}\right\|_2^2\enspace,\nonumber
\end{align}
which is exactly the termination criterion of the\ \texttt{while} loop in Algorithm\ \ref{algo:loc_bck}.~Moreover,
\begin{align}
z_i^{(l+1)}&=\prox^{\iota_{\Zcal_i}}_{c_i}\left(z_i^{(l)}-\displaystyle\frac{1}{c_i}{\nabla}L^{(i,l)}_{\rho,\mu,s}\left(z_i^{(l)}\right)\right)\nonumber\\
&=\argmin_{x\in\Zcal_i} \displaystyle\frac{c_i}{2}\left\|x-\left(z_i^{(l)}-\displaystyle\frac{1}{c_i}{\nabla}L^{(i,l)}_{\rho,\mu,s}\left(z_i^{(l)}\right)\right)\right\|_2^2\nonumber\\
&=\argmin_{x\in\Zcal_i} {\nabla}L^{(i,l)}_{\rho,\mu,s}\left(z_i^{(l)}\right)^{\Trans}\left(x-z_i^{(l)}\right)+\displaystyle\frac{c_i}{2}\left\|x-z_i^{(l)}\right\|_2^2\enspace.\nonumber
\end{align}
Hence
\begin{align}
{\nabla}L^{(i,l)}_{\rho,\mu,s}\left(z_i^{(l)}\right)^{\Trans}\left(z_i^{(l+1)}-z_i^{(l)}\right)+\displaystyle\frac{c_i}{2}\left\|z_i^{(l+1)}-z_i^{(l)}\right\|_2^2\leq 0\enspace,\nonumber
\end{align}
which implies that
\begin{align}
\label{eq:suff_dec_i}
L^{(i,l)}_{\rho,\mu,s}\left(z_i^{(l+1)}\right)+&\iota_{\Zcal_i}\left(z_i^{(l+1)}\right)+\displaystyle\frac{\alpha_i}{2}\left\|z_i^{(l+1)}-z_i^{(l)}\right\|_2^2\\
&\leq L^{(i)}_{\rho,\mu,s}\left(z_i^{(l)}\right)+\iota_{\Zcal_i}\left(z_i^{(l)}\right)\enspace,\nonumber
\end{align}
\looseness-1as $z_i^{(l+1)},z_i^{(l)}\in\Zcal_i$.~By summing inequalities\ \eqref{eq:suff_dec_i} for all $i\in\left\{1,\ldots,P\right\}$, one obtains the sufficient decrease property\ \eqref{eq:suff_dec}.
\end{IEEEproof}

\begin{lem}[\label{lem:rel_err}Relative error condition]
For all $\mu\in\Rset^{m+q}$, $s\in\Scal$ and $\rho>0$, there exists $\gamma\left(\mu,\rho,s\right)>0$ such that 
\begin{align}
\label{eq:rel_err} 
&\exists v^{(l+1)}\in\Ncal_\Zcal\left(z^{(l+1)}\right),\\
&\left\|\nabla_zL_\rho\left(z^{(l+1)},\mu,s\right)+v^{(l+1)}\right\|_2\leq\gamma\left(\rho,\mu,s\right)\left\|z^{(l+1)}-z^{(l)}\right\|_2\enspace.\nonumber
\end{align}
\end{lem}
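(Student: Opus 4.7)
The plan is to use the first-order optimality condition satisfied by each block update $z_i^{(l+1)}$ to extract an element of the normal cone $\mathcal{N}_{\mathcal{Z}_i}(z_i^{(l+1)})$, then measure the difference between the ``partial'' gradient that appears in this condition and the full gradient $\nabla_{z_i} L_\rho(z^{(l+1)},\mu,s)$ via Lipschitz continuity on the compact set $\mathcal{Z}$.

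First I would observe that, by the definition of the prox step in Algorithm~\ref{algo:loc_bck},
\begin{align}
z_i^{(l+1)}=\argmin_{x\in\Zcal_i}\nabla L^{(i,l)}_{\rho,\mu,s}\bigl(z_i^{(l)}\bigr)^{\Trans}\bigl(x-z_i^{(l)}\bigr)+\displaystyle\frac{c_i}{2}\bigl\|x-z_i^{(l)}\bigr\|_2^2,\nonumber
\end{align}
so that the optimality condition reads
\begin{align}
-\nabla L^{(i,l)}_{\rho,\mu,s}\bigl(z_i^{(l)}\bigr)-c_i\bigl(z_i^{(l+1)}-z_i^{(l)}\bigr)\in\Ncal_{\Zcal_i}\bigl(z_i^{(l+1)}\bigr).\nonumber
\end{align}
I define $v_i^{(l+1)}$ to be exactly this vector and assemble $v^{(l+1)}:=\bigl(v_1^{(l+1)\Trans},\ldots,v_P^{(l+1)\Trans}\bigr)^{\Trans}$. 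Since $\Zcal=\Zcal_1\times\cdots\times\Zcal_P$, the normal cone factors as $\Ncal_{\Zcal}\bigl(z^{(l+1)}\bigr)=\Ncal_{\Zcal_1}\bigl(z_1^{(l+1)}\bigr)\times\cdots\times\Ncal_{\Zcal_P}\bigl(z_P^{(l+1)}\bigr)$, hence $v^{(l+1)}\in\Ncal_{\Zcal}\bigl(z^{(l+1)}\bigr)$ as required.

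Next I would add and subtract $\nabla_{z_i}L_\rho\bigl(z^{(l+1)},\mu,s\bigr)$ to write, blockwise,
\begin{align}
\nabla_{z_i}L_\rho\bigl(z^{(l+1)},\mu,s\bigr)+v_i^{(l+1)}=\underbrace{\nabla_{z_i}L_\rho\bigl(z^{(l+1)},\mu,s\bigr)-\nabla L^{(i,l)}_{\rho,\mu,s}\bigl(z_i^{(l)}\bigr)}_{(A_i)}-c_i\bigl(z_i^{(l+1)}-z_i^{(l)}\bigr).\nonumber
\end{align}
The term $(A_i)$ is the difference of $\nabla_{z_i}L_\rho$ evaluated at $z^{(l+1)}$ and at $\bigl(z_1^{(l+1)},\ldots,z_{i-1}^{(l+1)},z_i^{(l)},z_{i+1}^{(l)},\ldots,z_P^{(l)}\bigr)$, so the two arguments differ only in blocks $i,i+1,\ldots,P$. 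Because $L_\rho(\cdot,\mu,s)$ is $C^2$ and $\Zcal$ is compact, $\nabla_z L_\rho(\cdot,\mu,s)$ is Lipschitz on $\Zcal$ with some constant $\Lambda(\mu,\rho,s)$, yielding
\begin{align}
\bigl\|(A_i)\bigr\|_2\leq\Lambda(\mu,\rho,s)\sum_{j=i}^{P}\bigl\|z_j^{(l+1)}-z_j^{(l)}\bigr\|_2.\nonumber
\end{align}
Finally, I would note that the backtracking loop terminates as soon as $c_i>\lambda_i+\alpha_i$ (as shown in the proof of Lemma~\ref{lem:suff_dec}), so after backtracking $c_i\leq\overline{c}:=\beta\max_i(\lambda_i+\alpha_i)$ uniformly (up to the initial guess). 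Combining these bounds over $i=1,\ldots,P$ and applying Cauchy--Schwarz yields
\begin{align}
\bigl\|\nabla_zL_\rho\bigl(z^{(l+1)},\mu,s\bigr)+v^{(l+1)}\bigr\|_2\leq\gamma(\mu,\rho,s)\bigl\|z^{(l+1)}-z^{(l)}\bigr\|_2,\nonumber
\end{align}
with $\gamma(\mu,\rho,s)$ expressible in terms of $P$, $\overline{c}$ and $\Lambda(\mu,\rho,s)$.

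The only subtle point is ensuring that the per-block Lipschitz argument goes through under the assembled blockwise vectors: this follows because the partial gradient $\nabla_{z_i}L_\rho$ is just a coordinate projection of the full Lipschitz gradient. The uniform upper bound on the $c_i$ produced by backtracking is equally essential, and is the one piece one must justify carefully to obtain a constant $\gamma(\mu,\rho,s)$ that is independent of the iteration index $l$.
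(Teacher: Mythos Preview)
Your proof is correct and follows essentially the same route as the paper: extract $v_i^{(l+1)}$ from the proximal optimality condition, assemble it into $v^{(l+1)}\in\Ncal_{\Zcal}(z^{(l+1)})$ via the product structure, and control the gap between the Gauss--Seidel partial gradient and the full gradient by Lipschitz continuity on the compact set $\Zcal$. The only cosmetic difference is that the paper splits $(A_i)$ into two pieces (first using the block Lipschitz constant $\lambda_i$ to pass from $z_i^{(l)}$ to $z_i^{(l+1)}$, then the full constant $\lambda$ to update the remaining blocks), whereas you bound $(A_i)$ in a single step with one global constant $\Lambda$; and you are more explicit than the paper about the uniform-in-$l$ bound $c_i\leq\overline{c}$ coming from the backtracking termination, which is indeed needed for $\gamma$ to be independent of $l$.
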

\begin{IEEEproof}
From the definition of $z_i^{(l+1)}$ as a proximal iterate, we have that for all $i\in\left\{1,\ldots,P\right\}$, 
\begin{align}
\exists v_i^{(l+1)}\in&~\Ncal_{\Zcal_i}\left(z_i^{(l+1)}\right),\nonumber\\
&0={\nabla}L^{(i,l)}_{\rho,\mu,s}\left(z_i^{(l)}\right)+c_i\left(z_i^{(l+1)}-z_i^{(l)}\right)+v_i^{(l+1)}\enspace.\nonumber
\end{align}
Hence 
\begin{align}
0={\nabla}L^{(i,l)}_{\rho,\mu,s}\left(z_i^{(l+1)}\right)&+{\nabla}L^{(i,l)}_{\rho,\mu,s}\left(z_i^{(l)}\right)-{\nabla}L^{(i,l)}_{\rho,\mu,s}\left(z_i^{(l+1)}\right)\nonumber\\
&+c_i\left(z_i^{(l+1)}-z_i^{(l)}\right)+v_i^{(l+1)}\enspace,
\end{align}
and from the Lipschitz continuity of ${\nabla}L^{(i)}_{\rho,\mu,s}$, one immediately obtains
\begin{align}
\left\|v_i^{(l+1)}+{\nabla}L^{(i,l)}_{\rho,\mu,s}\left(z_i^{(l+1)}\right)\right\|_2\leq\left(\lambda_i+c_i\right)\left\|z_i^{(l+1)}-z_i^{(l)}\right\|_2\enspace.\nonumber
\end{align}
Let $v:=\left(v_1^{\Trans},\ldots,v_P^{\Trans}\right)^{\Trans}$. It then follows that
\begin{align}
&\left\|v^{(l+1)}+{\nabla}L_{\rho,\mu,s}\left(z^{(l+1)}\right)\right\|_2\leq\nonumber\\
&\sum_{i=1}^P\left\|v_i^{(l+1)}+{\nabla}L_{\rho,\mu,s}^{(i,l)}\left(z_i^{(l+1)}\right)\right\|_2\nonumber\\
&~~~~~~~~~~~~+\left\|\nabla_iL_{\rho,\mu,s}\left(z^{(l+1)}\right)-{\nabla}L_{\rho,\mu,s}^{(i,l)}\left(z_i^{(l+1)}\right)\right\|_2\nonumber\\
&\leq\sum_{i=1}^P\left(\lambda_i+c_i\right)\left\|z_i^{(l+1)}-z_i^{(l)}\right\|_2+\lambda\left\|z^{(l+1)}-z^{(l)}\right\|_2\nonumber\\
&\leq\left(\sum_{i=1}^P\left(\lambda_i+c_i+\lambda\right)\right)\left\|z^{(l+1)}-z^{(l)}\right\|_2\enspace,\nonumber
\end{align}
where $\lambda>0$ is a Lipschitz constant of $\nabla{}L_{\rho,\mu,s}$.~This last inequality yields the relative error condition\ \eqref{eq:rel_err} by taking 
\begin{align}
\nonumber
\gamma\left(\rho,\mu,s\right):=\sum_{i=1}^P\left(\lambda_i+c_i+\lambda\right)\enspace.
\end{align}
\end{IEEEproof}

\begin{theo}[\label{th:prim_cv}Global convergence of the primal sequence]
\looseness-1Taking $M=\infty$ in Algorithm\ \ref{algo:splt_trck}, the primal sequence $\left\{z^{(l)}\right\}$ converges to a critical point $z^\infty\left(\bar{\mu}\left(s_k\right),s_{k+1}\right)$ of $L_\rho\left(\cdot,\bar{\mu}\left(s_k\right),s_{k+1}\right)+\iota_\Zcal$. 
\end{theo}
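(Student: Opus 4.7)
The plan is to invoke the abstract convergence result (Theorem $2.9$ of \cite{att2013}) for descent methods on semi-algebraic objectives, by verifying its three standard hypotheses in our setting. These are (H1) a sufficient decrease property, which is exactly Lemma \ref{lem:suff_dec}; (H2) a relative error bound between a sub-gradient of the objective at $z^{(l+1)}$ and the step $\|z^{(l+1)}-z^{(l)}\|_2$, which is exactly Lemma \ref{lem:rel_err}; together with (H3) the Kurdyka-Lojasiewicz property of the composite objective $L_\rho(\cdot,\bar{\mu}(s_k),s_{k+1}) + \iota_\Zcal$ at its critical points, which is exactly Corollary \ref{cor:kl_al}.

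First I would exploit compactness: since $\Zcal$ is bounded and $z^{(l)}\in\Zcal$ for every $l$, the primal sequence is bounded, and $L_\rho(\cdot,\bar{\mu}(s_k),s_{k+1})$ being continuous on $\Zcal$ implies that $\{L_\rho(z^{(l)},\bar{\mu}(s_k),s_{k+1})\}$ is bounded below. Combined with the monotone decrease from Lemma \ref{lem:suff_dec}, this sequence converges to some limit $L^\infty$, and telescoping yields the square-summability $\sum_{l\geq 1}\|z^{(l+1)}-z^{(l)}\|_2^2<\infty$. In particular $\|z^{(l+1)}-z^{(l)}\|_2\to 0$. The set of cluster points is non-empty by compactness; for any such cluster point $z^\infty$ arising along a subsequence $\{z^{(l_j)}\}$, the continuity of $L_\rho$ gives $L_\rho(z^\infty,\bar{\mu}(s_k),s_{k+1})=L^\infty$, and passing to the limit in Lemma \ref{lem:rel_err} using closedness of the graph of $\Ncal_\Zcal$ shows that $0\in\nabla_z L_\rho(z^\infty,\bar{\mu}(s_k),s_{k+1})+\Ncal_\Zcal(z^\infty)$, i.e.\ $z^\infty$ is a critical point of $L_\rho(\cdot,\bar{\mu}(s_k),s_{k+1})+\iota_\Zcal$.

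The delicate step, and the one I expect to be the main obstacle, is upgrading the $\ell^2$-summability of successive increments into $\ell^1$-summability $\sum_{l\geq 1}\|z^{(l+1)}-z^{(l)}\|_2<\infty$. This is achieved by the classical Attouch-Bolte-Svaiter argument: choose an iteration index $l_0$ large enough that the iterates $z^{(l)}$ for $l\geq l_0$ remain in a KL neighbourhood $\Bcal(z^\infty,\delta)$ of a fixed cluster point $z^\infty$ (possible because $\|z^{(l+1)}-z^{(l)}\|_2\to 0$, $L_\rho(z^{(l)})\to L^\infty$ and the $z^{(l)}$ have $z^\infty$ as a cluster point); apply Corollary \ref{cor:kl_al} with Lojasiewicz exponent $\theta=\theta(d_L,n_z)<1$ given by Theorem \ref{th:kl_poly}; and combine the concavity inequality for the desingularising function $\phi(t)=t^{1-\theta}/(1-\theta)$ with (H1) and (H2) to obtain an estimate of the form
\begin{align}
\|z^{(l+1)}-z^{(l)}\|_2 \leq C\bigl(\phi(L_\rho(z^{(l)})-L^\infty)-\phi(L_\rho(z^{(l+1)})-L^\infty)\bigr),\nonumber
\end{align}
whose right-hand side telescopes and is finite. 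This summability makes $\{z^{(l)}\}$ Cauchy in $\Rset^{n_z}$, hence convergent to a unique limit $z^\infty(\bar{\mu}(s_k),s_{k+1})$.

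Finally, by the preceding characterisation of cluster points, this unique limit is a critical point of $L_\rho(\cdot,\bar{\mu}(s_k),s_{k+1})+\iota_\Zcal$, which is the conclusion of the theorem. The only ingredients specific to our setting, beyond the abstract framework of \cite{att2013}, are the polyhedrality of $\Zcal$ (used in Theorem \ref{th:kl_poly} to obtain an explicit Lojasiewicz exponent) and Assumption \ref{ass:ass_1} (polynomiality), which together guarantee that Corollary \ref{cor:kl_al} applies uniformly to every critical point encountered along the iteration.
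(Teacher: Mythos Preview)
Your proposal is correct and follows essentially the same route as the paper: verify (H1) sufficient decrease via Lemma~\ref{lem:suff_dec}, (H2) relative error via Lemma~\ref{lem:rel_err}, and (H3) the KL property via Corollary~\ref{cor:kl_al}, then invoke Theorem~$2.9$ of \cite{att2013} together with compactness of $\Zcal$ and continuity of $L_\rho(\cdot,\bar{\mu}(s_k),s_{k+1})$. The paper's proof stops at the invocation and does not unpack the Attouch--Bolte--Svaiter telescoping argument you sketch; your additional detail is consistent with that framework and not needed for the present statement.
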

\begin{IEEEproof}
\looseness-1By Theorem\ \ref{th:kl_poly} and Corollary\ \ref{cor:kl_al}, the function $L_\rho\left(\cdot,\bar{\mu}\left(s_k\right),s_{k+1}\right)+\iota_\Zcal$ satisfies the KL property.~Moreover, sufficient decrease is guaranteed by Lemma\ \ref{lem:suff_dec} along with a relative error condition in Lemma\ \ref{lem:rel_err}.~As $L_\rho\left(\cdot,\bar{\mu}\left(s_k\right),s_{k+1}\right)$ is continuous and $\Zcal$ compact, global convergence of the sequence $\left\{z^{(l)}\right\}$ to a critical point of $L_\rho\left(\cdot,\bar{\mu}\left(s_k\right),s_{k+1}\right)+\iota_\Zcal$ is a direct consequence of Theorem $2.9$ in\ \cite{att2013}.
\end{IEEEproof}

\subsection{Convergence rate of the primal loop}
\label{subsec:prim_cv_rate}
\looseness-1The results of\ \cite{att2009} and\ \cite{att2013} provide an asymptotic convergence rate estimate for the proximal alternating loop in Algorithm\ \ref{algo:splt_trck}.~The convergence rate depends on the Lojasiewicz exponent
\begin{align}
\nonumber
\theta\left(d_L,n_z\right)
\end{align}
defined in\ \eqref{eq:def_th}, which only depends on the dimension of NLP\ \eqref{eq:prm_nlp_1} and the degree of the polynomial functions involved in it.~This is an important point in our analysis, as $\bar{\mu}_k$ and $s_k$ are updated at every time instant.

\begin{lem}[\label{lem:cv_rate}Asymptotic convergence rate estimate]
There exists a constant $C>0$ such that, assuming $\bar{z}\left(s_k\right)\in\Bcal\left(0,\delta\right)$, where $\delta$ has been defined in Corrolary\ \ref{cor:kl_al},
\begin{align}
\label{eq:cv_rate}
&\left\|\bar{z}\left(s_{k+1}\right)-z^{\infty}\left(\bar{\mu}_k,s_{k+1}\right)\right\|_2\leq\\\nonumber
&~~~~~~CM^{-\psi(d_L,n_z)}\left\|\bar{z}\left(s_k\right)-z^\infty\left(\bar{\mu}_k,s_{k+1}\right)\right\|_2\enspace,\nonumber
\end{align}
where
\begin{align}
\psi\left(d,n\right):=\displaystyle\frac{1}{d\left(3d-3\right)^{n-1}-2}\enspace,
\end{align}
for $d,n\geq2$.
\end{lem}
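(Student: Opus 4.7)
The plan is to combine the three ingredients established so far, namely the KL inequality with the explicit exponent of Theorem \ref{th:kl_poly} (applied to $L_\rho(\cdot,\bar\mu_k,s_{k+1})+\iota_\Zcal$ via Corollary \ref{cor:kl_al}), the primal sufficient decrease of Lemma \ref{lem:suff_dec}, and the relative error bound of Lemma \ref{lem:rel_err}. Write $V_l:=L_\rho(z^{(l)},\bar\mu_k,s_{k+1})-L_\rho(z^\infty,\bar\mu_k,s_{k+1})\geq 0$, which is well-defined for $l$ large enough since $\{z^{(l)}\}$ converges to $z^\infty$ by Theorem \ref{th:prim_cv} and we are assumed to start in $\Bcal(0,\delta)$ so that the KL inequality applies along the tail of the sequence. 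With $\theta=\theta(d_L,n_z)$, the KL inequality gives a $v^{(l+1)}\in\Ncal_\Zcal(z^{(l+1)})$ satisfying $V_{l+1}^{\theta}\le c^{-1}\|\nabla_zL_\rho(z^{(l+1)},\bar\mu_k,s_{k+1})+v^{(l+1)}\|_2$, and Lemma \ref{lem:rel_err} then bounds this by $(\gamma/c)\|z^{(l+1)}-z^{(l)}\|_2$.

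Combining this with the sufficient decrease $V_l-V_{l+1}\ge(\underline\alpha/2)\|z^{(l+1)}-z^{(l)}\|_2^2$ produces the master recursion
\begin{align}
V_{l+1}^{2\theta}\le K\,(V_l-V_{l+1}),\qquad K:=\frac{2\gamma^2}{c^2\underline\alpha},\nonumber
\end{align}
which is exactly the scalar inequality appearing in the abstract descent framework of Attouch--Bolte--Svaiter. For $\theta\in(1/2,1)$, as is the case here since $d_L(3d_L-3)^{n_z-1}\ge 2$ (using $d_L\ge 2$, $n_z\ge 2$), standard manipulation of this recursion (comparing $V_l^{1-2\theta}$ differences to $1$) yields an asymptotic estimate $V_l\le C_1\,l^{-1/(2\theta-1)}$. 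A second use of the KL inequality, together with the summability of $\|z^{(l+1)}-z^{(l)}\|_2$ (already established in the proof of Theorem \ref{th:prim_cv}), converts this functional bound into the trajectory bound $\|z^{(l)}-z^\infty\|_2\le C_2\,V_l^{1-\theta}$, so that
\begin{align}
\|z^{(M)}-z^\infty\|_2\le C_3\,M^{-(1-\theta)/(2\theta-1)}.\nonumber
\end{align}
A direct substitution of $\theta=\theta(d_L,n_z)$ shows that $(1-\theta)/(2\theta-1)=\psi(d_L,n_z)$, yielding the target exponent.

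It remains to exhibit the factor $\|\bar z(s_k)-z^\infty(\bar\mu_k,s_{k+1})\|_2$ on the right-hand side. The idea is a scaling/homogeneity argument applied to the recursion: since $z^{(0)}=\bar z(s_k)$, the initial gap $V_0$ and the initial step $\|z^{(1)}-z^{(0)}\|_2$ can each be bounded by a smooth multiple of $\|\bar z(s_k)-z^\infty\|_2$ (using Lipschitz continuity of $L_\rho(\cdot,\bar\mu_k,s_{k+1})$ on the compact set $\Zcal$ and the proximal contraction that defines the first inner step). Carrying this dependence through the recursion—i.e. rescaling $V_l$ by the initial gap and applying the abstract rate to the normalized sequence—lets one write the final constant as $C\,\|\bar z(s_k)-z^\infty(\bar\mu_k,s_{k+1})\|_2$ with a $C$ depending only on $\rho$, $\bar\mu_k$, $d_L$, $n_z$ and the geometry of $\Zcal$, which is the content of \eqref{eq:cv_rate}.

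The main technical obstacle I anticipate is not the rate $M^{-\psi(d_L,n_z)}$ itself, which follows rather mechanically from the Attouch--Bolte--Svaiter framework once Theorem \ref{th:kl_poly} supplies the explicit Lojasiewicz exponent, but rather the linear scaling in $\|\bar z(s_k)-z^\infty(\bar\mu_k,s_{k+1})\|_2$. Standard KL-based rate estimates are usually stated with a constant that depends implicitly (and non-homogeneously) on the initial distance to $z^\infty$, and making this dependence explicit and linear requires a careful normalization argument, together with a uniform control of the KL constants $c,\delta$ across the parameter range for $(\bar\mu_k,s_{k+1})$ that is relevant in the tracking context.
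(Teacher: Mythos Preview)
Your argument follows the same route as the paper's proof, just in far more detail. The paper's own proof is two lines: it observes that $\theta(d_L,n_z)\in(\tfrac12,1)$ when $d_L,n_z\ge 2$, and then cites Theorem~2 of \cite{att2009} directly, noting that the initial primal iterate is $\bar z(s_k)$. Your derivation of the master recursion $V_{l+1}^{2\theta}\le K(V_l-V_{l+1})$ from sufficient decrease, relative error, and the KL inequality, followed by the conversion $\|z^{(M)}-z^\infty\|_2\le C_3 M^{-(1-\theta)/(2\theta-1)}$, is precisely what that cited theorem encapsulates; your verification that $(1-\theta)/(2\theta-1)=\psi(d_L,n_z)$ is correct.

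Where you go beyond the paper is in flagging the linear factor $\|\bar z(s_k)-z^\infty(\bar\mu_k,s_{k+1})\|_2$ on the right-hand side. You are right to be cautious: the standard statement of Theorem~2 in \cite{att2009} yields $\|z^{(M)}-z^\infty\|_2\le \omega\, M^{-\psi}$ with a constant $\omega$ absorbing the initial data, not an explicitly linear dependence on the initial distance. The paper's proof does not address this point either; it simply invokes the cited theorem ``as the initial primal iterate is $\bar z(s_k)$'' and moves on. Your proposed normalisation argument is a reasonable sketch, but as you note it does not obviously produce a homogeneous linear factor (the natural scaling through $V_0$ would give a power $1-\theta<1$ of the initial distance, not a linear one). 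In the paper's later use of this lemma (Lemma~\ref{lem:sec_term} and Theorem~\ref{th:wk_con}) the initial distance is itself bounded by $\delta$, so the precise homogeneity of the prefactor is less critical than the rate $M^{-\psi}$ and the boundedness of $C$ on the relevant neighbourhood; but strictly speaking, the linear form asserted in \eqref{eq:cv_rate} is not established in either your argument or the paper's.
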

\begin{IEEEproof}
As $n_z\geq2$ and $d_L\geq2$,
\begin{align}
\nonumber
\theta\left(d_L,n_z\right)\in\left(\displaystyle\frac{1}{2},1\right)\enspace.
\end{align}
Inequality\ \eqref{eq:cv_rate} is then a direct consequence of Theorem $2$ in\ \cite{att2009} as the initial primal iterate is $\bar{z}\left(s_k\right)$. 
\end{IEEEproof}
\begin{rk}
\looseness-1The R-convergence rate estimate of Lemma\ \ref{lem:cv_rate} shows that the convergence of the primal sequence $\left\{z^{(l)}\right\}$ is theoretically sub-linear.~However, reasonable performance can be observed in practice.~Moreover, in this paper, the convergence rate is used only for a theoretical purpose.  
\end{rk}
\section{Contraction of the primal-dual sequence}
\label{sec:con_prop}
\looseness-1Algorithm\ \ref{algo:splt_trck} is a truncated scheme both in the primal and dual space, as only $M$ primal proximal iterations are applied, which are followed by a single dual update.~By using warm-starting, it is designed to track the non-smooth solution manifold of the NMPC program.~At a given time instant $k$, the primal-dual solution $\bar{w}_k$ is suboptimal.~Thus, a natural question is whether the sub-optimality gap remains stable, as the parameter $s_k$ varies over time, that is if the sub-optimal iterate remains close to the KKT manifold, or converges to it.~Intuitively, one can guess that if $s_k$ evolves slowly and the number of primal iterations $M$ is large enough, stability of the sub-optimality error is expected.~This section provides a formal statement about the sub-optimality gap and demonstrates that its evolution is governed by the penalty parameter $\rho$, the number of primal iterations $M$ and the magnitude of the parameter difference $s_{k+1}-s_k$, which need to be carefully chosen according to the results provided later in the paper.

\subsection{Existence and uniqueness of critical points}
\label{subsec:uni_kkt} 
\looseness-1As the overall objective is to analyse the stability of the sub-optimality error $\left\|\bar{w}_k-w^{\ast}_k\right\|_2$, a unique critical point $w^\ast_k$ should be defined at every time instant $k$.~This is one of the roles of strong regularity.~Given a critical point $w_k^\ast$ for problem \eqref{eq:prm_nlp_1} at $s_k$, its strong regularity (Assumption \ref{ass:strg_reg}) implies that there exists a unique critical point for problem\ \eqref{eq:prm_nlp_1} at $s_{k+1}$, assuming $\left\|s_{k+1}-s_k\right\|_2$ is small enough.
\begin{ass}
\label{ass:prm_diff}
For all $k\geq 0$, $\left\|s_{k+1}-s_k\right\|_2\leq r_A$. 
\end{ass}
\begin{rk}
\looseness-1In an NMPC setting, this assumption is satisfied if the sampling frequency is fast enough compared to the system's dynamics.
\end{rk}
\begin{lem}
For all $k\geq 0$ and $s_k\in\Scal$, given $w^\ast_k$ such that
\begin{align}
0\in F\left(w^\ast_k,s_k\right)+\Ncal_{\Zcal\times\Rset^m}\left(w^\ast_k\right)\enspace,\nonumber
\end{align}
there exists a unique $w^\ast_{k+1}\in\Bcal\left(w^\ast_k,\delta_A\right)$ such that 
\begin{align}
0\in F\left(w^\ast_{k+1},s_{k+1}\right)+\Ncal_{\Zcal\times\Rset^m}\left(w^\ast_{k+1}\right)\enspace.\nonumber
\end{align}
\end{lem}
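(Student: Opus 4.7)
The plan is to apply Theorem~\ref{theo:strg_reg_th} at the given critical point $w^\ast_k$, which is precisely tailored to yield this conclusion once the parameter increment is controlled.

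First, I would invoke Theorem~\ref{theo:strg_reg_th} at the solution $w^\ast_k$ of the generalised equation at $s_k$, which is justified by the strong regularity Assumption~\ref{ass:strg_reg}. This furnishes radii $\delta_A>0$ and $r_A>0$ (independent of $k$ by the refinement noted in the Remark following Theorem~\ref{theo:strg_reg_th}) such that for every $s\in\Bcal\left(s_k,r_A\right)$ there exists a unique $w^\ast\left(s\right)\in\Bcal\left(w^\ast_k,\delta_A\right)$ satisfying
\begin{align}
0\in F\left(w^\ast\left(s\right),s\right)+\Ncal_{\Zcal\times\Rset^m}\left(w^\ast\left(s\right)\right)\enspace.\nonumber
\end{align}

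Next, I would invoke Assumption~\ref{ass:prm_diff}, which guarantees $\left\|s_{k+1}-s_k\right\|_2\leq r_A$, so that $s_{k+1}$ lies within the admissible parameter neighbourhood of Theorem~\ref{theo:strg_reg_th} (up to a harmless enlargement of $r_A$ if the open/closed ball distinction matters). Setting $s=s_{k+1}$ in the conclusion above yields the existence of a unique $w^\ast_{k+1}:=w^\ast\left(s_{k+1}\right)\in\Bcal\left(w^\ast_k,\delta_A\right)$ with $0\in F\left(w^\ast_{k+1},s_{k+1}\right)+\Ncal_{\Zcal\times\Rset^m}\left(w^\ast_{k+1}\right)$, which is exactly the claim.

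Honestly, there is no real obstacle here: the lemma is a direct corollary of Theorem~\ref{theo:strg_reg_th} combined with Assumption~\ref{ass:prm_diff}. The only subtlety worth flagging explicitly in the writeup is that the radii $\delta_A$ and $r_A$ are uniform in $k$ (this is exactly the refinement over the original Robinson statement emphasised in the Remark), so that the same $\delta_A$ can be used to localise $w^\ast_{k+1}$ around $w^\ast_k$ at every time step, which is what later sections need in order to propagate contraction estimates along the solution manifold.
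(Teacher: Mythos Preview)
Your proposal is correct and matches the paper's own proof, which simply states that the result is an immediate consequence of Assumption~\ref{ass:prm_diff} and strong regularity of $w^\ast_k$ for all $k\geq 0$. Your expanded write-up via Theorem~\ref{theo:strg_reg_th} is exactly how that one-line justification unpacks.
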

\begin{IEEEproof}
\looseness-1This is an immediate consequence of Assumption\ \ref{ass:prm_diff} and strong regularity of $w^\ast_k$ for all $k\geq 0$. 
\end{IEEEproof}

\subsection{An auxiliary generalised equation}
\label{subsec:aux_ge}
\looseness-1In Algorithm\ \ref{algo:splt_trck}, the primal loop, which is initialised at $\bar{z}_k$, converges to $z^\infty\left(\bar{\mu}_k,s_{k+1}\right)$, a critical point of $L_\rho\left(\cdot,\bar{\mu}_k,s_{k+1}\right)+\iota_\Zcal\left(\cdot\right)$, by Theorem\ \ref{th:prim_cv} in Section\ \ref{sec:theo_tools}.~The following generalised equation characterises critical points of the augmented Lagrangian function $L_\rho\left(\cdot,\bar{\mu},s\right)+\iota_\Zcal\left(\cdot\right)$ in a primal-dual manner:
\begin{align}
\label{eq:ge_2}
0\in H_\rho\left(w,d_\rho\left(\bar{\mu}\right),s\right)+\Ncal_{\Zcal\times\Rset^m}\left(w\right)\enspace,
\end{align}
where, given $\mu^\ast_k$, one defines $d_\rho\left(\bar{\mu}\right):=\left(\bar{\mu}-\mu^\ast_k\right)/\rho$ and 
\begin{align}
H_\rho\left(w,d_\rho\left(\bar{\mu}\right),s\right):=\begin{bmatrix}
\nabla_zJ\left(z\right)+\nabla_zG\left(z,s\right)^{\Trans}\mu\\
G\left(z,s\right)+d_\rho\left(\bar{\mu}\right)+\displaystyle\frac{\mu^{\ast}_k-\mu}{\rho}
\end{bmatrix}\enspace.\nonumber
\end{align}
\begin{lem}
Let $\bar{\mu}\in\Rset^m$, $\rho>0$ and $s\in\Scal$.~The primal point $z^\ast(\bar{\mu},s)$ is a critical point of $L_{\rho}(\cdot,\bar{\mu},s)+\iota_\Zcal(\cdot)$ if and only if the primal-dual point 
\begin{align}
w^\ast\left(d_\rho\left(\bar{\mu}\right),s\right)=\begin{bmatrix}
z^\ast\left(\bar{\mu},s\right)\\
\bar{\mu}+\rho G\left(z^\ast\left(\bar{\mu},s\right),s\right)
\end{bmatrix}\nonumber
\end{align}
satisfies\ \eqref{eq:ge_2}.
\end{lem}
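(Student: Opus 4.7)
The statement is a direct equivalence between a primal stationarity condition for the regularised augmented Lagrangian and the auxiliary primal-dual generalised equation \eqref{eq:ge_2}. The plan is to prove both directions by a single chain of calculations that unpacks the definitions and exploits the fact that the dual block of the normal cone $\Ncal_{\Zcal\times\Rset^m}$ is trivial.

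First, I would compute the gradient of $L_\rho$ in $z$. From the definition \eqref{eq:al_1},
\begin{align}
\nabla_zL_\rho(z,\bar{\mu},s)=\nabla_zJ(z)+\nabla_zG(z,s)^{\Trans}\bigl(\bar{\mu}+\rho G(z,s)\bigr),\nonumber
\end{align}
so $z^\ast(\bar{\mu},s)$ is a critical point of $L_\rho(\cdot,\bar{\mu},s)+\iota_\Zcal$ if and only if
\begin{align}
0\in\nabla_zJ(z^\ast)+\nabla_zG(z^\ast,s)^{\Trans}\bigl(\bar{\mu}+\rho G(z^\ast,s)\bigr)+\Ncal_\Zcal(z^\ast).\nonumber
\end{align}

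Next, I would expand \eqref{eq:ge_2}. Since $\Ncal_{\Zcal\times\Rset^m}(w)=\Ncal_\Zcal(z)\times\{0\}$, the generalised equation splits into a dual equality and a primal inclusion. Substituting $d_\rho(\bar{\mu})=(\bar{\mu}-\mu^\ast_k)/\rho$ into the second block of $H_\rho$ yields
\begin{align}
G(z,s)+\frac{\bar{\mu}-\mu^\ast_k}{\rho}+\frac{\mu^\ast_k-\mu}{\rho}=G(z,s)+\frac{\bar{\mu}-\mu}{\rho}=0,\nonumber
\end{align}
so the auxiliary reference $\mu^\ast_k$ cancels and the dual block forces $\mu=\bar{\mu}+\rho G(z,s)$. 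Plugging this relation into the first block of \eqref{eq:ge_2} recovers exactly the critical point condition above, which proves the ``only if'' direction; conversely, starting from a critical point $z^\ast(\bar{\mu},s)$ and defining $\mu:=\bar{\mu}+\rho G(z^\ast,s)$ produces a pair satisfying both blocks of \eqref{eq:ge_2}.

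There is no genuine obstacle here: the whole content of the lemma is that the second component of $H_\rho$ has been cooked up so that it encodes the first-order multiplier update $\mu=\bar{\mu}+\rho G(z,s)$, and the apparent dependence on $\mu^\ast_k$ disappears through the definition of $d_\rho$. The only point requiring a small remark is the observation that $\Ncal_{\Zcal\times\Rset^m}$ decomposes block-wise, so that the dual block of the inclusion is actually an equality.
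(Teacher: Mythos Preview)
Your proposal is correct and follows essentially the same approach as the paper's proof: both hinge on the observation that the dual block of \eqref{eq:ge_2} is an equality that forces $\mu=\bar{\mu}+\rho G(z,s)$, after which the primal block coincides with the critical-point condition for $L_\rho(\cdot,\bar{\mu},s)+\iota_\Zcal$. Your write-up is simply more explicit than the paper's---you spell out the gradient of $L_\rho$, the product structure $\Ncal_{\Zcal\times\Rset^m}(w)=\Ncal_\Zcal(z)\times\{0\}$, and the cancellation of $\mu^\ast_k$ through $d_\rho$---whereas the paper declares one direction ``clear'' and sketches the other in two lines.
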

\begin{IEEEproof}
The necessary condition is clear.~To prove the sufficient condition, assume that $w^\ast\left(d_\rho\left(\bar{\mu}\right),s\right)=\left(z^\ast\left(d_\rho\left(\bar{\mu}\right),s\right)^{\Trans},\mu^\ast\left(d_\rho\left(\bar{\mu}\right),s\right)^{\Trans}\right)^{\Trans}$ satisfies \eqref{eq:ge_2}.~The second half of \eqref{eq:ge_2} implies that $\mu^\ast\left(d_\rho\left(\bar{\mu}\right),s\right)=\bar{\mu}+\rho G\left(z^\ast\left(d_\rho\left(\bar{\mu}\right),s\right),s\right)$.~Putting this expression in the first part of \eqref{eq:ge_2}, one obtains that $z^\ast\left(d_\rho\left(\bar{\mu}\right),s\right)$ is a critical point of $L_\rho\left(\cdot,\bar{\mu},s\right)+\iota_\Zcal\left(\cdot\right)$.
\end{IEEEproof}
\looseness-1In the sequel, a primal-dual point satisfying\ \eqref{eq:ge_2} is denoted by $w^{\ast}\left(d_\rho\left(\bar{\mu}\right),s\right)$ or $w^{\ast}\left(\bar{\mu},s\right)$ without distinction. 

\looseness-1As $z^\infty\left(\bar{\mu}_k,s_{k+1}\right)$ is a critical point of $L_\rho\left(\cdot,\bar{\mu}_k,s_{k+1}\right)+\iota_\Zcal\left(\cdot\right)$, one can define 
\begin{align}
\label{eq:def_w_inf}
w^\infty\left(d_\rho(\bar{\mu}_k),s_{k+1}\right):=\begin{bmatrix}
z^\infty\left(\bar{\mu}_k,s_{k+1}\right)\\
\bar{\mu}_k+\rho G\left(z^\infty(\bar{\mu}_k,s_{k+1}),s_{k+1}\right)
\end{bmatrix}\enspace,
\end{align}
which satisfies\ \eqref{eq:ge_2}.~Note that the generalised equation\ \eqref{eq:ge_2} is parametric in $s$ and $d_\rho(\cdot)$, which represents a normalised distance between a dual variable and an optimal dual variable at time $k$.~Assuming that the penalty parameter $\rho$ is well-chosen, the generalised equation\ \eqref{eq:ge_2} can be proven to be strongly regular at a given solution.
\begin{lem}[Strong regularity of\ \eqref{eq:ge_2}]
There exists $\tilde{\rho}>0$ such that for all $\rho>\tilde{\rho}$ and $k\geq0$,\ \eqref{eq:ge_2} is strongly regular at $w^\ast_k=w^\ast\left(0,s_k\right)$. 
\end{lem}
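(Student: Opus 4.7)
The plan is to view \eqref{eq:ge_2} as a $1/\rho$--perturbation of the original generalised equation \eqref{eq:ge_1} at the same base point, and then invoke the classical fact that strong regularity is stable under small smooth perturbations.

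First I would check that $w^\ast_k = w^\ast(0,s_k)$ is indeed a solution of \eqref{eq:ge_2} when $\bar{\mu}=\mu^\ast_k$: in that case $d_\rho(\bar{\mu})=0$ and the last block of $H_\rho$ reduces to $G(z^\ast_k,s_k)+(\mu^\ast_k-\mu^\ast_k)/\rho = 0$, while the first block is exactly the stationarity part of \eqref{eq:crit_pt}. So $w^\ast_k$ solves \eqref{eq:ge_2} and the relevant linearisation is the one around this point.

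Next I would compute the Jacobian
\[
\nabla_w H_\rho\left(w^\ast_k,0,s_k\right) = \nabla_w F\left(w^\ast_k,s_k\right) + \begin{bmatrix} 0 & 0 \\ 0 & -\tfrac{1}{\rho}I \end{bmatrix},
\]
so the linearised map differs from that of \eqref{eq:ge_1} at $w^\ast_k$ only by an additive $O(1/\rho)$ block. Assumption \ref{ass:strg_reg} says the linearised generalised equation
\[
r \in F(w^\ast_k,s_k) + \nabla_w F(w^\ast_k,s_k)(w_r - w^\ast_k) + \Ncal_{\Zcal\times\Rset^m}(w_r)
\]
has a single-valued Lipschitz localisation around $(0,w^\ast_k)$. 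I would then apply the standard perturbation theorem for strongly regular generalised equations (Robinson \cite{robin1980}, or equivalently the implicit-function-style result of Dontchev--Rockafellar): if $A$ is such that the equation $r\in q + A(w-w^\ast_k) + \Ncal(w)$ is strongly regular, then the same holds for any $A'$ with $\|A'-A\|$ smaller than the inverse of the Lipschitz modulus, with a nearby Lipschitz constant. Choosing $\rho$ large enough that $1/\rho$ is below that threshold yields strong regularity of \eqref{eq:ge_2} at $w^\ast_k$.

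The main obstacle is obtaining a single threshold $\tilde{\rho}$ that works uniformly in $k$: the perturbation tolerance above depends on the modulus $\lambda_A$ of strong regularity, and a priori this modulus could deteriorate as $s_k$ ranges over $\Scal$. I would handle this exactly as Theorem \ref{theo:strg_reg_th} does, using the compactness of $\Scal$ and the continuity of $\nabla_w F$ to extract uniform radii and a uniform modulus, so that a single $\tilde{\rho}$ depending only on $\lambda_A$ and $\|\nabla_w F\|$ on the compact set of interest suffices. One mild subtlety is that the perturbation affects only the dual block, so the implicit-function argument really needs the full-rank/second-order conditions already implied by strong regularity of \eqref{eq:ge_1}; no new qualification hypothesis is required.
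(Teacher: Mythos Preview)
Your argument is correct. The paper's own proof is a one-liner that takes a different, more structure-specific route: it invokes Robinson's polyhedral reduction \cite{robin1980} to translate strong regularity of \eqref{eq:ge_2} at $w^\ast_k$ into a matrix-level condition (LICQ plus strong second-order sufficiency for the perturbed KKT system), and then appeals to Proposition~2.4 in Bertsekas \cite{bert1982}, which is precisely the classical statement that the augmented Lagrangian KKT system inherits these conditions from the original problem once $\rho$ exceeds a threshold. Your approach instead treats \eqref{eq:ge_2} abstractly as a $1/\rho$-perturbation of \eqref{eq:ge_1} at the linearisation level and applies the Lipschitz-stability of strong regularity directly. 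This is cleaner and, notably, does not rely on $\Zcal$ being polyhedral, so it is strictly more general than what the paper needs; the paper's route, on the other hand, connects the threshold $\tilde{\rho}$ explicitly to the second-order data via the Bertsekas result, which is useful if one wants quantitative estimates. Your handling of uniformity in $k$ is fine: the paper already postulates uniform radii and modulus $\lambda_A$ in Theorem~\ref{theo:strg_reg_th} (see the remark following it), so you may simply cite that rather than arguing compactness of $\Scal$, which the paper never actually assumes.
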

\begin{IEEEproof}
As $\Zcal$ is polyhedral, this follows from the reduction procedure described in\ \cite{robin1980}, the arguments developed in Proposition $2.4$ in\ \cite{bert1982} and strong regularity of \eqref{eq:ge_1} for all $k\geq0$.
\end{IEEEproof}
\begin{ass}
\label{ass:rho}
The penalty parameter satisfies $\rho>\tilde{\rho}$.
\end{ass}
From the strong regularity of\ \eqref{eq:ge_2} at $w^\ast_k$, using Theorem $2.1$ in\ \cite{robin1980}, one obtains the following local Lipschitz property of a solution $w\left(\cdot\right)$ to \eqref{eq:ge_2}. 
\begin{lem}
\label{lem:str_reg_2}
There exists radii $\delta_B>0$, $r_B>0$ and $q_B>0$ such that for all $k\in\Nset$,
\begin{align}
\forall d\in\Bcal\left(0,q_B\right),&\forall s\in\Bcal\left(s_k,r_B\right),\exists!w^{\ast}(d,s)\in\Bcal\left(w^{\ast}_k,\delta_B\right),\nonumber\\
&0\in H_\rho(w^{\ast}(d,s),d,s)+\Ncal_{\Zcal\times\Rset^m}\left(w^{\ast}(d,s)\right)\nonumber
\end{align}
and for all $d,d'\in\Bcal\left(0,q_B\right)$ and all $s,s'\in\Bcal\left(s_k,r_B\right)$,
\begin{align}
&\left\|w^{\ast}(d,s)-w^{\ast}(d',s')\right\|_2\leq\nonumber\\
&~~~~~~~~~\lambda_B\left\|H_\rho\left(w^\ast(d',s'),d,s\right)-H_\rho\left(w^\ast(d',s'),d',s'\right)\right\|_2\enspace,\nonumber
\end{align}
where $\lambda_B>0$ is a Lipschitz constant associated with\ \eqref{eq:ge_2}.
\end{lem}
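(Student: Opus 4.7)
The plan is to derive this lemma as a direct refinement of Robinson's implicit-function-type theorem for generalised equations, mirroring closely the way in which Theorem \ref{theo:strg_reg_th} was obtained from the strong regularity of \eqref{eq:ge_1}. The starting point is the previous lemma, which asserts that, under Assumption \ref{ass:rho}, the generalised equation \eqref{eq:ge_2} is strongly regular at $w^\ast_k = w^\ast(0,s_k)$ for every $k\geq 0$. I would then view \eqref{eq:ge_2} as a smooth parametric generalised equation in the joint parameter $(d,s)$, with reference value $(0,s_k)$, so that Theorem $2.1$ in \cite{robin1980} applies directly at every $k$.

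Concretely, I first fix $k$. Robinson's theorem yields strictly positive radii $\delta_B(k), r_B(k), q_B(k)$ and a Lipschitz modulus $\lambda_B(k)>0$ such that, for every $(d,s)\in\Bcal(0,q_B(k))\times\Bcal(s_k,r_B(k))$, there is a unique $w^\ast(d,s)\in\Bcal(w^\ast_k,\delta_B(k))$ solving \eqref{eq:ge_2}, with the inverse solution map depending Lipschitz-continuously on the canonical perturbation $r$ of the linearised inclusion at $w^\ast_k$. The second step is to translate this canonical Robinson estimate into the $(d,s)$-form stated in the lemma. For that, I would use the standard observation that
\begin{align}
\nonumber
H_\rho(w,d,s) = H_\rho(w,d',s') + \bigl[H_\rho(w,d,s)-H_\rho(w,d',s')\bigr],
\end{align}
evaluate this identity at $w = w^\ast(d',s')$, and read the bracketed term as an admissible canonical perturbation; the continuous differentiability of $H_\rho$ in $(w,d,s)$ ensures that it stays in the Robinson ball when $(d,s),(d',s')$ are close to $(0,s_k)$.

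The third and most delicate step is to make the radii $\delta_B, r_B, q_B$ and the Lipschitz constant $\lambda_B$ independent of $k$, as was already taken for granted in the remark following Theorem \ref{theo:strg_reg_th}. My plan is to argue that the Robinson radii and modulus depend continuously on the data of the linearised inclusion, namely on $(w^\ast_k, s_k)$ and on $\nabla_w H_\rho(w^\ast_k, 0, s_k)$, both of which vary continuously in the reference point. The points $(w^\ast_k,s_k)$ lie in a compact set: $\Zcal$ is a bounded box by problem setup, $\Scal$ can be taken bounded, and the multipliers $\mu^\ast_k$ are bounded because strong regularity of \eqref{eq:ge_1} combined with Assumption \ref{ass:prm_diff} confines the whole primal-dual trajectory to a bounded tube. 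Taking infima of the pointwise radii and the supremum of the pointwise Lipschitz constants over this compact set yields positive uniform constants $\delta_B, r_B, q_B, \lambda_B$.

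The main obstacle I anticipate is precisely this uniformisation in $k$: Robinson's theorem is a local statement whose constants are given only implicitly, so extracting a uniform lower bound requires the continuous-dependence argument sketched above together with a genuine compactness argument for $\{(w^\ast_k,s_k)\}_{k\geq 0}$. The remaining ingredients — the Lipschitz rewriting of the perturbation and the application of Robinson's estimate — are essentially bookkeeping once the uniform constants are in hand.
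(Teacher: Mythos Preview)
Your proposal is correct and follows exactly the route the paper takes: the paper does not give a separate proof of this lemma but simply states, in the sentence preceding it, that it follows from the strong regularity of \eqref{eq:ge_2} at $w^\ast_k$ together with Theorem~$2.1$ in \cite{robin1980}. Your third step on uniformisation in $k$ actually goes beyond the paper, which (as in the remark after Theorem~\ref{theo:strg_reg_th}) treats the $k$-independence of the radii as an implicit standing assumption rather than something to be derived.
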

Note that, given $w\in\Zcal\times\Rset^m$, $d,d'\in\Rset^m$ and $s,s'\in\Scal$, one can write
\begin{align}
H_\rho\left(w,d,s\right)-H_\rho\left(w,d',s'\right)=F(w,s)-F(w,s')+\begin{bmatrix}
0\\
d-d'
\end{bmatrix}\enspace,\nonumber
\end{align}
which, from Lemma\ \ref{lem:lip_map}, implies the following Lemma.
\begin{lem}
\label{lem:glob_lip_H}
There exists $\lambda_H>0$ such that for all $w\in\Zcal\times\Rset^m$, for all $d,d'\in\Rset^m$ and all $s,s'\in\Rset^m$, 
\begin{align}
\label{eq:glob_lip_H}
\left\|H_\rho\left(w,d,s\right)-H_\rho\left(w,d',s'\right)\right\|_2\leq\lambda_H\left\|\begin{pmatrix}
d\\ s
\end{pmatrix}-\begin{pmatrix}
d'\\ s'
\end{pmatrix}\right\|_2\enspace.
\end{align}
\end{lem}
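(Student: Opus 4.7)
The proof reduces almost entirely to the identity already displayed just before the lemma, so my plan is to verify that identity carefully and then bound each of the two resulting pieces.

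First, I would expand $H_\rho(w,d,s) - H_\rho(w,d',s')$ componentwise using the definition of $H_\rho$. The first block involves $\nabla_z J(z)$, which does not depend on $(d,s)$ and hence cancels, leaving $\nabla_z G(z,s)^{\Trans}\mu - \nabla_z G(z,s')^{\Trans}\mu$. The second block involves $G(z,s)+d+\tfrac{\mu^{\ast}_k-\mu}{\rho}$, whose $\mu$-dependent and $\mu^{\ast}_k/\rho$ terms cancel, leaving $G(z,s)-G(z,s')+(d-d')$. Matching this against $F(w,s)-F(w,s')$ from the proof of Lemma \ref{lem:lip_map} gives exactly
\begin{align}
H_\rho(w,d,s) - H_\rho(w,d',s') = F(w,s)-F(w,s') + \begin{bmatrix} 0 \\ d-d' \end{bmatrix}\enspace.\nonumber
\end{align}

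Next I would take $\|\cdot\|_2$ on both sides, apply the triangle inequality, and invoke Lemma \ref{lem:lip_map} to bound $\|F(w,s)-F(w,s')\|_2 \leq \lambda_F \|s-s'\|_2$; the second summand contributes exactly $\|d-d'\|_2$. Combining by Cauchy--Schwarz on $(\lambda_F,1)$ against $(\|s-s'\|_2,\|d-d'\|_2)$ yields
\begin{align}
\lambda_F\|s-s'\|_2 + \|d-d'\|_2 \leq \sqrt{\lambda_F^2+1}\,\left\|\begin{pmatrix} d-d' \\ s-s' \end{pmatrix}\right\|_2\enspace,\nonumber
\end{align}
so the lemma holds with $\lambda_H := \sqrt{\lambda_F^2+1}$, independent of $w$, $\rho$ and $k$.

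There is no real obstacle here: the key point is that the only $\mu$-dependent parts of $H_\rho$ (namely $\nabla_z G(z,s)^{\Trans}\mu$ in the first block and $-\mu/\rho$ in the second) either cancel between the two arguments or do not involve $s$ and $d$, so the unboundedness of the dual coordinate in $w \in \Zcal \times \Rset^m$ is harmless. The constant $\lambda_H$ inherits only the geometric data encoded in $\lambda_F = P\cdot\max\{\|T_1\|_2,\ldots,\|T_P\|_2\}$, which is why the bound is global in $w$ and uniform in the time index $k$.
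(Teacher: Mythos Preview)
Your proof is correct and follows the same route as the paper: exploit the displayed identity $H_\rho(w,d,s)-H_\rho(w,d',s')=F(w,s)-F(w,s')+\begin{pmatrix}0\\ d-d'\end{pmatrix}$ and then bound via Lemma~\ref{lem:lip_map}. The only cosmetic difference is in the final step: you use the triangle inequality followed by Cauchy--Schwarz to get $\lambda_H=\sqrt{\lambda_F^2+1}$, whereas the paper expands the squared norm and applies AM--GM to the cross term, arriving at $\lambda_H=\sqrt{\max\{\lambda_F^2,1\}+\lambda_F}$; your constant is in fact never larger, so nothing is lost.
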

\begin{IEEEproof}
After straightforward calculations, one obtains the Lipschitz property with  
\begin{align}
\lambda_H:=\sqrt{\max\big\{\lambda_F^2,1\big\}+\lambda_F}\enspace.\nonumber
\end{align}
\end{IEEEproof}

\subsection{Derivation of the contraction inequality}
\label{subsec:stab_ana}

In this paragraph, it is proven that under some conditions, which are made explicit in the sequel, the optimality tracking error $\left\|\bar{w}_k-w^\ast_k\right\|_2$ of Algorithm\ \ref{algo:splt_trck} remains within a pre-specified bound if the parameter $s_k$ varies sufficiently slowly over time.

First, note that given a sub-optimal primal-dual solution $\bar{w}_{k+1}$ and a critical point $w^\ast_{k+1}$, 
\begin{align}
\label{eq:stb_ineq}
\left\|\bar{w}_{k+1}-w^{\ast}_{k+1}\right\|_2\leq&\left\|\bar{w}_{k+1}-w^\infty\left(d_\rho\left(\bar{\mu}_k\right),s_{k+1}\right)\right\|_2\\
&+\left\|w^\infty\left(d_\rho\left(\bar{\mu}_k\right),s_{k+1}\right)-w^\ast_{k+1}\right\|_2\enspace,\nonumber
\end{align}
where $w^\infty\left(d_\rho(\bar{\mu}_k),s_{k+1}\right)$ has been defined in\ \eqref{eq:def_w_inf}.~The analysis then consists in bounding the two right hand side terms in\ \eqref{eq:stb_ineq}.~The first one can be upper-bounded by applying strong regularity of\ \eqref{eq:ge_2} and the second one using the convergence rate of the primal loop in Algorithm\ \ref{algo:splt_trck}. 
\begin{lem}
\label{lem:fir_term}
If $\left\|s_{k+1}-s_k\right\|_2$ satisfies 
\begin{align}
\label{eq:hypo_ds_1}
\left\|s_{k+1}-s_k\right\|_2<\min\left\{r_B,\displaystyle\frac{q_B\rho}{\lambda_A\lambda_F}\right\}\enspace,
\end{align}
where $r_B$ and $q_B$ have been defined in Lemma\ \ref{lem:str_reg_2}, and $\left\|\bar{w}_k-w^\ast_k\right\|_2<q_B\rho$, then,  
\begin{align}
&\left\|w^\infty\left(d_\rho\left(\bar{\mu}_k\right),s_{k+1}\right)-w^\ast_{k+1}\right\|_2\leq\nonumber\\
&~~~~~~~~~~~~~\displaystyle\frac{\lambda_B\lambda_H}{\rho}\big(\left\|\bar{w}_k-w^\ast_k\right\|_2+\lambda_A\lambda_F\left\|s_{k+1}-s_k\right\|_2\big)\enspace.\nonumber
\end{align}
\end{lem}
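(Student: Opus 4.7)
\smallskip
\noindent\textbf{Proof proposal.} The plan is to interpret both primal-dual points appearing in the bound as solutions of the auxiliary generalised equation \eqref{eq:ge_2} at the reference dual $\mu^\ast_k$, but for different parameter pairs $(d,s)$, and then invoke the strong regularity bound of Lemma \ref{lem:str_reg_2}.

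First I would identify the pair $(d,s)$ for which $w^\ast_{k+1}$ solves \eqref{eq:ge_2}. Since $G(z^\ast_{k+1},s_{k+1})=0$, the second block of $H_\rho$ evaluated at $w^\ast_{k+1}$ with $s=s_{k+1}$ vanishes only when $d=d_\rho(\mu^\ast_{k+1})=(\mu^\ast_{k+1}-\mu^\ast_k)/\rho$. The first block then reduces to the stationarity condition of $w^\ast_{k+1}$ for the original NLP, which holds by definition. Hence $w^\ast_{k+1}=w^\ast\left(d_\rho(\mu^\ast_{k+1}),s_{k+1}\right)$ in the notation of Lemma \ref{lem:str_reg_2}, whereas $w^\infty\left(d_\rho(\bar{\mu}_k),s_{k+1}\right)=w^\ast\left(d_\rho(\bar{\mu}_k),s_{k+1}\right)$ by \eqref{eq:def_w_inf}.

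Next, I would verify that both $(d_\rho(\bar{\mu}_k),s_{k+1})$ and $(d_\rho(\mu^\ast_{k+1}),s_{k+1})$ lie in the strong regularity neighbourhood $\Bcal(0,q_B)\times\Bcal(s_k,r_B)$ guaranteed by Lemma \ref{lem:str_reg_2}. The $s$-condition follows directly from $\|s_{k+1}-s_k\|_2<r_B$. For the first dual parameter, $\|d_\rho(\bar{\mu}_k)\|_2\leq\|\bar{\mu}_k-\mu^\ast_k\|_2/\rho\leq\|\bar{w}_k-w^\ast_k\|_2/\rho<q_B$ by hypothesis. For the second, I would apply Theorem \ref{theo:strg_reg_th} (whose applicability is ensured by Assumption \ref{ass:prm_diff}) together with Lemma \ref{lem:lip_map} to obtain
\begin{align}
\|\mu^\ast_{k+1}-\mu^\ast_k\|_2\leq\|w^\ast_{k+1}-w^\ast_k\|_2\leq\lambda_A\lambda_F\|s_{k+1}-s_k\|_2,\nonumber
\end{align}
so that $\|d_\rho(\mu^\ast_{k+1})\|_2\leq\lambda_A\lambda_F\|s_{k+1}-s_k\|_2/\rho<q_B$ thanks to the second part of the assumption \eqref{eq:hypo_ds_1} on $\|s_{k+1}-s_k\|_2$.

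Finally I would apply Lemma \ref{lem:str_reg_2} with parameter pairs $(d_\rho(\bar{\mu}_k),s_{k+1})$ and $(d_\rho(\mu^\ast_{k+1}),s_{k+1})$, bound the resulting right-hand side via Lemma \ref{lem:glob_lip_H} (where the $s$-components cancel since they coincide), and obtain
\begin{align}
\|w^\infty(d_\rho(\bar{\mu}_k),s_{k+1})-w^\ast_{k+1}\|_2\leq\frac{\lambda_B\lambda_H}{\rho}\|\bar{\mu}_k-\mu^\ast_{k+1}\|_2.\nonumber
\end{align}
Closing the argument requires a single triangle inequality $\|\bar{\mu}_k-\mu^\ast_{k+1}\|_2\leq\|\bar{\mu}_k-\mu^\ast_k\|_2+\|\mu^\ast_k-\mu^\ast_{k+1}\|_2\leq\|\bar{w}_k-w^\ast_k\|_2+\lambda_A\lambda_F\|s_{k+1}-s_k\|_2$, yielding the claimed inequality. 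The main conceptual obstacle is really the very first step, the identification of $w^\ast_{k+1}$ as the solution of \eqref{eq:ge_2} at the shifted dual parameter $d_\rho(\mu^\ast_{k+1})$; once this is done, the rest is bookkeeping of Lipschitz constants and verification that the data stays inside the strong-regularity tube.
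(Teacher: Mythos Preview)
Your proposal is correct and follows essentially the same route as the paper's proof: identify $w^\ast_{k+1}$ as the solution of \eqref{eq:ge_2} at $(d_\rho(\mu^\ast_{k+1}),s_{k+1})$, check that both dual parameters lie in $\Bcal(0,q_B)$ via the hypotheses and Theorem~\ref{theo:strg_reg_th} with Lemma~\ref{lem:lip_map}, apply Lemmas~\ref{lem:str_reg_2} and~\ref{lem:glob_lip_H}, and finish with the same triangle inequality. The only cosmetic difference is that you make the reliance on Assumption~\ref{ass:prm_diff} (needed to invoke Theorem~\ref{theo:strg_reg_th}) explicit, whereas the paper uses it tacitly.
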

\begin{IEEEproof}
Note that $w^\ast_{k+1}$ can be rewritten $w^\ast_{k+1}=w^\ast\left(d_\rho\left(\mu^\ast_{k+1}\right),s_{k+1}\right)$, which is a solution to\ \eqref{eq:ge_2} at $s_{k+1}$.
\begin{align}
\left\|d_\rho\left(\mu^\ast_{k+1}\right)\right\|_2&=\displaystyle\frac{\left\|\mu^\ast_{k+1}-\mu^\ast_k\right\|_2}{\rho}\nonumber\\
&\leq\displaystyle\frac{\lambda_F\lambda_A}{\rho}\left\|s_{k+1}-s_k\right\|_2<q_B\enspace,\nonumber
\end{align}
by applying Theorem\ \ref{theo:strg_reg_th}, Lemma\ \ref{lem:lip_map} and from hypothesis\ \eqref{eq:hypo_ds_1}. Moreover,
\begin{align}
\left\|d_\rho(\bar{\mu}_k)\right\|_2&=\displaystyle\frac{\left\|\bar{\mu}_k-\mu^\ast_k\right\|_2}{\rho}\nonumber\\
&\leq\displaystyle\frac{\left\|\bar{w}_k-w^\ast_k\right\|_2}{\rho}<q_B\enspace.\nonumber
\end{align}
Now, as $\left\|s_{k+1}-s_k\right\|_2<r_B$ one can apply Lemmas\ \ref{lem:str_reg_2} and\ \ref{lem:glob_lip_H} to obtain
\begin{align}
&\left\|w^\infty\left(\bar{\mu}_k,s_{k+1}\right)-w^\ast_{k+1}\right\|_2\nonumber\\
&~~~~~~~~~~~~\leq\lambda_B\lambda_H\left\|d_\rho\left(\bar{\mu}_k\right)-d_\rho\left(\mu^\ast_{k+1}\right)\right\|_2\nonumber\\
&~~~~~~~~~~~~\leq\displaystyle\frac{\lambda_B\lambda_H}{\rho}\left(\left\|\bar{\mu}_k-\mu^\ast_k\right\|_2+\left\|\mu^\ast_{k+1}-\mu^\ast_k\right\|_2\right)\nonumber\\
&~~~~~~~~~~~~\leq\displaystyle\frac{\lambda_B\lambda_H}{\rho}\big(\big\|\bar{w}_k-w^\ast_k\big\|_2+\lambda_A\lambda_F\big\|s_{k+1}-s_k\big\|_2\big)\enspace,\nonumber
\end{align}
by Theorem \ref{theo:strg_reg_th}.
\end{IEEEproof}
In the following Lemma, using the convergence rate estimate presented in Section\ \ref{sec:theo_tools}, we derive a bound on the first summand $\left\|\bar{w}_{k+1}-w^\infty(d_\rho(\bar{\mu}_k),s_{k+1})\right\|_2$.
\begin{lem}
\label{lem:sec_term}
If $\left\|s_{k+1}-s_k\right\|_2<r_B$, $\left\|\bar{w}_k-w^\ast_k\right\|_2<q_B\rho$ and
\begin{align}
\big(1+\displaystyle\frac{\lambda_H\lambda_B}{\rho}\big)\left\|\bar{w}_k-w^\ast_k\right\|_2+\lambda_H\lambda_Br_B<\delta\enspace,\nonumber
\end{align}
where $\delta$ has been defined in Corollary\ \ref{cor:kl_al}, then  
\begin{align}
\label{eq:bnd_sec}
&\big\|\bar{w}_{k+1}-w^\infty\left(d_\rho\left(\bar{\mu}_k\right),s_{k+1}\right)\big\|_2\leq\\ 
&C\left(1+{\rho}\lambda_g\right)M^{-\psi\left(d_L,n_z\right)}\Big(\lambda_B\lambda_H\left\|s_{k+1}-s_k\right\|_2\nonumber\\
&~~~~~~~~~~~~~~~~~~~~~~~~~~~~~~~~~~~~+\big\|\bar{w}_k-w^\ast_k\big\|_2\Big(1+\displaystyle\frac{\lambda_B\lambda_H}{\rho}\Big)\Big)\enspace,\nonumber 
\end{align}
where $\lambda_G>0$ is the Lipschitz constant of $G(\cdot,s)$ on $\Zcal$ (well-defined as $\Zcal$ is bounded). 
\end{lem}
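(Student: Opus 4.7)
The plan is to split $\|\bar{w}_{k+1}-w^{\infty}(d_\rho(\bar{\mu}_k),s_{k+1})\|_2$ into its primal and dual components, estimate the primal error using the asymptotic convergence rate of Lemma \ref{lem:cv_rate}, and estimate the dual error by expressing the dual update in closed form. By construction, $\bar{\mu}(s_{k+1})=\bar{\mu}_k+\rho G(\bar{z}(s_{k+1}),s_{k+1})$ while the dual component of $w^{\infty}$ equals $\bar{\mu}_k+\rho G(z^{\infty}(\bar{\mu}_k,s_{k+1}),s_{k+1})$, so using the Lipschitz continuity of $G(\cdot,s_{k+1})$ on the bounded set $\Zcal$ with constant $\lambda_g$ gives
\begin{align}
\big\|\bar{w}_{k+1}-w^{\infty}(d_\rho(\bar{\mu}_k),s_{k+1})\big\|_2\leq(1+\rho\lambda_g)\big\|\bar{z}(s_{k+1})-z^{\infty}(\bar{\mu}_k,s_{k+1})\big\|_2.\nonumber
\end{align}

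Next I would invoke Lemma \ref{lem:cv_rate} to bound the remaining primal term by $C M^{-\psi(d_L,n_z)}\|\bar{z}(s_k)-z^{\infty}(\bar{\mu}_k,s_{k+1})\|_2$. To legitimately apply that Lemma, the initial primal iterate $\bar{z}(s_k)$ must lie in the KL ball $\Bcal(z^{\infty}(\bar{\mu}_k,s_{k+1}),\delta)$ supplied by Corollary \ref{cor:kl_al}. I would verify this by inserting the intermediate point $w^\ast_k$ and bounding
\begin{align}
\big\|\bar{z}(s_k)-z^{\infty}(\bar{\mu}_k,s_{k+1})\big\|_2\leq\big\|\bar{w}_k-w^\ast_k\big\|_2+\big\|w^\ast_k-w^{\infty}(d_\rho(\bar{\mu}_k),s_{k+1})\big\|_2.\nonumber
\end{align}
Because $w^\ast_k=w^\ast(0,s_k)$ and $w^{\infty}(d_\rho(\bar{\mu}_k),s_{k+1})$ both solve the generalised equation \eqref{eq:ge_2} (at parameters $(0,s_k)$ and $(d_\rho(\bar{\mu}_k),s_{k+1})$ respectively), strong regularity (Lemma \ref{lem:str_reg_2}) together with the global Lipschitz estimate of Lemma \ref{lem:glob_lip_H} yields
\begin{align}
\big\|w^\ast_k-w^{\infty}(d_\rho(\bar{\mu}_k),s_{k+1})\big\|_2\leq\lambda_B\lambda_H\Big(\tfrac{\|\bar{w}_k-w^\ast_k\|_2}{\rho}+\|s_{k+1}-s_k\|_2\Big),\nonumber
\end{align}
exactly as in the proof of Lemma \ref{lem:fir_term}. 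The hypotheses $\|\bar{w}_k-w^\ast_k\|_2<q_B\rho$ and $\|s_{k+1}-s_k\|_2<r_B$ guarantee the parameters $(d_\rho(\bar{\mu}_k),s_{k+1})$ lie in the strong regularity neighbourhood around $(0,s_k)$, and the additional smallness assumption $(1+\lambda_H\lambda_B/\rho)\|\bar{w}_k-w^\ast_k\|_2+\lambda_H\lambda_B r_B<\delta$ ensures the resulting estimate keeps $\bar{z}(s_k)$ strictly inside the KL ball around $z^{\infty}$.

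Finally, combining the three estimates gives
\begin{align}
\big\|\bar{w}_{k+1}-w^{\infty}(d_\rho(\bar{\mu}_k),s_{k+1})\big\|_2\leq C(1+\rho\lambda_g)M^{-\psi(d_L,n_z)}\Big(\big(1+\tfrac{\lambda_B\lambda_H}{\rho}\big)\big\|\bar{w}_k-w^\ast_k\big\|_2+\lambda_B\lambda_H\|s_{k+1}-s_k\|_2\Big),\nonumber
\end{align}
which is precisely \eqref{eq:bnd_sec}. I expect the main subtle point to be checking that the KL radius hypothesis is actually sufficient to cover $\bar{z}(s_k)$; this is where the somewhat clumsy combined condition on $\|\bar{w}_k-w^\ast_k\|_2$, $r_B$ and $\delta$ in the statement is used. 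The rest is essentially a triangle-inequality bookkeeping exercise combined with the two convergence tools already developed, namely the strong regularity of the augmented-Lagrangian generalised equation \eqref{eq:ge_2} and the polynomial KL convergence rate of the proximal alternating primal loop.
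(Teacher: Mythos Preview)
Your proposal is correct and follows essentially the same route as the paper's proof: first reduce the primal-dual error to $(1+\rho\lambda_G)$ times the primal error via the explicit dual update and Lipschitz continuity of $G(\cdot,s_{k+1})$, then verify membership of $\bar{z}_k$ in the KL ball around $z^\infty(\bar{\mu}_k,s_{k+1})$ by inserting $w^\ast_k=w^\ast(0,s_k)$ and invoking strong regularity of \eqref{eq:ge_2} together with Lemma~\ref{lem:glob_lip_H}, and finally apply the convergence rate of Lemma~\ref{lem:cv_rate} and reuse the same chain of inequalities. The only cosmetic difference is that the paper bounds $\|z^\ast(0,s_k)-z^\infty(\bar{\mu}_k,s_{k+1})\|_2$ directly, whereas you bound the larger quantity $\|w^\ast_k-w^\infty(d_\rho(\bar{\mu}_k),s_{k+1})\|_2$; since the primal part is dominated by the full primal-dual norm, this changes nothing.
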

\begin{IEEEproof}
From Algorithm\ \ref{algo:splt_trck}, it follows that
\begin{align}
&\left\|\bar{w}_{k+1}-w^\infty\left(d_\rho\left(\bar{\mu}_k\right),s_{k+1}\right)\right\|_2\nonumber\\
&\leq\left\|\begin{pmatrix}
\bar{z}_{k+1}-z^\infty\left(\bar{\mu}_k,s_{k+1}\right)\\
\rho\left(G\left(\bar{z}_{k+1},s_{k+1}\right)-G\left(z^\infty\left(\bar{\mu}_k,s_{k+1}\right),s_{k+1}\right)\right)
\end{pmatrix}\right\|_2\nonumber\\
&\leq\left(1+{\rho}\lambda_G\right)\left\|\bar{z}_{k+1}-z^\infty\left(\bar{\mu}_k,s_{k+1}\right)\right\|_2\enspace.\nonumber
\end{align}
In order to apply Lemma\ \ref{lem:cv_rate}, one first need to show that $\bar{z}_k$ lies in the ball $\Bcal\big(z^\infty(\bar{\mu}_k,s_{k+1}),\delta\big)$, where $\delta$ is the radius involved in the KL property.
\begin{align}
\label{eq:ineqs_ball}
&\big\|\bar{z}_k-z^\infty(\bar{\mu}_k,s_{k+1})\big\|_2\leq\\
&~~~~~\big\|\bar{z}_k-z^\ast(0,s_k)\big\|_2+\big\|z^\ast(0,s_k)-z^\infty(\bar{\mu}_k,s_{k+1})\big\|_2\nonumber\\
&~~~~~\leq\big\|\bar{w}_k-w^\ast_k\big\|_2+\lambda_H\lambda_B\big(\left\|d_\rho(\bar{\mu}_k)\right\|_2+\left\|s_{k+1}-s_k\right\|_2\big)\nonumber\\
&~~~~~\leq\left(1+\displaystyle\frac{\lambda_H\lambda_B}{\rho}\right)\left\|\bar{w}_k-w^\ast_k\right\|_2+\lambda_H\lambda_B\left\|s_{k+1}-s_k\right\|_2\nonumber\\
&~~~~~<\delta\enspace,\nonumber
\end{align}
where the second step follows from strong regularity of\ \eqref{eq:ge_2} at $w^\ast(0,s_k)$ and the hypotheses mentioned above. Thus one can use the R-convergence rate estimate of Lemma \ref{lem:cv_rate} and apply the inequalities in\ \eqref{eq:ineqs_ball} to obtain\ \eqref{eq:bnd_sec}. 
\end{IEEEproof}
Gathering the results of Lemmas\ \ref{lem:fir_term} and\ \ref{lem:sec_term}, one can state the following theorem, which upper-bounds the sub-optimality error at time $k+1$ by a linear combination of the sub-optimality error at time $k$ and the magnitude of the parameter difference.
\begin{theo}[\label{th:wk_con}Contraction]
Given a time instant $k$, if the primal-dual error $\left\|\bar{w}_k-w^\ast_k\right\|_2$, the number of primal iterations $M$, the penalty parameter $\rho$ and the parameter difference $\left\|s_{k+1}-s_k\right\|_2$ satisfy
\begin{itemize}
\item $\left\|s_{k+1}-s_k\right\|_2<\min\left\{r_A,r_B,\displaystyle\frac{q_B\rho}{\lambda_A\lambda_F}\right\}\enspace,$
\item $\left\|\bar{w}_k-w^\ast_k\right\|_2<q_B\rho\enspace,$
\item $\rho>\tilde{\rho}\enspace,$
\item\begin{align}
\label{eq:hyp_kl_rad}
\hspace{-0.4cm}\left(1+\displaystyle\frac{\lambda_H\lambda_B}{\rho}\right)\left\|\bar{w}_k-w^\ast_k\right\|_2+\lambda_H\lambda_B\left\|s_{k+1}-s_k\right\|_2<\delta\enspace,
\end{align}  
\end{itemize}
then the following weak contraction is satisfied for all time instants $k\geq 0$:
\begin{align}
\label{eq:wk_con}
\left\|\bar{w}_{k+1}-w^\ast_{k+1}\right\|_2\leq&~\beta_w\left(\rho,M\right)\left\|\bar{w}_k-w^\ast_k\right\|_2\\
&+\beta_s\left(\rho,M\right)\left\|s_{k+1}-s_k\right\|_2\enspace,\nonumber
\end{align} where
\begin{align}
\label{eq:beta_w}
\beta_w\left(\rho,M\right):=&~C\left(1+\rho\lambda_G\right)\left(1+\displaystyle\frac{\lambda_B\lambda_H}{\rho}\right)M^{-\psi\left(d_L,n_z\right)}\nonumber\\
&+\displaystyle\frac{\lambda_B\lambda_H}{\rho}\enspace,
\end{align} and
\begin{align}
\label{eq:beta_s}
\beta_s\left(\rho,M\right):=&~C\left(1+\rho\lambda_G\right)\lambda_B\lambda_HM^{-\psi\left(d_L,n_z\right)}+\displaystyle\frac{\lambda_B\lambda_H\lambda_A\lambda_F}{\rho}\enspace.
\end{align}
\end{theo}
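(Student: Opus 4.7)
The proof is essentially a bookkeeping exercise that stitches together Lemmas \ref{lem:fir_term} and \ref{lem:sec_term} via the triangle inequality \eqref{eq:stb_ineq}. My plan is to first verify that the four hypotheses on $\|s_{k+1}-s_k\|_2$, $\|\bar{w}_k-w^\ast_k\|_2$, $\rho$ and the radius condition \eqref{eq:hyp_kl_rad} are exactly what is needed so that both lemmas apply simultaneously. In particular, Assumption \ref{ass:rho} ($\rho>\tilde{\rho}$) ensures strong regularity of \eqref{eq:ge_2}, which underlies Lemma \ref{lem:str_reg_2}; the inequality $\|s_{k+1}-s_k\|_2<\min\{r_A,r_B,q_B\rho/(\lambda_A\lambda_F)\}$ together with $\|\bar{w}_k-w^\ast_k\|_2<q_B\rho$ allows invocation of Lemma \ref{lem:fir_term} (bringing both $d_\rho(\bar{\mu}_k)$ and $d_\rho(\mu^\ast_{k+1})$ inside $\Bcal(0,q_B)$ and keeping $s_{k+1}$ inside $\Bcal(s_k,r_B)$); and the bound \eqref{eq:hyp_kl_rad} is precisely the hypothesis of Lemma \ref{lem:sec_term} that places $\bar{z}_k$ inside the KL radius ball $\Bcal\bigl(z^\infty(\bar{\mu}_k,s_{k+1}),\delta\bigr)$, enabling use of the R-convergence rate of Lemma \ref{lem:cv_rate}.

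Once the hypotheses are verified, I would split $\|\bar{w}_{k+1}-w^\ast_{k+1}\|_2$ via \eqref{eq:stb_ineq} into the ``algorithmic'' error $\|\bar{w}_{k+1}-w^\infty(d_\rho(\bar{\mu}_k),s_{k+1})\|_2$ and the ``parametric'' error $\|w^\infty(d_\rho(\bar{\mu}_k),s_{k+1})-w^\ast_{k+1}\|_2$. Lemma \ref{lem:fir_term} directly controls the parametric error by
\begin{align}
\frac{\lambda_B\lambda_H}{\rho}\bigl(\|\bar{w}_k-w^\ast_k\|_2+\lambda_A\lambda_F\|s_{k+1}-s_k\|_2\bigr)\enspace,\nonumber
\end{align}
while Lemma \ref{lem:sec_term} controls the algorithmic error by
\begin{align}
C(1+\rho\lambda_G)M^{-\psi(d_L,n_z)}\Bigl(\lambda_B\lambda_H\|s_{k+1}-s_k\|_2+\Bigl(1+\tfrac{\lambda_B\lambda_H}{\rho}\Bigr)\|\bar{w}_k-w^\ast_k\|_2\Bigr)\enspace.\nonumber
\end{align}

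Adding the two upper bounds and collecting the coefficient of $\|\bar{w}_k-w^\ast_k\|_2$ yields exactly $\beta_w(\rho,M)$ as in \eqref{eq:beta_w}, since the $\frac{\lambda_B\lambda_H}{\rho}$ contribution from the parametric term combines with the $C(1+\rho\lambda_G)(1+\frac{\lambda_B\lambda_H}{\rho})M^{-\psi(d_L,n_z)}$ contribution from the algorithmic term. Collecting the coefficient of $\|s_{k+1}-s_k\|_2$ gives $\beta_s(\rho,M)$ in \eqref{eq:beta_s}, since the parametric term contributes $\frac{\lambda_B\lambda_H\lambda_A\lambda_F}{\rho}$ and the algorithmic term contributes $C(1+\rho\lambda_G)\lambda_B\lambda_H M^{-\psi(d_L,n_z)}$.

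I expect no real obstacle here since both lemmas have already done the analytic work; the only delicate point is verifying that the four hypotheses of the theorem suffice to apply both lemmas at once (in particular that the $r_A$ bound is used implicitly for the existence and uniqueness of $w^\ast_{k+1}$ in $\Bcal(w^\ast_k,\delta_A)$, which is required so that $\mu^\ast_{k+1}$ is well defined when evaluating $d_\rho(\mu^\ast_{k+1})$). Because the statement is ``for all $k\geq 0$'', the inequality is proved pointwise at each $k$ satisfying the listed conditions, not by induction; the question of whether those conditions propagate through time (i.e.\ whether the tracking error truly remains bounded) is what the subsequent analysis of the contraction factor $\beta_w(\rho,M)$ will have to address.
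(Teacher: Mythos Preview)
Your proposal is correct and matches the paper's approach exactly: the paper's proof is the single sentence ``This is a direct consequence of Lemmas~\ref{lem:fir_term} and~\ref{lem:sec_term}.'' Your write-up simply makes explicit the hypothesis-checking and coefficient-collecting that the paper leaves implicit.
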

\begin{IEEEproof}
This is a direct consequence of Lemmas\ \ref{lem:fir_term} and\ \ref{lem:sec_term}.
\end{IEEEproof}
\begin{rk}
Note that the last hypothesis\ \eqref{eq:hyp_kl_rad} may be quite restrictive, since $\left\|\bar{w}_k-w^\ast_k\right\|_2$ needs to be small enough for it to be satisfied. However, in many cases the radius $\delta$ is large ($+\infty$ for strongly convex functions).
\end{rk}

\looseness-1In order to ensure stability of the sequence of sub-optimal iterates $\bar{w}_k$, the parameter difference $\left\|s_{k+1}-s_k\right\|_2$ has to be small enough and the coefficient $\beta_w\left(\rho,M\right)$ needs  to be strictly less than $1$.~This is clearly satisfied if the penalty parameter $\rho$ is large enough to make $\displaystyle\nicefrac{\lambda_B\lambda_H}{\rho}$ small in \eqref{eq:beta_w}.~Yet the penalty parameter $\rho$ also appears in $1+\rho\lambda_G$.~Hence it needs to be balanced by a large enough number of primal iterations $M$ in order to make the first summand in \eqref{eq:beta_w} small.~The same analysis applies to the second coefficient $\beta_s\left(\rho,M\right)$ in order to mitigate the effect of the parameter difference $\left\|s_{k+1}-s_k\right\|_2$ on the sub-optimality error at $k+1$.
\begin{cor}[\label{cor:bnd_err}Boundedness of the error sequence]
Assume that $\rho$ and $M$ have been chosen so that $\beta_w\left(\rho,M\right)$ and $\beta_s\left(\rho,M\right)$ are strictly less than $1$, and $\rho>\tilde{\rho}$. Let $r_w>0$ such that 
\begin{align}
\delta-\left(1+\displaystyle\frac{\lambda_H\lambda_B}{\rho}\right)r_w>0
\end{align}
and $r_w<q_B\rho$. Let $r_s>0$ such that 
\begin{align}
r_s<\displaystyle\frac{(1-\beta_w(\rho,M))r_w}{\beta_s(\rho,M)}\enspace.
\end{align}
If $\left\|\bar{w}_0-w^\ast_0\right\|_2<r_w$ and for all $k\geq0$,
\begin{align}
\label{eq:hyp_prm_diff}
\left\|s_{k+1}-s_k\right\|_2\leq\min\left\{r_s,r_A,r_B,\displaystyle\frac{q_B\rho}{\lambda_A\lambda_F}\right\}\enspace,
\end{align}
then for all $k\geq0$, the error sequence satisfies 
\begin{align}
\left\|\bar{w}_k-w^\ast_k\right\|_2<r_w\enspace.
\end{align}
\end{cor}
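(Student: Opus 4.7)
The plan is to proceed by induction on $k$, using Theorem \ref{th:wk_con} at each step to propagate the bound $\|\bar w_k-w^\ast_k\|_2<r_w$. The base case $k=0$ is exactly the hypothesis $\|\bar w_0-w^\ast_0\|_2<r_w$. So the whole argument reduces to the inductive step: assuming $\|\bar w_k-w^\ast_k\|_2<r_w$, I will verify all four bullets of Theorem \ref{th:wk_con} at index $k$, then plug the contraction estimate \eqref{eq:wk_con} into the bound on $r_s$ to close the induction.

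For the verification, the bound \eqref{eq:hyp_prm_diff} immediately gives $\|s_{k+1}-s_k\|_2\le\min\{r_A,r_B,q_B\rho/(\lambda_A\lambda_F)\}$, which handles the first bullet. The inductive hypothesis together with $r_w<q_B\rho$ yields the second bullet $\|\bar w_k-w^\ast_k\|_2<q_B\rho$, and $\rho>\tilde\rho$ is assumed directly. The delicate point is the KL-radius condition \eqref{eq:hyp_kl_rad}: using the inductive hypothesis and \eqref{eq:hyp_prm_diff},
\begin{align}
&\bigl(1+\tfrac{\lambda_H\lambda_B}{\rho}\bigr)\|\bar w_k-w^\ast_k\|_2+\lambda_H\lambda_B\|s_{k+1}-s_k\|_2\nonumber\\
&\qquad<\bigl(1+\tfrac{\lambda_H\lambda_B}{\rho}\bigr)r_w+\lambda_H\lambda_B r_s\enspace.\nonumber
\end{align}
By the assumption $\delta-(1+\lambda_H\lambda_B/\rho)r_w>0$, the first summand leaves positive room up to $\delta$; shrinking $r_s$ further if necessary (i.e.\ imposing the additional, harmless bound $r_s<(\delta-(1+\lambda_H\lambda_B/\rho)r_w)/(\lambda_H\lambda_B)$ on top of the one in the statement) makes the full expression $<\delta$, discharging the fourth bullet.

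With all four hypotheses verified, Theorem \ref{th:wk_con} gives
\begin{align}
\|\bar w_{k+1}-w^\ast_{k+1}\|_2\le\beta_w(\rho,M)\|\bar w_k-w^\ast_k\|_2+\beta_s(\rho,M)\|s_{k+1}-s_k\|_2\enspace.\nonumber
\end{align}
Using $\|\bar w_k-w^\ast_k\|_2<r_w$, $\|s_{k+1}-s_k\|_2\le r_s$, and the defining inequality $r_s<(1-\beta_w(\rho,M))r_w/\beta_s(\rho,M)$ (which is well-posed precisely because $\beta_w(\rho,M)<1$ and $\beta_s(\rho,M)<1$ are enforced by the choice of $\rho$ and $M$), one gets
\begin{align}
\|\bar w_{k+1}-w^\ast_{k+1}\|_2<\beta_w(\rho,M)r_w+(1-\beta_w(\rho,M))r_w=r_w\enspace,\nonumber
\end{align}
which closes the induction.

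The main obstacle, as anticipated, is the fourth bullet \eqref{eq:hyp_kl_rad}: unlike the other three conditions, it couples the current primal-dual error and the current parameter variation against the KL radius $\delta$, and it is not automatically enforced by the stated hypotheses of the corollary. The resolution is to observe that $r_s$ can be tightened without affecting the contraction argument, since the bound \eqref{eq:hyp_prm_diff} already takes a minimum with $r_s$; so the same proof works with $r_s$ replaced by $\min\{r_s,(\delta-(1+\lambda_H\lambda_B/\rho)r_w)/(\lambda_H\lambda_B)\}$. Everything else is a straightforward chaining of Theorem \ref{th:wk_con} with the two scalar inequalities defining $r_w$ and $r_s$.
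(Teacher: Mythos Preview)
Your proof is essentially identical to the paper's: both proceed by induction on $k$, invoke Theorem~\ref{th:wk_con} at each step after checking its four hypotheses, and close the induction via the scalar inequality $\beta_w r_w+\beta_s r_s<r_w$.

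One point worth flagging: you are right to single out the fourth bullet \eqref{eq:hyp_kl_rad} as the delicate one. The paper's proof handles it by simply asserting that ``\eqref{eq:hyp_kl_rad} is satisfied, from the choice of $r_w$ and $r_s$,'' without spelling out why the stated constraint $r_s<(1-\beta_w)r_w/\beta_s$ suffices. Strictly speaking, as you observe, the hypotheses of the corollary guarantee only $(1+\lambda_H\lambda_B/\rho)r_w<\delta$, leaving a positive gap, and one must also have $\lambda_H\lambda_B r_s$ smaller than that gap. Your resolution---tightening $r_s$ by the harmless extra bound $r_s<\bigl(\delta-(1+\lambda_H\lambda_B/\rho)r_w\bigr)/(\lambda_H\lambda_B)$---is exactly the right fix and makes the argument fully rigorous, whereas the paper leaves this implicit. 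So your proof is in fact slightly more careful than the original on this point; otherwise the two are the same.
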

\begin{IEEEproof}
The proof proceeds by a straightforward induction.~At $k=0$, $\left\|\bar{w}_0-w^\ast_0\right\|_2<r_w$, by assumption.~Let $k\geq0$ and assume that $\left\|\bar{w}_k-w^\ast_k\right\|_2<r_w$.~As $\left\|s_{k+1}-s_k\right\|_2<r_A$, by applying Theorem\ \ref{theo:strg_reg_th}, there exists a unique $w^\ast_{k+1}\in\Bcal\left(w^\ast_k,\delta_A\right)$, which satisfies\ \eqref{eq:ge_1}.~As $\left\|s_{k+1}-s_k\right\|_2$ satisfies \eqref{eq:hyp_prm_diff}, $\left\|\bar{w}_k-w^\ast_k\right\|_2<q_B\rho$, $\rho>\tilde{\rho}$ and \eqref{eq:hyp_kl_rad} is satisfied, from the choice of $r_w$ and $r_s$, we have
\begin{align}
\left\|\bar{w}_{k+1}-w^\ast_{k+1}\right\|_2&\leq\beta_w\left(\rho,M\right)\big\|\bar{w}_k-w^\ast_k\big\|_2\\
&~~~~+\beta_s\left(\rho,M\right)\left\|s_{k+1}-s_k\right\|_2\nonumber\\
&\leq\beta_w\left(\rho,M\right)r_w+\beta_s\left(\rho,M\right)\left\|s_{k+1}-s_k\right\|_2\nonumber\\
&\leq r_w\enspace,
\end{align}
as $\left\|s_{k+1}-s_k\right\|_2\leq r_s<\displaystyle\nicefrac{(1-\beta_w(\rho,M))r_w}{\beta_s(\rho,M)}$.~Note from the choice of $r_w$ and $r_s$, the condition\ \eqref{eq:hyp_kl_rad} guaranteeing the weak contraction\ \eqref{eq:wk_con} is also recursively satisfied.
\end{IEEEproof}

\subsection{Improved contraction via continuation}
\label{subsec:hom_con}

\looseness-1In Algorithm\ \ref{algo:splt_trck}, only one dual update is performed to move from parameter $s_k$ to parameter $s_{k+1}$, in contrast to standard augmented Lagrangian techniques where the Lagrange multiplier $\mu$ and the penalty parameter $\rho$ are updated after every sequence of primal iterations.~Intuitively, one would expect that applying several dual updates instead of just one, drives the suboptimal solution $\bar{w}_{k+1}$ closer to the optimal one $w^\ast_{k+1}$, thus enhancing the tracking performance over time.~However, as the number of primal iterations $M$ is fixed a priori, it is not obvious at all why this would happen, as primal iterations generally need to become more accurate when the dual variable moves closer to optimality.~Therefore, we resort to an homotopy-based mechanism to fully exploit property\ \eqref{eq:wk_con}.~Continuation techniques\ \cite{geo1990}, in which the optimal solution is parameterised along an homotopy path from the previous parameter $s_k$ to the current one $s_{k+1}$, have been successfully applied for solving online convex quadratic programs in the\ \textsc{qpoases} package\ \cite{fer2008}. 

\looseness-1The parameter $s$ can be seen as an extra degree of freedom in Algorithm\ \ref{algo:splt_trck}, which can be modified along the iterations.~More precisely, instead of carrying out a sequence of alternating proximal gradient steps to find a critical point of $L_\rho\left(\cdot,\bar{\mu}_k,s_{k+1}\right)+\iota_\Zcal$ directly at the parameter $s_{k+1}$, one moves from $s_k$ towards $s_{k+1}$ step by step, each step corresponding to a dual update and a sequence of alternating proximal gradients.~The proposed approach can be seen as a form of `tracking in the tracking'.~More precisely, one defines a finite sequence $\left\{s_k^j\right\}$ of $D$ parameter along the homotopy path $\left\{\left(1-\tau\right)s_k+{\tau}s_{k+1}~\big|~\tau\in\left[0,1\right]\right\}$ by       
\begin{align}
\label{eq:s_j_k}
s^j_k:=\left(1-\displaystyle\frac{j}{D}\right)s_k+\displaystyle\frac{j}{D}s_{k+1},~j\in\left\{0,\ldots,D\right\}\enspace,
\end{align}
where $D\geq2$.~This modification results in Algorithm\ \ref{algo:hom_splt_trck} below.~At every step $j$, an homotopy update is first carried out. A sequence of proximal minimisation is then applied given the current parameter $s$ and multiplier $\mu$, which is updated at the end of step $j$.~In a sense, Algorithm\ \ref{algo:hom_splt_trck} consists in repeatedly applying Algorithm\ \ref{algo:splt_trck} on an artificial dynamics determined by the homotopy steps. 
\begin{algorithm}[h!]
	\caption{\label{algo:hom_splt_trck}Homotopy-based optimality tracking splitting algorithm}
	\begin{algorithmic}
		\State \textbf{Input:} Suboptimal primal-dual solution $\left(\bar{z}\left(s_k\right)^{\Trans},\bar{\mu}_k^{\Trans}\right)^{\Trans}$, parameters $s_k$ and $s_{k+1}$.
		\State $s \leftarrow s_k$, $\mu\leftarrow\bar{\mu}_k$, $z^{\texttt{wms}}\leftarrow\bar{z}_k$
		\State \underline{\textbf{Continuation loop}}:
		\For{$j=1\ldots D$}
			\State $s \leftarrow s+\displaystyle\frac{s_{k+1}-s_k}{D}$
			\State \underline{\textbf{Primal loop}}:	
			\State $z^{(0)}\leftarrow z^{\texttt{wms}}$
			\For{$l=0\ldots M-1$}
				\For{i=1\ldots P}
					\State $z_i^{(l+1)}\leftarrow\texttt{bckMin}\left(z_i^{(l)},\mu,\rho,s\right)$
				\EndFor
			\EndFor
			\State $z^{\texttt{wms}} \leftarrow z^{(M)}$
			\State \underline{\textbf{Dual update}}: $\mu\leftarrow\mu+\rho G\left(z^{(M)},s\right)$
		\EndFor
		\State $\bar{z}\left(s_{k+1}\right)\leftarrow z^{\texttt{wms}}$; $\bar{\mu}_{k+1}\leftarrow\mu$
		\end{algorithmic}
\end{algorithm}

The reason for introducing Algorithm\ \ref{algo:hom_splt_trck} is that it allows for a stronger contraction effect on the sub-optimality gap $\left\|\bar{w}_{k+1}-w^{\ast}_{k+1}\right\|_2$ than Algorithm\ \ref{algo:splt_trck}, as formalised by the following Theorem. 
\begin{lem}[\label{lem:hom_opt}Optimality along the homotopy path]
Given a time instant $k\geq0$, for all $j\in\left\{1,\ldots,D\right\}$, there exists a unique primal-dual variable $w^\ast\left(s^j_k\right)\in\Bcal\left(w_k^\ast,r_A\right)$ satisfying
\begin{align}
0\in F\left(w^\ast\left(s^j_k\right),s^j_k\right)+\Ncal_{\Zcal\times\Rset^m}\left(w^\ast\left(s^j_k\right)\right)\enspace.
\end{align} 
\end{lem}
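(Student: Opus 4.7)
The plan is to deduce this lemma as a direct corollary of Theorem~\ref{theo:strg_reg_th}. The only non-trivial step is verifying that each homotopy node $s^j_k$ lies inside the parameter ball $\Bcal\left(s_k,r_A\right)$ on which that theorem guarantees existence and uniqueness of a locally Lipschitz primal--dual solution to the generalised equation~\eqref{eq:ge_1}.

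First, I would compute the distance $\left\|s^j_k-s_k\right\|_2$ directly from the definition~\eqref{eq:s_j_k}. A convex-combination argument gives
\begin{align}
\left\|s^j_k-s_k\right\|_2=\displaystyle\frac{j}{D}\left\|s_{k+1}-s_k\right\|_2\leq\left\|s_{k+1}-s_k\right\|_2\enspace,\nonumber
\end{align}
since $j\in\left\{1,\ldots,D\right\}$. By Assumption~\ref{ass:prm_diff}, $\left\|s_{k+1}-s_k\right\|_2\leq r_A$, so $s^j_k\in\Bcal\left(s_k,r_A\right)$ for every $j$.

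Next, since $w^\ast_k$ is strongly regular for the generalised equation~\eqref{eq:ge_1} at $s_k$ by Assumption~\ref{ass:strg_reg}, I would invoke Theorem~\ref{theo:strg_reg_th} (the refined Robinson theorem already used in the paper) at the parameter $s^j_k$. This immediately yields existence and uniqueness of a primal--dual point $w^\ast\left(s^j_k\right)$ in the ball of radius $\delta_A$ (equivalently in $\Bcal\left(w^\ast_k,r_A\right)$ as stated) solving the generalised equation at $s^j_k$.

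There is no real obstacle here: the statement is essentially a restriction of Theorem~\ref{theo:strg_reg_th} to the discrete set of homotopy points, and the only ingredient that must be checked is the linear interpolation bound above. In the proof I would merely make explicit the use of Assumption~\ref{ass:prm_diff} and the convexity of $\Bcal\left(s_k,r_A\right)$ to conclude.
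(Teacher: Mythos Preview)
Your proposal is correct and matches the paper's own proof essentially line for line: the paper also reduces the lemma to the strong regularity result (Theorem~\ref{theo:strg_reg_th}) via Assumption~\ref{ass:prm_diff} together with the convex-combination bound $\left\|s^j_k-s_k\right\|_2\leq\left\|s_{k+1}-s_k\right\|_2$. Your write-up is simply a more explicit version of the paper's one-sentence justification.
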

\begin{IEEEproof}
This comes directly from the strong regularity of\ \eqref{eq:ge_1}, Assumption\ \ref{ass:prm_diff} and $\left\|s_k^j-s_k\right\|_2\leq\left\|s_{k+1}-s_k\right\|_2$ for all $j\in\left\{1,\ldots,D\right\}$.
\end{IEEEproof}
\begin{rk}
Note that the NMPC program\ \eqref{eq:prm_nlp_1} at parameter $s^j_k$, $j\in\left\{1,\ldots,D\right\}$, is feasible, by strong regularity of\ \eqref{eq:ge_1} at $w^\ast\left(s^0_k\right)$, since $\left\|s^j_k-s_k\right\|_2<r_A$. However, in general, for an arbitrarily large parameter difference $\left\|s_{k+1}-s_k\right\|_2$, this is not true, as the feasible set of the NMPC controller associated with\ \eqref{eq:prm_nlp_1} is not convex.  
\end{rk}
\begin{theo}[Improved contraction via continuation]
Assume that $\rho>\tilde{\rho}$ and that $\rho$ and $M$ have been chosen so that $\beta_w\left(\rho,M\right),\beta_s\left(\rho,M\right)<1$.~Given a time instant $k\geq0$, if $\left\|\bar{w}_k-w^\ast_k\right\|_2<r_w$, where $r_w$ satisfies the assumptions of Corollary\ \ref{cor:bnd_err}, and $\left\|s_{k+1}-s_k\right\|_2$ satisfies\ \eqref{eq:hyp_prm_diff}, then the primal-dual sub-optimal variable $\bar{w}_{k+1}$ yielded by Algorithm \ref{algo:hom_splt_trck} satisfies the following inequality 
\begin{align}
\label{eq:imp_ineq}
\left\|\bar{w}_{k+1}-w^{\ast}_{k+1}\right\|_2&\leq\beta_w^D\left(\rho,M\right)\left\|\bar{w}_k-w^{\ast}_k\right\|_2\\
&+\beta_s\left(\rho,M\right)\displaystyle\frac{\sum_{i=0}^{D-1}\beta_w^i\left(\rho,M\right)}{D}\left\|s_{k+1}-s_k\right\|_2\enspace\nonumber
\end{align}
\end{theo}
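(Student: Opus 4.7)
The plan is to view Algorithm 3 as $D$ successive applications of Algorithm 1 along the artificial homotopy trajectory $\{s_k^j\}_{j=0}^D$ defined in \eqref{eq:s_j_k}, and then unroll the one-step contraction of Theorem \ref{th:wk_con} $D$ times. The key observation is that between two consecutive homotopy parameters, the parameter difference is uniform and reduced by a factor $D$, since
\begin{align}
\left\|s_k^{j+1}-s_k^j\right\|_2=\displaystyle\frac{1}{D}\left\|s_{k+1}-s_k\right\|_2 \enspace. \nonumber
\end{align}

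First, I would denote by $\bar{w}_k^j$ the suboptimal primal-dual iterate produced at the end of the $j$-th iteration of the continuation loop in Algorithm \ref{algo:hom_splt_trck}, with $\bar{w}_k^0:=\bar{w}_k$ and $\bar{w}_k^D=\bar{w}_{k+1}$. By Lemma \ref{lem:hom_opt}, the corresponding critical points $w^{\ast}(s_k^j)$ exist and are unique in $\Bcal(w_k^{\ast},\delta_A)$, so the error $e_j:=\left\|\bar{w}_k^j-w^{\ast}(s_k^j)\right\|_2$ is well-defined, with $e_0=\left\|\bar{w}_k-w^{\ast}_k\right\|_2$ and $e_D=\left\|\bar{w}_{k+1}-w^{\ast}_{k+1}\right\|_2$.

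Next, I would argue by induction on $j\in\{0,\ldots,D-1\}$ that the hypotheses of Theorem \ref{th:wk_con} are satisfied at the transition from $s_k^j$ to $s_k^{j+1}$, so that
\begin{align}
e_{j+1}\leq\beta_w(\rho,M)\,e_j+\beta_s(\rho,M)\,\displaystyle\frac{\left\|s_{k+1}-s_k\right\|_2}{D}\enspace. \nonumber
\end{align}
The parameter-difference condition $\left\|s_k^{j+1}-s_k^j\right\|_2<\min\{r_A,r_B,q_B\rho/(\lambda_A\lambda_F)\}$ holds because $\left\|s_k^{j+1}-s_k^j\right\|_2\leq\left\|s_{k+1}-s_k\right\|_2$ and the latter satisfies \eqref{eq:hyp_prm_diff} by assumption. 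The error bound $e_j<q_B\rho$ and the KL-radius condition \eqref{eq:hyp_kl_rad} follow from an induction identical to that of Corollary \ref{cor:bnd_err}, applied to the artificial dynamics indexed by $j$: indeed $e_0<r_w<q_B\rho$, and if $e_j<r_w$, the one-step inequality above combined with $\beta_w(\rho,M),\beta_s(\rho,M)<1$ and $\left\|s_{k+1}-s_k\right\|_2/D\leq r_s$ yields $e_{j+1}<r_w$.

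Finally, I would iterate this linear recursion from $j=0$ to $j=D-1$, which yields
\begin{align}
e_D\leq\beta_w^D(\rho,M)\,e_0+\beta_s(\rho,M)\,\displaystyle\frac{\left\|s_{k+1}-s_k\right\|_2}{D}\sum_{i=0}^{D-1}\beta_w^i(\rho,M)\enspace, \nonumber
\end{align}
which is exactly \eqref{eq:imp_ineq}. The only delicate step is the bookkeeping needed to show that the KL radius condition \eqref{eq:hyp_kl_rad} and the bound $e_j<q_B\rho$ are preserved throughout the continuation loop; this is what forces us to choose $r_w$ as in Corollary \ref{cor:bnd_err} and to exploit the fact that the per-step parameter increment $\left\|s_{k+1}-s_k\right\|_2/D$ is strictly smaller than $\left\|s_{k+1}-s_k\right\|_2$, so nothing new needs to be imposed on $r_s$.
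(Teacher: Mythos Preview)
Your proposal is correct and follows essentially the same approach as the paper's proof: interpret Algorithm~\ref{algo:hom_splt_trck} as $D$ successive applications of Algorithm~\ref{algo:splt_trck} along the homotopy parameters $s_k^j$, verify by induction (exactly as in Corollary~\ref{cor:bnd_err}) that the hypotheses of Theorem~\ref{th:wk_con} hold at each step using $\|s_k^{j+1}-s_k^j\|_2=\|s_{k+1}-s_k\|_2/D$, and then unroll the resulting linear recursion. The paper's argument and yours are organized identically, including the bookkeeping for the bounds $e_j<r_w<q_B\rho$ and the KL-radius condition~\eqref{eq:hyp_kl_rad}.
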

\begin{IEEEproof}
For all $j\in\left\{1,\ldots,D\right\}$, define 
\begin{align}
\bar{\mu}^j_k:=\bar{\mu}^{j-1}_k+\rho{}G\left(\bar{z}_k^j,s_k^j\right)\nonumber
\end{align} 
with $\bar{\mu}^0_k:=\bar{\mu}_k$ and where $\bar{z}_k^j$ is obtained after $M$ alternating proximal gradient steps applied to $L_\rho\left(\cdot,\bar{\mu}^{j-1}_k,s_k^j\right)+\iota_\Zcal$.~One can thus define a sub-optimal primal-dual variable 
\begin{align}
\nonumber
\bar{w}_k^j:=\left(\left(\bar{z}_k^j\right)^{\Trans},\left(\bar{\mu}_k^j\right)^{\Trans}\right)^{\Trans}
\end{align}
for the homotopy parameter $s^j_k$.~By applying Corollary\ \ref{cor:bnd_err}, one obtains that for all $j\in\left\{0,\ldots,D-1\right\}$, 
\begin{align}
\nonumber
\left\|\bar{w}_k^j-w^\ast\left(s^j_k\right)\right\|_2< r_w<q_B\rho\enspace,
\end{align} 
since 
\begin{align}
\nonumber
\left\|s^{j+1}_k-s^j_k\right\|_2=\displaystyle\frac{\left\|s_{k+1}-s_k\right\|_2}{D}<\min\left\{r_s,r_A,r_B,\frac{q_B\rho}{\lambda_A\lambda_F}\right\}\enspace.
\end{align}
It can also be readily shown that for all $j\in\left\{0,\ldots,D-1\right\}$,
\begin{align} 
\left(1+\displaystyle\frac{\lambda_H\lambda_B}{\rho}\right)\left\|\bar{w}_k^j-w^\ast\left(s^j_k\right)\right\|_2+\lambda_H\lambda_B\left\|s^{j+1}_k-s^j_k\right\|_2<\delta\enspace.
\end{align}
Subsequently, one can apply the same reasoning as for proving Theorem\ \ref{th:wk_con}, and get that for all $j\in\left\{0,\ldots,D-1\right\}$, 
\begin{align}
\label{eq:hom_con_j}
&\left\|\bar{w}_k^{j+1}-w^\ast\left(s_k^{j+1}\right)\right\|_2\leq\\
&~~~~~\beta_w\left(\rho,M\right)\left\|\bar{w}_k^j-w^\ast\left(s_k^j\right)\right\|_2+\beta_s\left(\rho,M\right)\left\|s^{j+1}_k-s^j_k\right\|_2\enspace.\nonumber
\end{align}
By iterating inequality\ \eqref{eq:hom_con_j} from $j=0$ to $D-1$, we obtain
\begin{align}
 \left\|\bar{w}_{k+1}-w^\ast_{k+1}\right\|_2&\leq\beta_w\left(\rho,M\right)\left\|\bar{w}_k^{D-1}-w^\ast\left(s_k^{D-1}\right)\right\|_2\nonumber\\
 &~~~~+\displaystyle\frac{\beta_s\left(\rho,M\right)}{D}\left\|s_{k+1}-s_k\right\|_2\nonumber\\
 &\leq\ldots\nonumber\\
 &\leq\beta_w^D\left(\rho,M\right)\left\|\bar{w}^0_k-w^{\ast}\left(s^0_k\right)\right\|_2\nonumber\\
 &+\beta_s\left(\rho,M\right)\displaystyle\frac{\sum_{j=0}^{D-1}\beta_w^j\left(\rho,M\right)}{D}\left\|s_{k+1}-s_k\right\|_2\enspace\nonumber
 \end{align}
 which is exactly inequality\ \eqref{eq:imp_ineq}.
\end{IEEEproof}
As $\beta_w\left(\rho,M\right)<1$, $\beta_s\left(\rho,M\right)<1$ and $D\geq2$, it follows that 
\begin{align}
\nonumber
\beta_w^D\left(\rho,M\right)<\beta_w\left(\rho,M\right)
\end{align}
and
\begin{align} 
\beta_s\left(\rho,M\right)\displaystyle\frac{\sum_{i=0}^{D-1}\beta_w^i\left(\rho,M\right)}{D}<\beta_s\left(\rho,M\right)\enspace,\nonumber
\end{align} 
which implies that the contraction\ \eqref{eq:imp_ineq} is stronger than\ \eqref{eq:wk_con}.~In practice, the coefficients $\beta_w\left(\rho,M\right)$ and $\beta_s\left(\rho,M\right)$ in\ \eqref{eq:wk_con} can be reduced by an appropriate tuning of the penalty $\rho$ and the number of primal proximal steps $M$.~Yet this approach is limited, as previously discussed in Paragraph \ref{subsec:stab_ana}.~Therefore, Algorithm\ \ref{algo:hom_splt_trck} provides a more efficient and systematic way of improving the optimality tracking performance.~Superiority of Algorithm\ \ref{algo:hom_splt_trck} over Algorithm\ \ref{algo:splt_trck} is demonstrated on a numerical example in Section\ \ref{sec:num_ex}.


\section{Computational considerations}
\label{sec:comp_asp}
\looseness-1By making use of partial penalisation, Algorithm\ \ref{algo:splt_trck} allows for a more general problem formulation than\ \cite{zav2010}, where the primal QP sub-problem is assumed to have non-negativity constraints only.~Moreover, the approach of\ \cite{zav2010} is likely to be efficient only when the sub-optimal solution is close enough to the optimal manifold so as to guarantee positive definiteness of the hessian of the augmented Lagrangian.~In practice, this cannot always be ensured if the reference change or disturbances are too large.~In contrast, our framework can handle any polynomial non-convex objective subject to convex constraint set $\Zcal_i$ for which the proximal operator\ \eqref{eq:prox} is cheap to compute.~This happens when $\Zcal_i$ is a ball, an ellipsoid, a box, the non-negative orthant or even second-order conic constraints and the semi-definite cone.~However, the theoretical properties derived in Section\ \ref{sec:con_prop} seem to be limited to polyhedral constraint sets.
\begin{rk}
\looseness-1For many non-convex sets, such as spheres or mixed-integer sets, the proximal operator\ \eqref{eq:prox} can be obtained in closed-form.~However, the analysis of Section\ \ref{sec:con_prop} does not readily extend, as Robinson's strong regularity is defined for closed convex sets\ \cite{robin1980}.
\end{rk}
\begin{rk}
In a distributed framework, any convex polyhedral set $\Zcal_i$ could be handled by Algorithm\ \ref{algo:splt_trck}, as a non-negative slack variable can be introduced for every agents.  
\end{rk}
Algorithm\ \ref{algo:splt_trck} can be further refined by introducing local copies of the variables.~Considering the NLP
\begin{align}
\minimise~&J(z_1,\ldots,z_P)\nonumber\\
\text{s.t.}~&G(z_1,\ldots,z_P)=0\nonumber\\
&z_1\in\Zcal_1,\ldots,z_P\in\Zcal_P\enspace,\nonumber
\end{align}
variables $y_i$ can be incorporated in the equality constraints, resulting in
\begin{align}
\minimise~&J(y_1,\ldots,y_P)\nonumber\\
\text{s.t.}~&G(y_1,\ldots,y_P)=0\nonumber\\
&y_i-z_i=0~~\forall i\in\left\{1,\ldots,P\right\}\nonumber\\
&z_1\in\Zcal_1,\ldots,z_P\in\Zcal_P\enspace.\nonumber
\end{align}
Subsequently, at iteration $l+1$, some of the alternations are given by 
\begin{align}
\minimise_{z_i\in\Zcal_i}~\nu_i^{\Trans}\left(y_i^{(l+1)}-z_i\right)&+\frac{\rho}{2}\left\|y_i^{(l+1)}-z_i\right\|_2^2\nonumber\\
&~~~~~~~~~~~~+\frac{\alpha_i}{2}\left\|z_i-z_i^{(l)}\right\|_2^2\enspace,\nonumber
\end{align}
where $\nu_i$ is a dual variable associated with the equality constraint $y_i-z_i=0$.~This step can be rewritten 
\begin{align} 
\minimise_{z_i\in\Zcal_i}~\left\|z_i-\frac{1}{\alpha_i+\rho}\left(\alpha_iz_i^{(l)}+{\rho}y_i^{(l+1)}+\nu_i\right)\right\|_2\enspace,\nonumber
\end{align}
which corresponds to projecting 
\begin{align}
\frac{1}{\alpha_i+\rho}\left(\alpha_iz_i^{(l)}+{\rho}y_i^{(l+1)}+\nu_i\right)\nonumber
\end{align}
onto $\Zcal_i$.~This type of an approach is useful if the minimisation over the $y_i$ variables is tractable, for instance when $J$ is multi-convex and $G$ is multilinear, and the projection onto $\Zcal_i$ is cheap to compute.
\section{Numerical examples}
\label{sec:num_ex}
\looseness-1Algorithms\ \ref{algo:splt_trck} and\ \ref{algo:hom_splt_trck} are tested on two nonlinear systems, a DC motor (centralised) in paragraph\ \ref{subsec:dc_mot} and a formation of three unicycles (distributed) in paragraph\ \ref{subsec:coll_track}.~The effect of the penalty parameter $\rho$ and the sampling period ${\Delta}t$ is analysed, assuming that a fixed number of iterations can be performed per second.~Thus, given a sampling period ${\Delta}t$, the number of communications between the $P$ groups of agents is limited to a fixed value, which models practical limitations of distributed computations.~In particular, it is shown that the theoretical results proven in Section \ref{sec:con_prop} are able to predict the practical behaviour of the combined system-optimiser dynamics quite well, and that tuning the optimiser's step-size $\rho$ and the system's step-size ${\Delta}t$ has an effect on the closed-loop trajectories.

\looseness-1From a more practical perspective, the purpose of the simulations that follow is to investigate the effect of a limited computational power and limited communication rate on the closed-loop performance of our scheme.~This is of particular importance in the case of distributed NMPC problems, as in practice, only a limited number of packets can be exchanged between the $P$ groups of agents within a fixed amount of time, which implies that a suboptimal solution is yielded by Algorithm\ \ref{algo:splt_trck}.

\begin{rk} 
In the following examples, the first optimal primal-dual solution $w^{\ast}_0$ is computed using the distributed algorithm proposed in\ \cite{hou2014}.~A random perturbation is then applied to this KKT point.
\end{rk}

\subsection{DC motor}
 \label{subsec:dc_mot}
The first example is a DC motor with continuous-time bilinear dynamics
\begin{align}
\dot{x}=Ax+Bx\cdot u+c\enspace,\nonumber
\end{align}
where 
\begin{align}
&A=\begin{pmatrix}
-\nicefrac{R_a}{L_a} & 0 \\
0 & -\nicefrac{B}{J}
\end{pmatrix}\enspace, B=\begin{pmatrix}
0 & -\nicefrac{k_m}{L_a} \\
\nicefrac{k_m}{J} & 0
\end{pmatrix}\enspace,\nonumber\\
&c=\begin{pmatrix}
\nicefrac{u_a}{L_a}\\
\nicefrac{-\tau_l}{J}
\end{pmatrix}\enspace,\nonumber
\end{align}
and the parameters are borrowed from the experimental identification presented in\ \cite{dan1998}:
\begin{align}
L_a&=0.307~\text{H},~R_a=12.548~\Omega,~k_m=0.22567~\nicefrac[]{\text{Nm}}{\text{A}^2}\enspace,\nonumber\\
J&=0.00385~\text{Nm.sec}^2,~B=0.00783~\text{Nm.sec}\enspace,\nonumber\\
\tau_l&=1.47~\text{Nm},~u_a=60~\text{V}\enspace.\nonumber
\end{align}
\looseness-1The first component of the state variable $x_1$ is the armature current, while the second component $x_2$ is the angular speed.~The control input $u$ is the field current of the machine.~The control objective is to make the angular speed track a piecewise constant reference $x_2^{\text{ref}}=\pm2~\nicefrac[\text]{rad}{sec}$, while satisfying the following state and input constraints:
\begin{align}
&\underline{x}=\begin{pmatrix}
-2~\text{A}\\ -8~\text{rad/sec}
\end{pmatrix},~\overline{x}=\begin{pmatrix}
5~\text{A}\\ 1.5~\text{rad/sec}
\end{pmatrix}\enspace,\nonumber\\
&\underline{u}=1.27~\text{A},~\overline{u}=1.4~\text{A}\enspace.\nonumber
\end{align}
\looseness-1The continuous-time NMPC problem for reference tracking is discretised at a given sampling period ${\Delta}t$ using an explicit Euler method, which results in a bilinear NLP.~Although the consistency of the explicit Euler integrator is $1$, only the first control input is applied to the real system, implying that the prediction error with respect to the continuous-time dynamics is small.~For simulating the closed-loop system under the computed NMPC control law, the\ \textsc{matlab} integrator\ \texttt{ode45} is used with the sampling period ${\Delta}t$.~The prediction horizon is fixed at $30$ samples.~This is a key requirement for the analysis that follows, as explained later.
\begin{figure}[h!]
\begin{center}
\includegraphics[width=1\columnwidth]{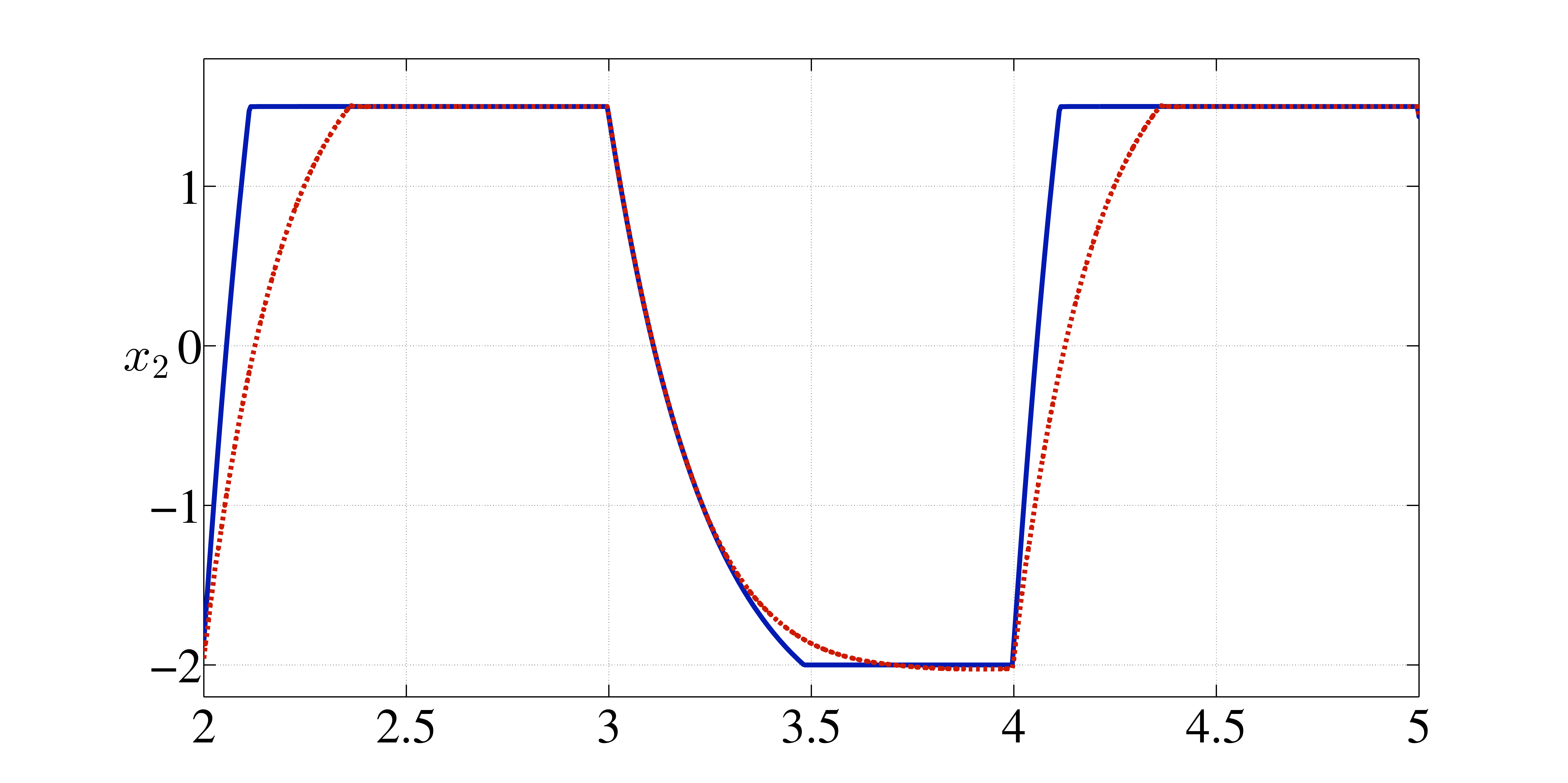}
\end{center}
\vspace{-0.4cm}
\begin{center}
\includegraphics[width=1\columnwidth]{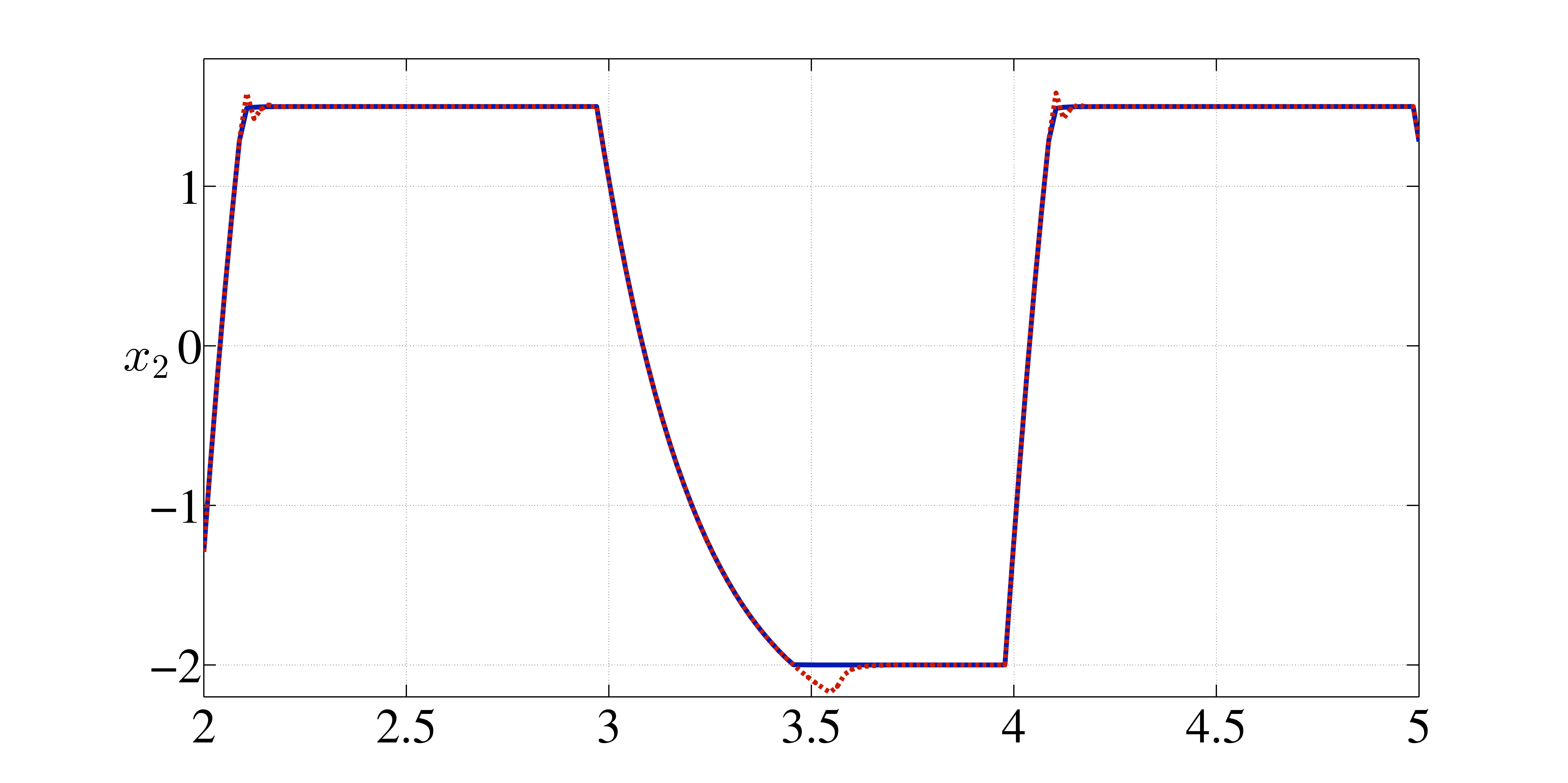}
\end{center}
\vspace{-0.4cm}
\begin{center}
\includegraphics[width=1\columnwidth]{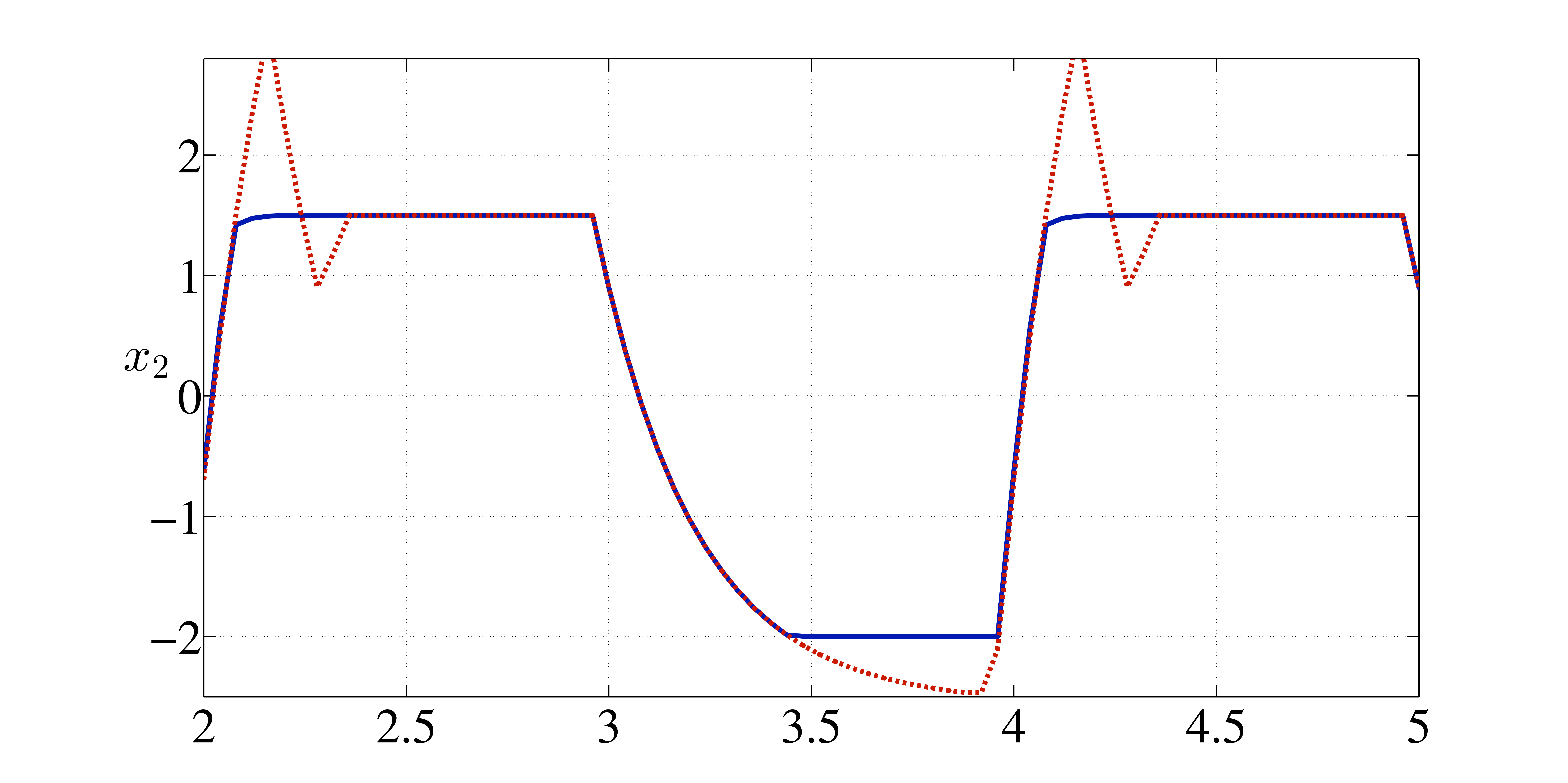}
\vspace{-0.4cm}
\put(0,0){\footnotesize Time (s)}
\end{center}
\caption{\label{fig:angSpeed_dt}Angular speed against time for increasing sampling periods ${\Delta}t$ and a fixed computational power $2\cdot10^3~\nicefrac{\text{prox}}{\text{sec}}$: $0.004$ sec (top), $0.018$ sec (middle) and $0.04$ sec (bottom).~The sub-optimal trajectory obtained with Algorithm\ \ref{algo:splt_trck} is plotted in dashed red, while the full NMPC trajectory obtained using\ \textsc{ipopt} (for the same ${\Delta}t$) is in blue.}
\end{figure}
\begin{figure}[h!]
\begin{center}
\includegraphics[width=1\columnwidth]{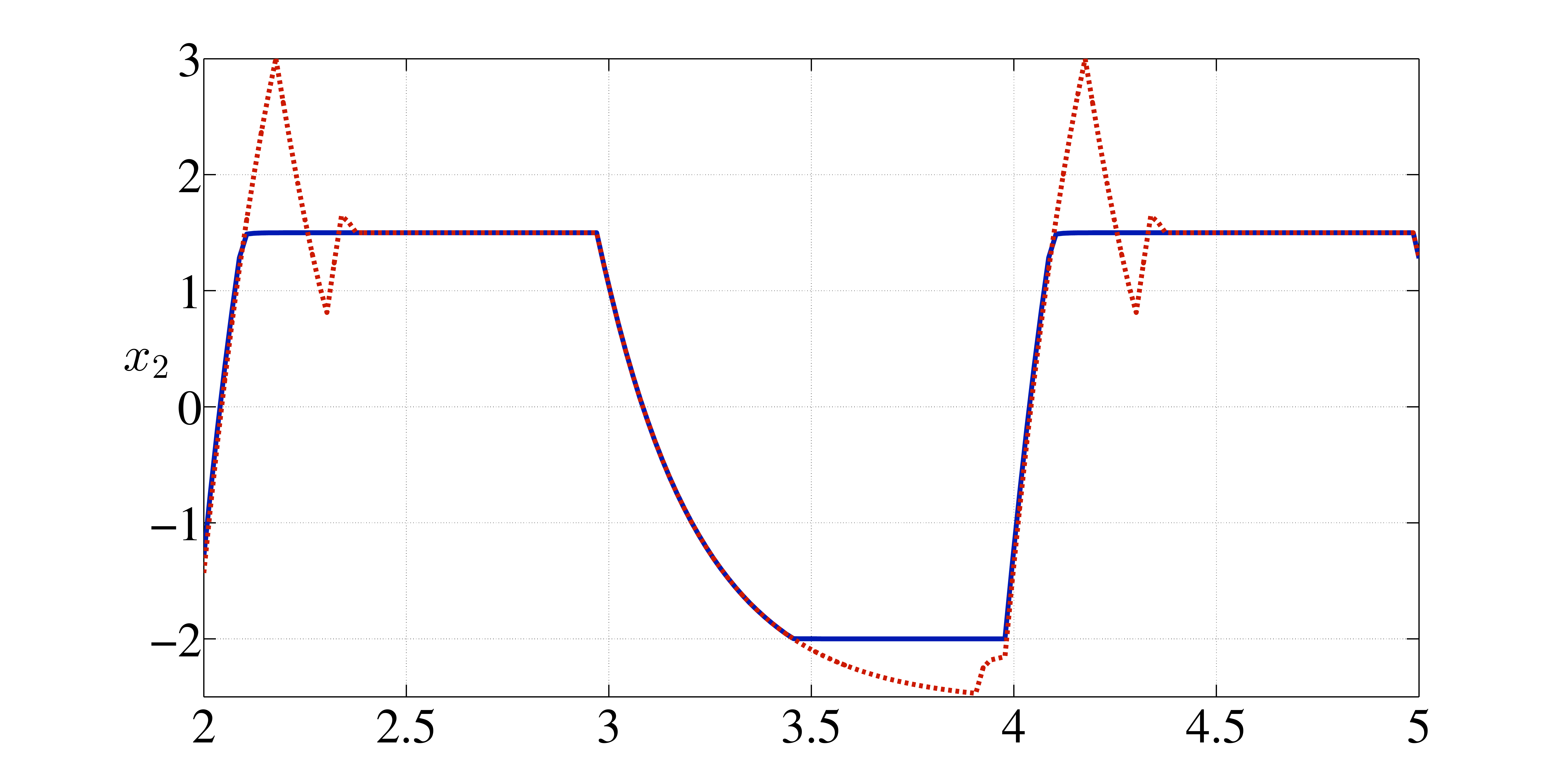}
\vspace{-0.4cm}
\includegraphics[width=1\columnwidth]{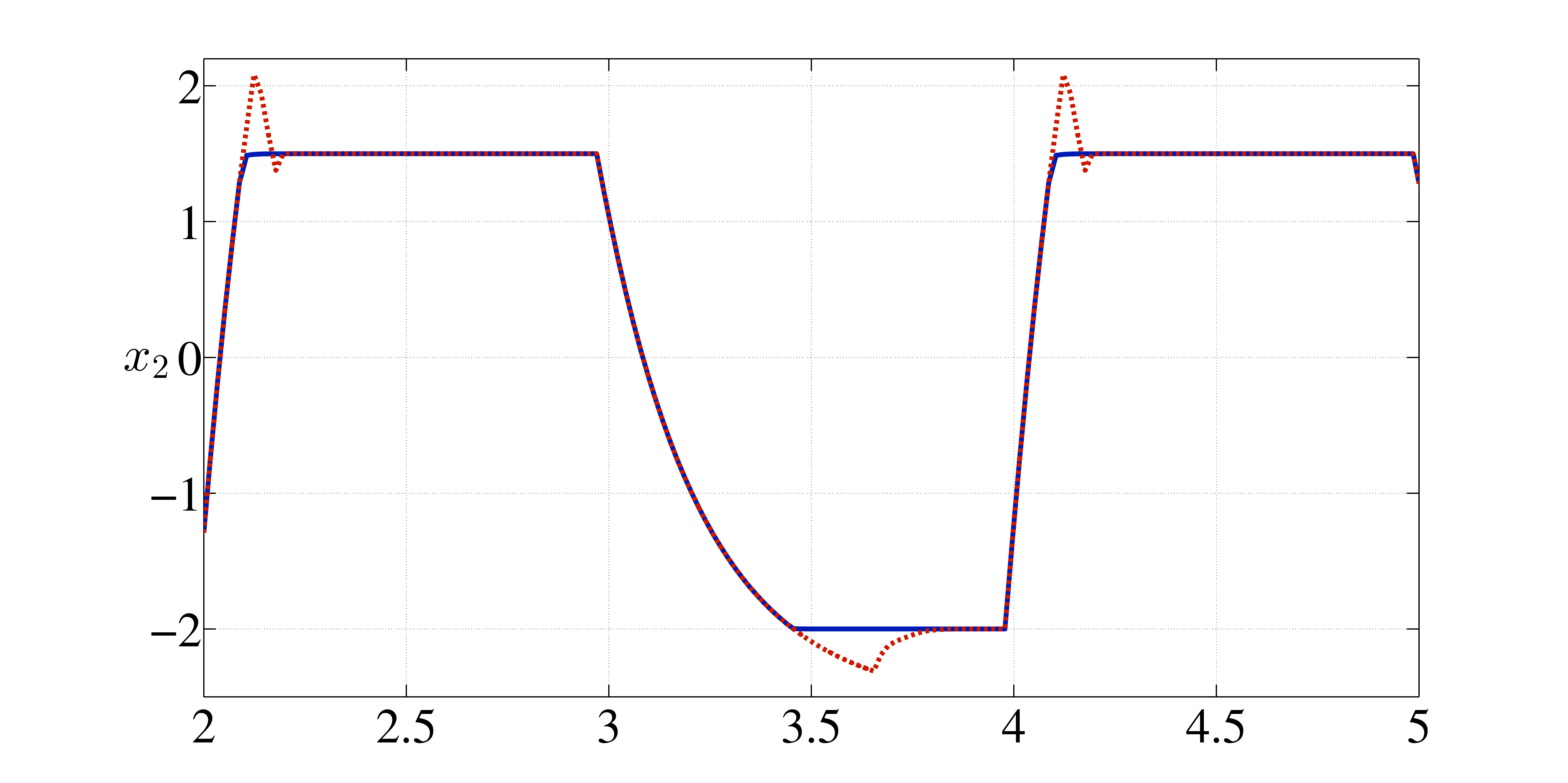}
\vspace{-0.4cm}
\includegraphics[width=1\columnwidth]{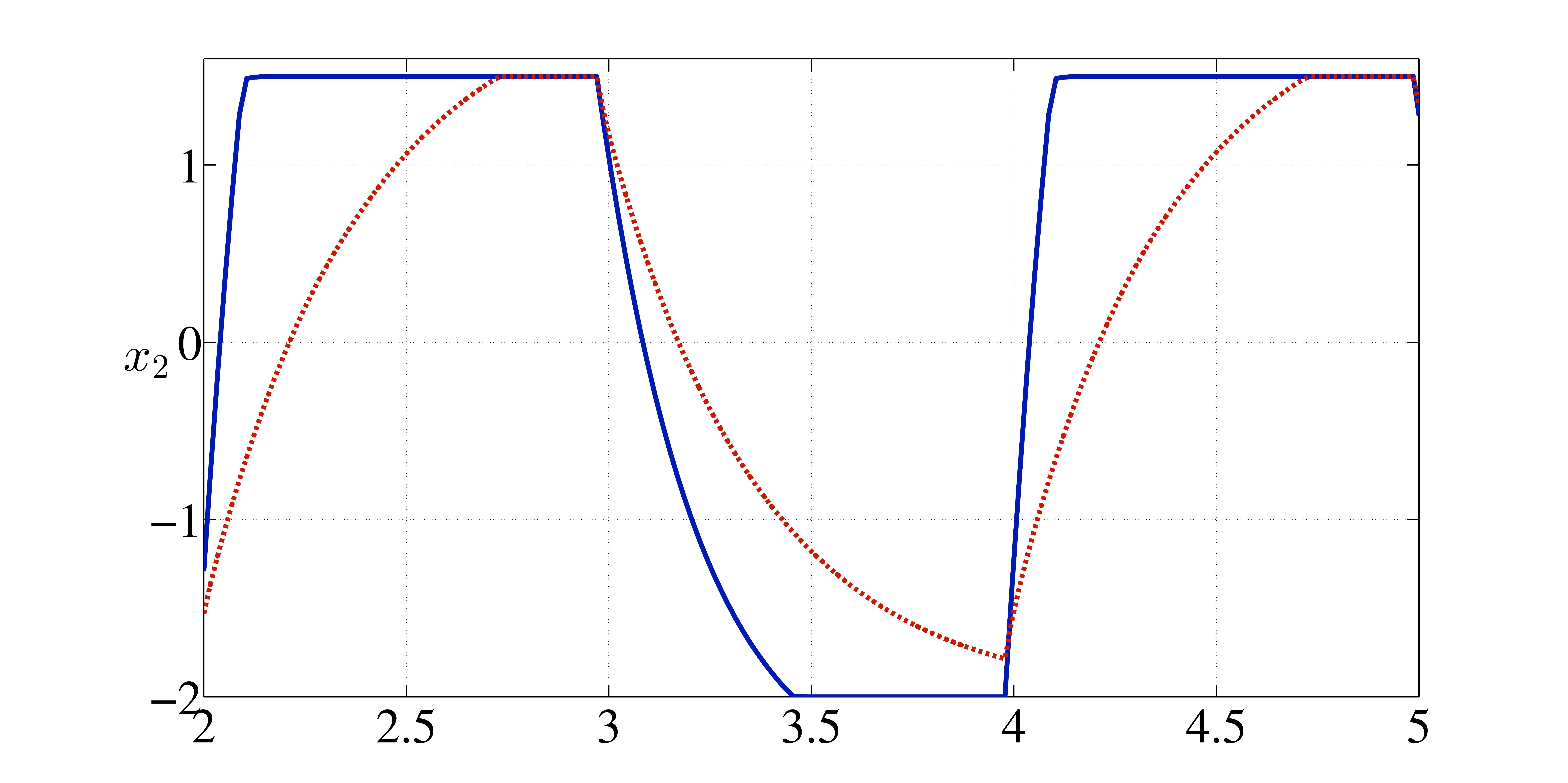}
\vspace{-0.4cm}
\put(0,0){\footnotesize Time (s)}
\end{center}
\caption{\label{fig:angSpeed_rho}Angular speed against time for increasing penalty parameters $\rho$ and a fixed computation power $2\cdot10^3~\nicefrac{\text{prox}}{\text{sec}}$: $20$ (top), $100$ (middle) and $1000$ (bottom).~The sub-optimal trajectory obtained with Algorithm\ \ref{algo:splt_trck} is plotted in dashed red, while the full NMPC trajectory obtained using\ \textsc{ipopt} (for the same ${\Delta}t$) is in blue.}
\end{figure}

\looseness-1In general, the computational power of an embedded computing platform is quite limited, meaning that the total number of proximal steps that can be computed within one second by Algorithms\ \ref{algo:splt_trck} and\ \ref{algo:hom_splt_trck} is fixed and finite.~Later on, we refer to this number as the\ \textit{computational power}, expressed in $\nicefrac{\text{prox}}{\text{sec}}$.~The results plotted in Figs.\ \ref{fig:angSpeed_dt}, \ref{fig:angSpeed_rho},\ \ref{fig:kkt} and\ \ref{fig:feas} are obtained for a computational power of $2\cdot10^3~\nicefrac{\text{prox}}{\text{sec}}$.~In Fig.\ \ref{fig:angSpeed_dt}, it clearly appears that a better tracking performance is obtained for ${\Delta}t=0.018$ sec, compared to a lower sampling period (${\Delta}t=0.004$ sec) or a larger sampling period (${\Delta}t=0.04$ sec).~The effect of the system's step-size ${\Delta}t$ on the performance of Algorithm\ \ref{algo:splt_trck} given a fixed computational power is demonstrated more clearly in Fig.\ \ref{fig:par_dc_palm}.

\looseness-1Another key parameter is the penalty coefficient $\rho$, which can also be interpreted as a step-size for the optimiser.~In order to demonstrate the effect of $\rho$ on the efficacy of our optimality tracking splitting scheme, the sampling period ${\Delta}t$ is fixed at $0.018$ sec given a computational power of $2\cdot10^3~\nicefrac{\text{prox}}{\text{sec}}$, which implies that the total number of primal iterations is $M=36$, and $\rho$ is made vary within $\left\{20,100,1\cdot10^3\right\}$.~Figure\ \ref{fig:angSpeed_rho} shows that a better tracking performance is obtained with $\rho=100$ than with $\rho=20$ or $\rho=1\cdot10^3$.~This can be deduced from the expression of the coefficient $\beta_w\left(\rho,M\right)$ in Eq.\ \eqref{eq:beta_w}, as explained in Paragraph\ \ref{subsec:stab_ana}.~The optimal choice of the penalty parameter is known to be critical to the convergence speed of ADMM, which is very similar to our optimality tracking splitting scheme, since it can be interpreted as a truncated Gauss-Seidel procedure in an augmented Lagrangian.~To our knowledge, this effect has only been observed for ADMM-type techniques when dealing with convex programs.~When solving non-convex programs using augmented Lagrangian techniques, it is commonly admitted that $\rho$ should be chosen large enough in order to ensure (locally) positive definiteness of the hessian of the augmented Lagrangian.~Taking $\rho$ too large is known to result in ill-conditioning.~For Algorithm\ \ref{algo:splt_trck}, which is essentially a first-order method, the analysis is different, as $\rho$ does not affect the algorithm at the level of linear algebra, but does impact the contraction of the primal-dual sequence, and thus the convergence speed over time, or tracking performance.~Thus our study provides a novel interpretation of the choice of $\rho$ via a parametric analysis in a non-convex framework.~The effect of the optimiser step-size $\rho$ on the closed-loop performance fully appears in Fig.\ \ref{fig:rho_eff}.   
\begin{figure}[h!]
\begin{center}
\includegraphics[width=1\columnwidth]{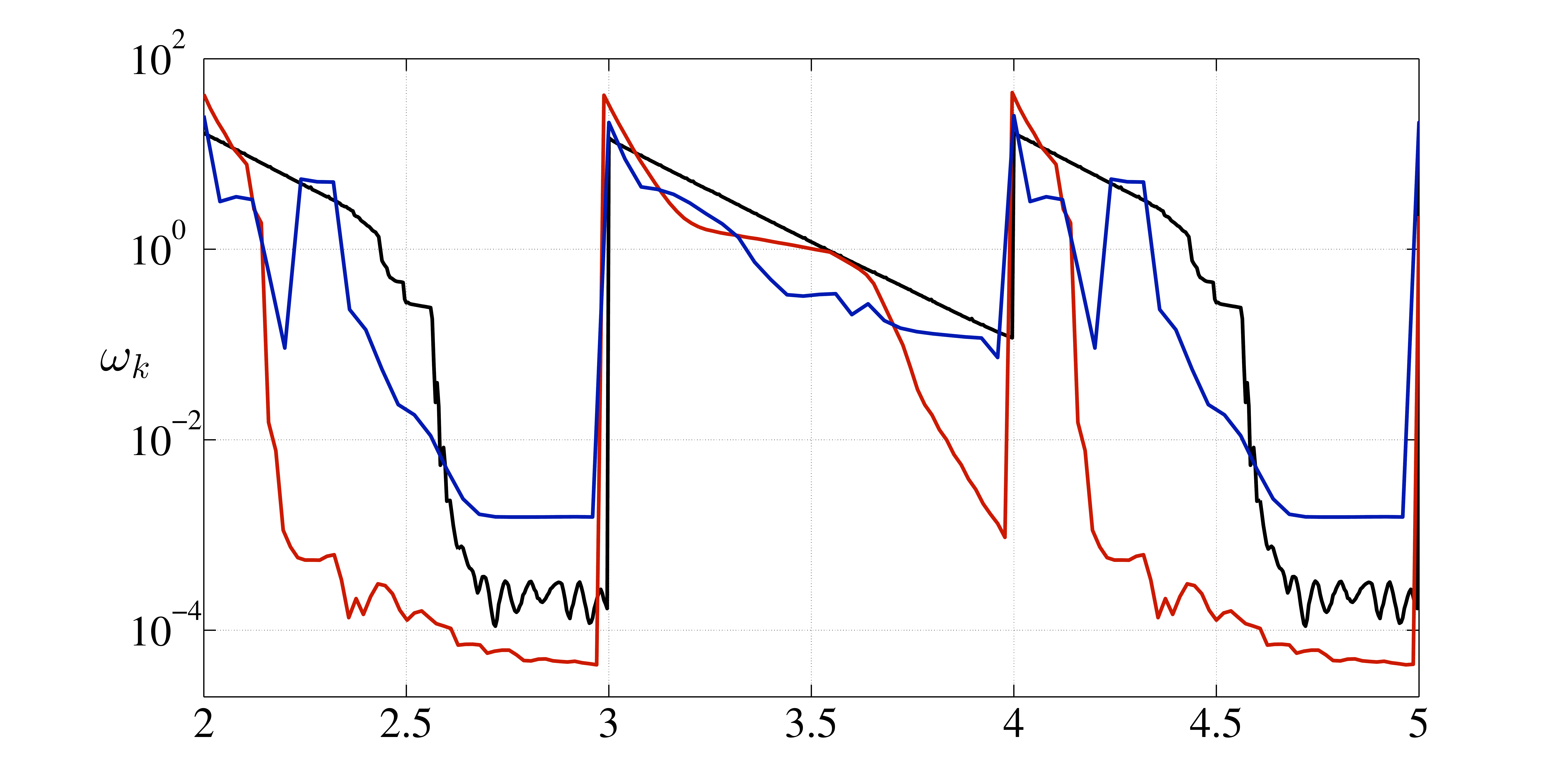}
\vspace{-0.4cm}
\put(0,5){\footnotesize Time (s)}
\end{center}
\caption{\label{fig:kkt}Optimality of bound constrained augmented Lagrangian program for different sampling periods ${\Delta}t$ and a fixed computation power $2\cdot10^3~\nicefrac{\text{prox}}{\text{sec}}$: $0.004$ sec (black), $0.018$ sec (red) and $0.04$ sec (blue).}
\end{figure}
Satisfaction of the KKT conditions of the parametric augmented Lagrangian problem 
\begin{align}
\minimise_{z\in B\left(\underline{z},\overline{z}\right)}~L_\rho\left(z,\bar{\mu}_k,s_{k+1}\right)\nonumber
\end{align}
is measured along the closed-loop trajectory by computing 
\begin{align}
\omega_k:=\|\pi_{B\left(\underline{z},\overline{z}\right)}\left(\bar{z}\left(\bar{\mu}_{k-1},s_k\right)-{\nabla}L_\rho\left(\bar{z}\left(\bar{\mu}_{k-1},s_k\right),\bar{\mu}_{k-1},s_k\right)\right)\nonumber\\
-\bar{z}\left(\bar{\mu}_{k-1},s_k\right)\|_2,\enspace\nonumber
\end{align}
\looseness-1which is plotted in Fig.\ \ref{fig:kkt}.~Over time, convergence towards low criticality values is faster for ${\Delta}t=0.18$ sec, than for shorter sampling period (${\Delta}t=0.004$ sec) or larger sampling period (${\Delta}t=0.04$ sec).~The same effect can be observed for the feasibility of the nonlinear equality constraints $G\left(\cdot,s_k\right)$, as pictured in Fig.\ \ref{fig:feas}. 
\begin{figure}[h!]
\begin{center}
\includegraphics[width=1.1\columnwidth]{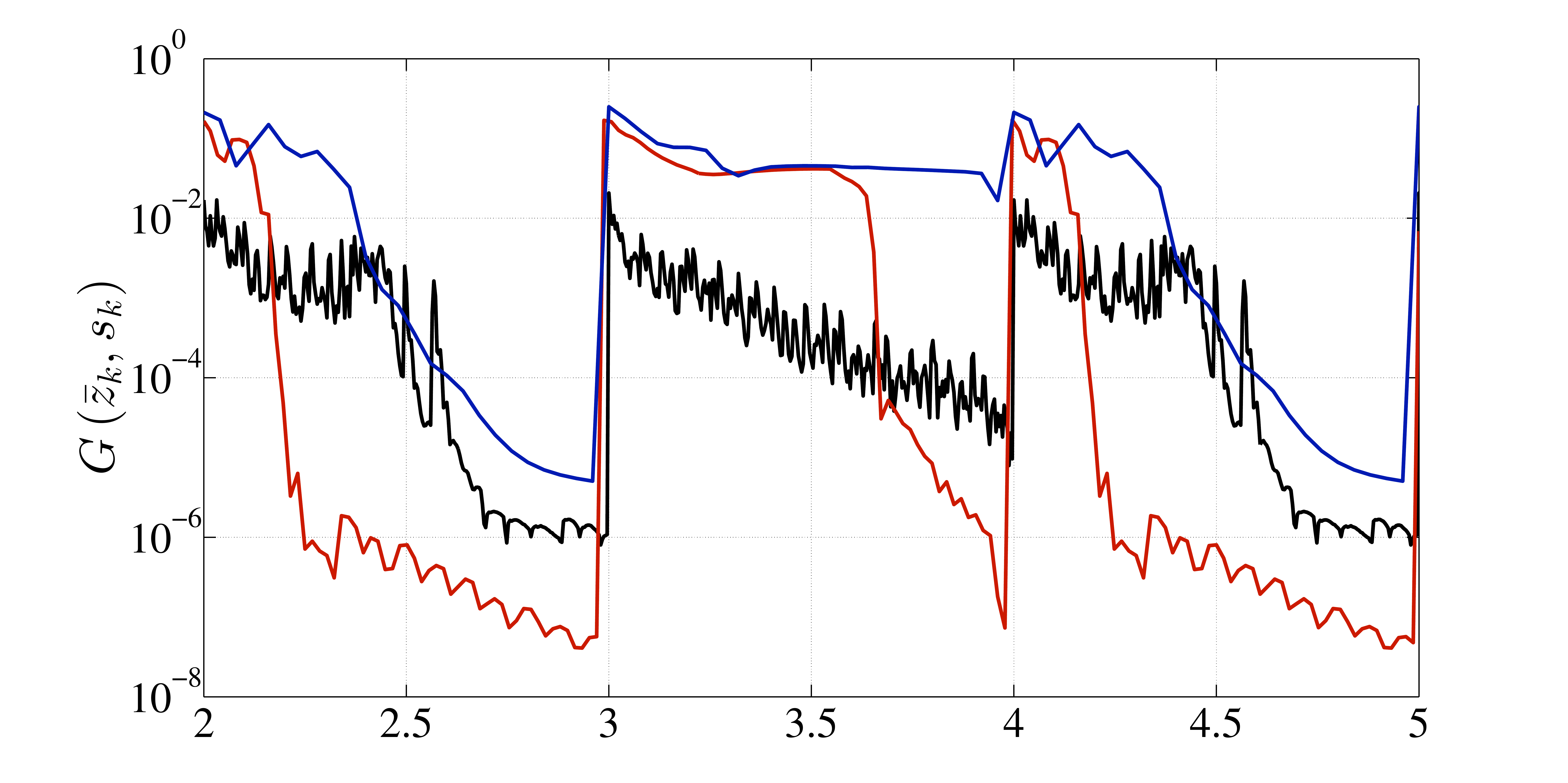}
\vspace{-0.4cm}
\put(0,5){\footnotesize Time (s)}
\end{center}
\caption{\label{fig:feas}Norm of equality constraints $\left\|G\left(\bar{z}_k,s_k\right)\right\|_2$ for different sampling periods ${\Delta}t$ and a fixed computation power $2\cdot10^3~\nicefrac{\text{prox}}{\text{sec}}$: $0.004$ sec (black), $0.018$ sec (red) and $0.04$ sec (blue).}
\end{figure}
\looseness-1From the results presented in Figures\ \ref{fig:angSpeed_dt}, \ref{fig:angSpeed_rho}, \ref{fig:kkt} and \ref{fig:feas}, one may conclude that sampling faster does not necessarily result in better performance of Algorithm \ref{algo:splt_trck}.~This behaviour is confirmed by Figure\ \ref{fig:par_dc_palm}.~For every computational power within $\left\{1\cdot10^3,2\cdot10^3,3\cdot10^3,4\cdot10^3\right\}$, the sampling period is made vary from ${\Delta}t=2\cdot10^{-3}$ sec to ${\Delta}t=4\cdot10^{-2}$ sec.~The tracking performance is assessed by computed the normalised $L^{2}$-norm of the difference between the full-NMPC output trajectory obtained with\ \textsc{ipopt}\ \cite{waech2006} and the output signal obtained with Algorithm\ \ref{algo:splt_trck} (at the same sampling period), on a fixed time interval between $2$ sec and $4$ sec.~More precisely, the optimality tracking error is defined by  
\begin{align}
E:=\sqrt{\frac{1}{N_s}\sum_{k=1}^{N_s}\left(y^\ast_k-\bar{y}_k\right)^2}\enspace,\nonumber
\end{align}
where $\left\{y^\ast_k\right\}$ is the system output signal obtained with\ \textsc{ipopt}, $\left\{\bar{y}_k\right\}$ is the system output signal obtained with Algorithm\ \ref{algo:splt_trck} (for the same ${\Delta}t$) and $N_s$ is the number of time samples.~For a fast sampling, the error $E$ appears to be quite large ($1\cdot10^0$), as the warm-starting point is close to the optimal solution but only few primal proxes can be evaluated, resulting in little improvement of the initial guess in terms of optimality.~This effect can even be justified further by Theorem\ \ref{th:wk_con}: as the number of primal iterations $M$ is fixed by the sampling period, the term $M^{-\psi\left(d_L,n_z\right)}$ in the expression of $\beta_w\left(\rho,M\right)$ and $\beta_s\left(\rho,M\right)$ is not small enough to dampen the effect of the term $1+\rho\lambda_G$, and thus the contraction\ \eqref{eq:wk_con} becomes looser, thus degrading the closed-loop performance.~As the sampling becomes slower, more primal proximal iterations can be carried out and subsequently, the error $E$ is reduced.~The same reasoning as before on $\beta_w\left(\rho,M\right)$ and $\beta_s\left(\rho,M\right)$ can be made.~However, if the sampling frequency $\nicefrac{1}{{\Delta}t}$ is too low, the initial guess is very far from the optimal point, to the point that Assumption\ \ref{ass:prm_diff} may not be satisfied anymore, hence the error increases again.~Thus, at every computational power, an optimal sampling period is obtained.~As the computation power increases, the optimal ${\Delta}t$ appears to decrease and the associated optimality tracking error $E$ drops. 
\begin{rk}
\looseness-1Note that we compare the behaviour of our parametric optimisation algorithm on NLPs of fixed dimension, no matter what the sampling period is, as the number of prediction samples has been fixed.~This means that the prediction time changes as the sampling period varies, which may have an effect on the closed-loop behaviour.~However, it is important to remember that the error $E$ is measured with respect to the closed-loop trajectory under the optimal full-NMPC control law computed at the same sampling period.
\end{rk}   
\begin{figure}[h!]
\begin{center}
\includegraphics[width=1\columnwidth]{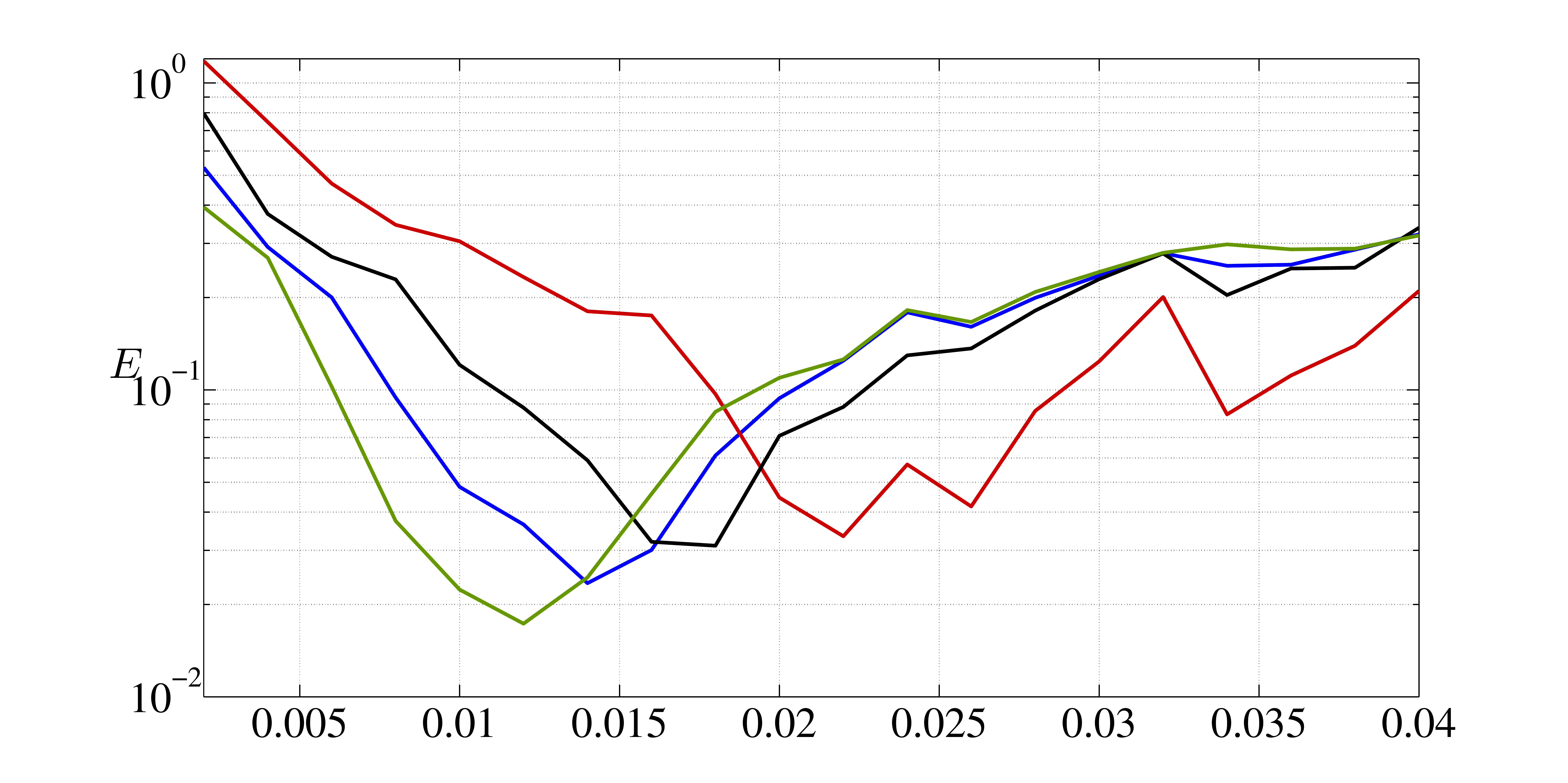}
\vspace{-0.4cm}
\put(-25,5){\footnotesize Sampling period ${\Delta}t$ (s)}
\end{center}
\caption{\label{fig:par_dc_palm}Evolution of the optimality tracking error $E$ against sampling period for different computation power: $1{\cdot}10^3$ primal iterations per sec.(red), $2{\cdot}10^3$ (black), $3{\cdot}10^3$ (blue) and $4{\cdot}10^3$ (green).}
\end{figure}
\begin{figure}[h!]
\begin{center}
\includegraphics[width=1\columnwidth]{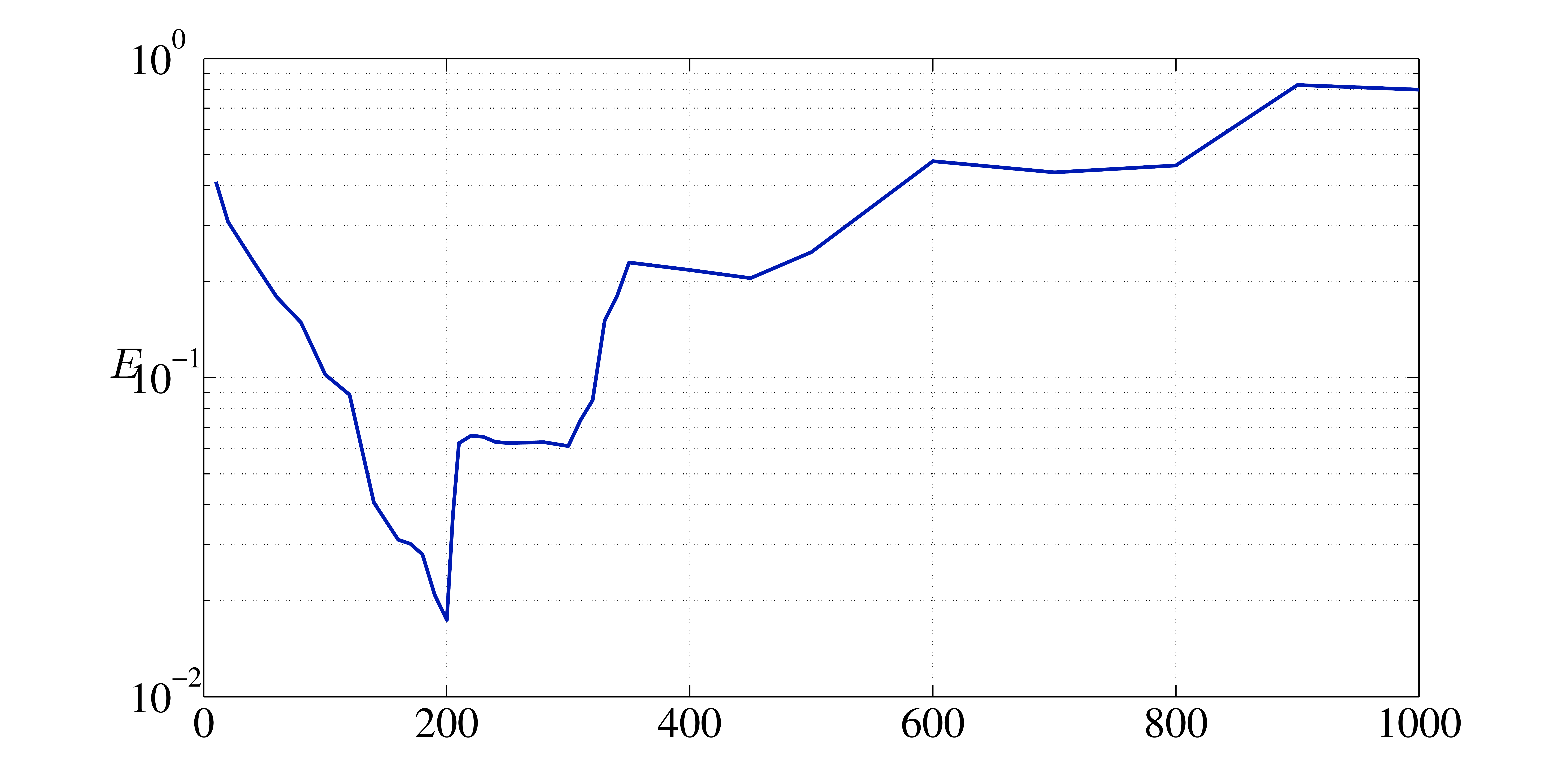}
\vspace{-0.4cm}
\put(-15,5){\footnotesize Penalty parameter $\rho$}
\end{center}
\caption{\label{fig:rho_eff}Evolution of the optimality tracking error $E$ against penalty parameter $\rho$ for $2\cdot10^3~\nicefrac{\text{prox}}{\text{sec}}$ and ${\Delta}t=0.018$ sec.}
\end{figure}
\looseness-1An interesting aspect of the non-convex splitting Algorithm\ \ref{algo:splt_trck} is that the step-size $\rho$ has an effect on the closed-loop behaviour of the nonlinear dynamics, as shown in Fig.\ \ref{fig:rho_eff}.~Given fixed sampling period and computational power, the tracking performance can be improved by tuning the optimiser step-size $\rho$.~In a sense, $\rho$ can now be interpreted as a tuning parameter for the NMPC controller.~In particular, for a fixed number of primal iterations $M$, choosing $\rho$ too large makes the numerical value of the contraction coefficients $\beta_w\left(\rho,M\right)$ and $\beta_s\left(\rho,M\right)$ blow up, subsequently degrading the tracking performance. 
\begin{figure}[h!]
\begin{center}
\includegraphics[width=1\columnwidth]{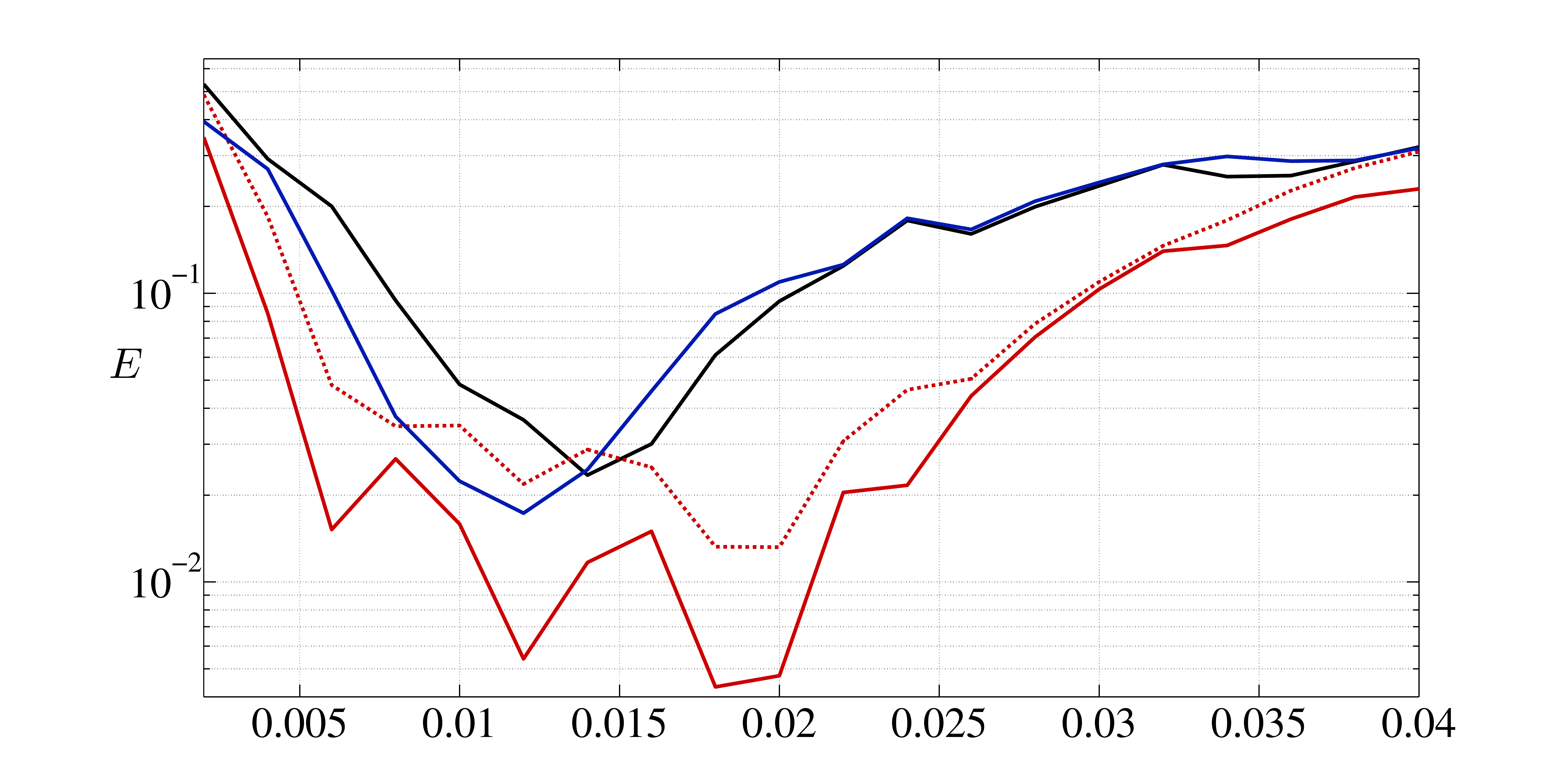}
\vspace{-0.4cm}
\put(-15,5){\footnotesize Sampling period ${\Delta}t$ (s)}
\end{center}
\caption{\label{fig:par_hom_palm}Evolution of the optimality tracking error $E$ against sampling period ${\Delta}t$.~Algorithm\ \ref{algo:splt_trck} for $3\cdot10^3~\nicefrac{\text{prox}}{\text{sec}}$ in black, for $4\cdot10^3~\nicefrac{\text{prox}}{\text{sec}}$ in blue.~Algorithm\ \ref{algo:hom_splt_trck} with $3$ homotopy steps for $3\cdot10^3~\nicefrac{\text{prox}}{\text{sec}}$ in dashed red, with $4$ homotopy steps for $4\cdot10^3~\nicefrac{\text{prox}}{\text{sec}}$ in red.}
\end{figure}

\looseness-1From the arguments developed in paragraph\ \ref{subsec:hom_con} of Section\ \ref{sec:con_prop}, one can expect Algorithm\ \ref{algo:hom_splt_trck} to track the time-dependent optima more closely than Algorithm\ \ref{algo:splt_trck}.~This is confirmed by Fig.\ \ref{fig:par_hom_palm}.  

\subsection{Collaborative tracking of unicycles}
\label{subsec:coll_track}
\looseness-1The second example is a collaborative tracking problem based on NMPC.~Three unicycles are controlled so that a leader follows a predefined path, while two followers maintain a fixed formation.~This control objective can be translated into the cost function of an NMPC problem, which is then written  
\begin{align}
\int_0^T\left\|x^{(1)}\left(t\right)-x^r\left(t\right)\right\|_{Q_1}^2&+\left\|u^{(1)}\left(t\right)\right\|_{R_1}^2+\left\|u^{(2)}\left(t\right)\right\|_{R_2}^2\nonumber\\
		 &+\left\|u^{(3)}\left(t\right)\right\|_{R_3}^2\nonumber\\
		 &+\left\|x^{(1)}\left(t\right)-x^{(2)}\left(t\right)-d_{1,2}\right\|_{Q_{1,2}}^2\nonumber\\
		 &+\left\|x^{(1)}\left(t\right)-x^{(3)}\left(t\right)-d_{1,3}\right\|_{Q_{1,3}}^2\mathrm{d}t,\nonumber
\end{align}
where $Q_1,Q_{1,2},Q_{1,3},R_1,R_2,R_3$ are positive definite matrices, $d_{1,2},d_{1,3}$ are vectors that define the formation between unicycles $1$, $2$ and $3$ and $x^r\left(\cdot\right)$ is a reference path.~All agents $1$, $2$ and $3$ follow the standard unicycle dynamics
\begin{align}
\left\{
\begin{aligned}
\dot{x}_1&=u_1\cos{x_3}\\
\dot{x}_2&=u_1\sin{x_3}\\
\dot{x}_3&=u_2
\end{aligned}
\right.\enspace,\nonumber
\end{align}
subject to input constraints 
\begin{align}
\nonumber
u_1\in\left[0,0.5\right]\enspace,~~~u_2\in\left[-\displaystyle\frac{\pi}{2},\displaystyle\frac{\pi}{2}\right]\enspace.
\end{align}
\begin{figure}[h!]
\begin{center}
\includegraphics[scale=0.1]{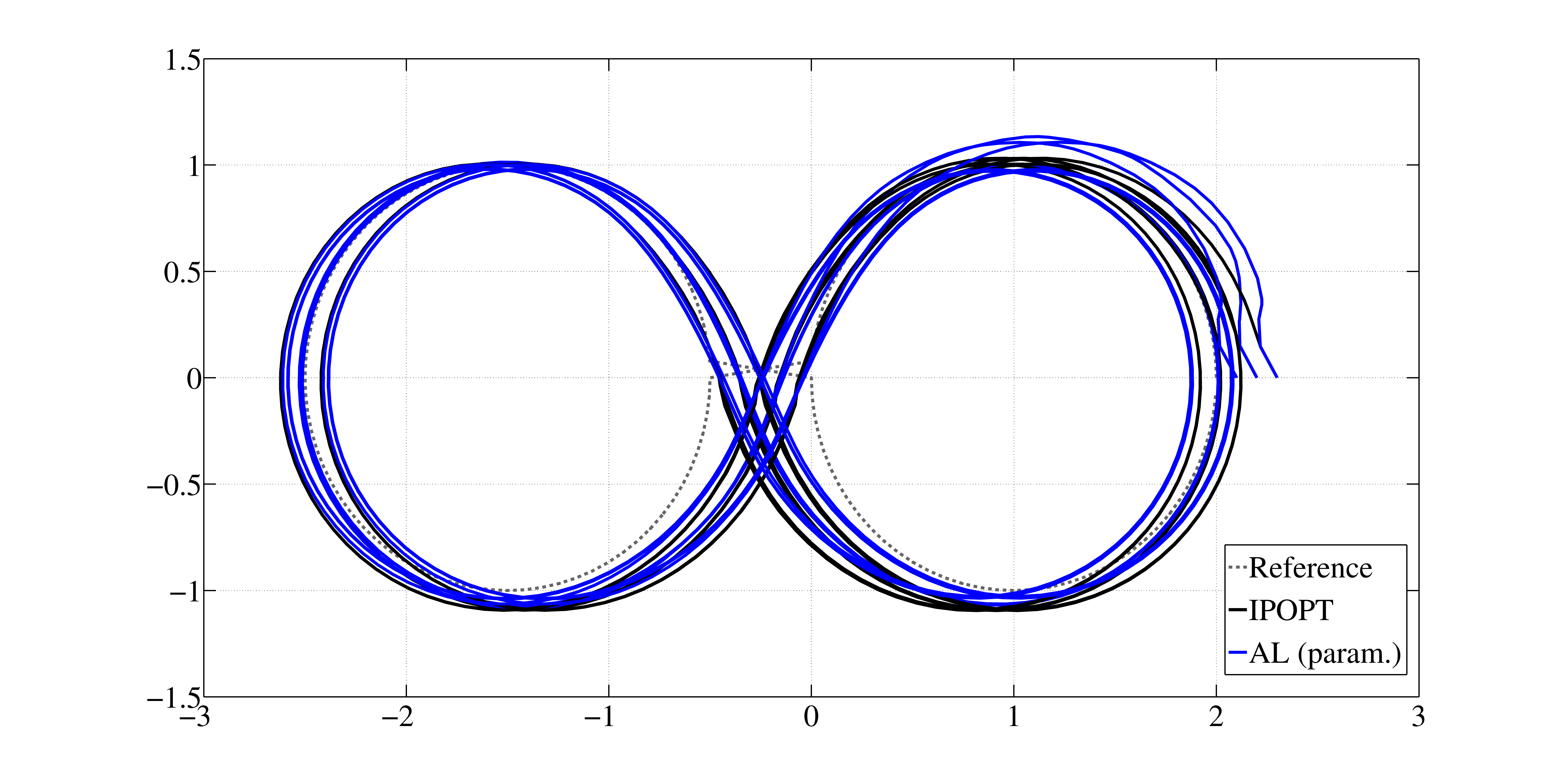}
\end{center}
\caption{\label{fig:traj_coll_track}Trajectories of the three-unicycles formation for $300~\nicefrac{\text{prox}}{\text{sec}}$, ${\Delta}t=0.35$ sec and $\rho=2\cdot10^3$.}
\end{figure}
\looseness-1The continuous-time NMPC problem is discretised using a Runge-Kutta integrator of order $4$\ \cite{hai2008}, while the closed-loop system is simulated with the\ \textsc{matlab} varying step-size integrator \texttt{ode45}.~In the resulting finite-dimensional NLP, two cost coupling terms appear between agents $1$ and $2$, and agents $1$ an $3$.~This can be addressed by the splitting Algorithm\ \ref{algo:splt_trck}.~Moreover, the whole procedure then consists in a sequence of proximal alternating steps between agent $1$  and the group $\left\{2,3\right\}$, which can compute their proximal descents in parallel without requiring any communication.~For this particular NLP with cost-couplings, the dual updates can be performed in parallel.
\begin{figure}[h!]
\begin{center}
\includegraphics[width=1\columnwidth]{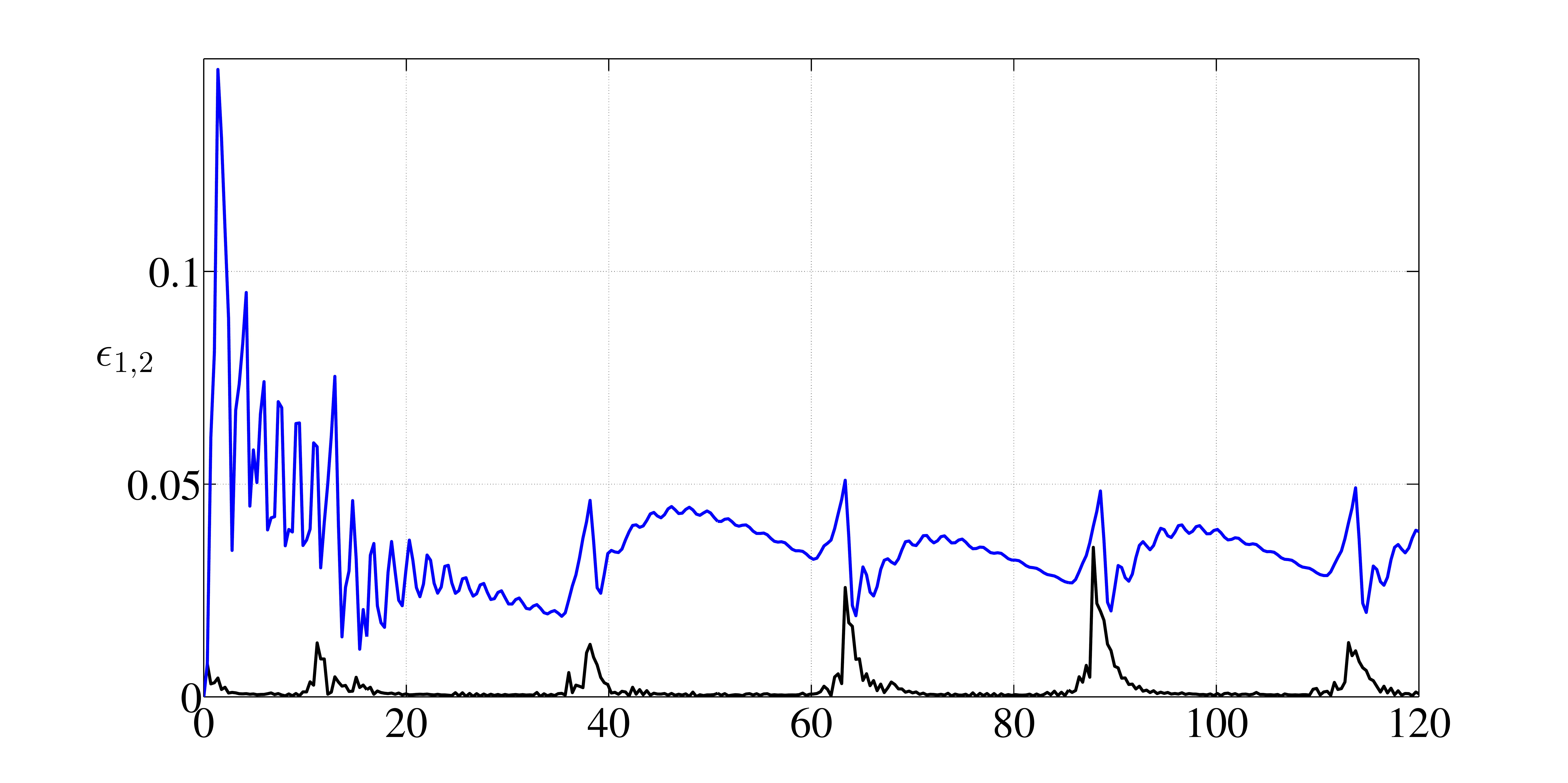}
\vspace{-0.4cm}
\put(-10,5){\footnotesize Time (s)}
\end{center}
\caption{\label{fig:form_err}Evolution of the formation error between unicycles $1$ and $2$ for Algorithm\ \ref{algo:splt_trck} (blue), compared with the formation error obtained with the full NMPC (\textsc{ipopt}, black).}
\end{figure}
\looseness-1Results of the collaborative tracking NMPC are presented in Figures\ \ref{fig:traj_coll_track} and \ref{fig:form_err}.~The number of iterations/communications per second has been fixed at $300$ and the sampling period set to ${\Delta}t=0.35$ sec.~Within the sampling period, this results in $M=105$ exchanges of packets between agent $1$ and agents $2$,$3$, which perform their computations in parallel.~The penalty parameter was $\rho=2\cdot10^3$.~The formation-keeping NMPC has been first simulated with the unicycles in closed-loop with the full-NMPC control law, computed using\ \textsc{ipopt} with accuracy $1\cdot10^{-7}$, which is purely centralised, hence not very interesting from a practical point of view, in this particular case.~The full-NMPC trajectory is plotted in black in Fig.\ \ref{fig:traj_coll_track}, while the one obtained using Algorithm \ref{algo:splt_trck} is represented in blue.~The closed-loop formation error 
\begin{align}
\epsilon_{1,2}:=\left\|x^{(1)}-x^{(2)}-d_{1,2}\right\|_2\nonumber
\end{align}
is plotted in Fig.\ \ref{fig:form_err}. At every reference change, the error rises, but decreases again as the tracking converges.~The performance could be further improved by tuning the penalty $\rho$ or performing a few homotopy steps as in Algorithm\ \ref{algo:hom_splt_trck}. 
\section{Conclusion}
\label{sec:conc}
An novel non-convex splitting Algorithm for solving NMPC programs in a real-time framework has been presented.~Contraction of the primal-dual sequence has been proven using regularity of generalised equations and recent results on non-convex descent methods.~It has been shown that the proposed Algorithm can be further improved by applying a continuation technique.~Finally, the proposed approach has been analysed in details on two numerical examples.  

\bibliographystyle{plain}	
\bibliography{biblio}	
    
\end{document}